\theoremstyle{definition}
\newtheorem{theorem}{Theorem}
\let\emph\textbf
\newaliascnt{remark}{theorem}
\newaliascnt{definition}{theorem}
\newaliascnt{proposition}{theorem}
\newaliascnt{lemma}{theorem}
\newaliascnt{corollary}{theorem}
\newaliascnt{example}{theorem}
\newaliascnt{convention}{theorem}
\newaliascnt{todo}{theorem}
\newtheorem{remark}[remark]{Remark}
\newtheorem{definition}[definition]{Definition}
\newtheorem{proposition}[proposition]{Proposition}
\newtheorem{lemma}[lemma]{Lemma}
\numberwithin{equation}{section}
\numberwithin{figure}{section}
\numberwithin{table}{section}
\let\c@table\c@figure\makeatother
\numberwithin{theorem}{section}
\numberwithin{remark}{section}
\numberwithin{definition}{section}
\numberwithin{proposition}{section}
\numberwithin{lemma}{section}
\numberwithin{corollary}{section}
\numberwithin{example}{section}
\numberwithin{convention}{section}
\numberwithin{todo}{section}
\newcommand{\id}{\mathrm{id}}
\newcommand{\Hom}{\operatorname{Hom}}
\newcommand{\cat}[1]{\mathcal{#1}}
\newcommand{\Tw}{\operatorname{Tw}}
\newcommand{\Mod}{\operatorname{Mod}}
\def\H{\operatorname{H}}
\newcommand{\HTw}{\operatorname{H}\operatorname{Tw}}
\newcommand{\isoto}{\xrightarrow{\sim}}
\newcommand{\verylongisoto}{\xrightarrow{~~~\sim~~~}}
\newcommand{\verylongmapsto}{\xmapsto{~~~\phantom{\sim}~~~}}
\newcommand{\running}{~|~}
\newcommand{\Ext}{\operatorname{Ext}}
\newcommand{\Rep}{\operatorname{Rep}}
\newcommand{\qQ}{\overline{Q}}
\newcommand{\Lie}{\operatorname{Lie}}
\newcommand{\GL}{\operatorname{GL}}
\newcommand{\M}{\mathcal{M}}
\newcommand{\quot}{\sslash}
\newcommand{\simp}{\mathrm{s}}
\newcommand{\soc}{\operatorname{soc}}
\newcommand{\Aut}{\operatorname{Aut}}
\newcommand{\reg}{\mathsf{reg}}
\newcommand{\HP}{\operatorname{HP}}
\title{Calabi-Yau techniques for Namikawa-Weyl groups}
\author{Jasper van de Kreeke}
\DeclareRobustCommand{\SkipTocEntry}[5]{}
\begin{document}

\begin{abstract}
The field of symplectic singularities aims to build a 21st century Lie theory. A key development is the Namikawa-Weyl group, which generalizes the classical Weyl group of Lie algebras. Another cornerstone is the integration of categorical Calabi-Yau techniques, capturing the rich algebraic structure of these singularities. In this paper, we develop a systematic strategy to determine Namikawa-Weyl groups for representation spaces of Calabi-Yau-2 algebras, leveraging local-to-global functors, symmetry analysis, and $ A_∞ $-methods. Applying this approach to quiver varieties, we carry out the detailed calculations and recover Yaochen Wu's result.
\end{abstract}

\maketitle

\tableofcontents

\section{Introduction}
Symplectic singularities are singular algebraic varieties together with a symplectic form. In 2008, Namikawa proved that the Poisson deformation theory of a symplectic singularity $ (X, ω) $ is related to the Poisson deformation theory of a symplectic resolution $ (Y, ω) $. The correspondence is characterized by a Weyl group, now known as the Namikawa-Weyl group:
\begin{center}
\begin{tikzpicture}
\path (0, 0) node (A) {Poisson deformations of $ Y $};
\path (8, 0) node (B) {Poisson deformations of $ X $};
\path[draw, <->] ($ (A.east) + (right:0.5) $) to node[midway, above] {Weyl group} ($ (B.west) + (left:0.5) $);
\end{tikzpicture}
\end{center}
In the present paper, we determine the Namikawa-Weyl groups for those quiver varieties $ \M(Q, α) $ that have a symplectic resolution.

Symplectic singularities were introduced by Beauville \cite{Beauville} in 1999 with the aim of unifying the algebraic concept of Goreinstein singularities with the geometric concept of symplectic manifolds. Driven by the mantra of deformation theory that deforming a singular object produces a smooth object, Ginzburg and Namikawa \cite{namikawa-I, namikawa-II} investigated the Poisson deformation theory of symplectic singularities during the years of 2005–08. As it turned out, the base of the universal Poisson deformation of a symplectic singularity $ X $ and the base of the universal Poisson deformation of a symplectic resolution $ Y $ are connected by a ramified Galois covering. The covering's Galois group resembles the classical Weyl groups and is now known as the \emph{Namikawa-Weyl group} $ W $ of $ X $:
\begin{center}
\begin{tikzcd}
\mathcal{X} \arrow[r] \arrow[d] & \mathcal{Y} \arrow[d] \\
ℂ^n \arrow[r, ":W"] & ℂ^n.
\end{tikzcd}
\end{center}
The past 25 years were shaped by the question: Which systems in algebraic geometry and string theory can be modeled by symplectic singularities, and what are their Namikawa-Weyl groups? The results now include a full description of the Namikawa-Weyl in case $ X $ is a nilpotent coadjoint orbit \cite{namikawa-II} or an affine quotient \cite{bellamy-cm}.

Quiver varieties $ X = \M(Q, α) $ were introduced by Nakajima \cite{Nakajima} in 1994 and form another class of symplectic singularities. Among them are the Kleinian singularities, which are the most classical family of two-dimensional singularities:
\begin{center}
\begin{tikzpicture}
\path (0, 0) node[align=center] (A) {$ ℂ^2 \sslash Γ $ \\ \textbf{Kleinian singularities}};
\path (5, 0) node[align=center] (B) {$ \M(Q, α) $ \\ \textbf{Quiver varieties}};
\path (10, 0) node[align=center] (C) {$ (X, ω) $ \\ \textbf{Symplectic singularities}};
\path ($ (A.east)!0.5!(B.west) $) node {$ ⊂ $};
\path ($ (B.east)!0.5!(C.west) $) node {$ ⊂ $};
\end{tikzpicture}
\end{center}
Physicists are already vitally calculating Namikawa-Weyl groups of quiver varieties by means of heuristic methods and the hypothetical analogy with Coulomb branches. The aim of this paper is to put the description of the Namikawa-Weyl group on a rigorous footing. By Namikawa's semi-explicit procedure, we have to pick a symplectic resolution $ π: Y → X $ and then have three options:
{\renewcommand{\theenumi}{\alph{enumi}}
\renewcommand{\labelenumi}{(\theenumi)}
\begin{enumerate}
\item Construct universal deformations of $ Y $ and $ X $ and compare.
\item Determine the monodromy of $ π $ above every codimension-2 leaf.
\item Compare the deformation theory with variation of GIT.
\end{enumerate}}
\noindent The first approach was pursued successfully by Yaochen Wu \cite{Wu} in 2021. The second approach was pursued by the author in 2018 and is completed in the present paper. The third approach will soon be completed by Bellamy and Schedler. All three approaches yield the same result, but offer different potential for generalization to other symplectic singularities.

\addtocontents{toc}{\SkipTocEntry}
\subsection*{Strategy}
Our strategy for determining the Namikawa-Weyl groups is to explicitly identify the entire fiber bundle $ π: π^{-1} (L) → L $ for every codimension-2 leaf $ L $ of $ \M(Q, α) $. The implementation is best sketched as follows. For every point $ x = S_1 ⊕ … ⊕ S_k $ in $ L $, there is an astonishing similarity between the vertex representations $ \tilde S_1, …, \tilde S_k $ of the local quiver $ (Q', α') $ and the simples $ S_1, …, S_k $ of $ (Q, α) $. We bundle this similarity into an $ A_∞ $-functor $ F $ which allows us to transform representations in the exceptional locus of Kleinian singularities into representations lying in $ π^{-1} (L) $. This strategy provides an explicit identification of the bundle $ π: π^{-1} (L) → L $ and therefore an explicit description of the Namikawa-Weyl group of $ \M(Q, α) $.

In \autoref{sec:prelim}, we recapitulate preliminaries on symplectic singularities, quivers and $ A_∞ $-structures. In \autoref{sec:main-functor}, we construct the desired functor $ F $. In \autoref{sec:symmetry}, we investigate the symmetry group of the local quiver. In \autoref{sec:excfiber}, we identify the action of the symmetry group on the exceptional fibers of partial resolutions of Kleinian singularities. In \autoref{sec:monodromy}, we identify the fiber bundle explicitly and read off its monodromy. In \autoref{sec:main-result}, we collect the main results and provide an outlook on the the analogous situation with other Calabi-Yau-2 algebras. The appendices contain examples and several deferred proofs.

\addtocontents{toc}{\SkipTocEntry}
\subsection*{Acknowledgements}
This paper is a revamped version of the author's master's thesis prepared under supervision of Eric Opdam and Raf Bocklandt in 2017/18. The author is grateful to Travis Schedler for help with completing the paper in 2024, and to Imperial College London for hospitality.

\section{Preliminaries}
\label{sec:prelim}
In this section, we recall preliminaries on quiver varieties, symplectic singularities, and $ A_∞ $-categories.

\addtocontents{toc}{\SkipTocEntry}
\subsection{Symplectic singularities}
We start by recalling the notion of symplectic singularities and their Namikawa-Weyl groups, following \cite{namikawa-II}. A \emph{symplectic singularity} is an irreducible normal variety, equipped with a non-degenerate closed algebraic 2-form $ ω $ on the smooth open locus $ X^{\reg} ⊂ X $, such that for any resolution $ π: Y → X $ with $ π: π^{-1} (X^{\reg}) → X^{\reg} $ bijective, the pullback form $ π^* ω $ extends to a regular algebraic 2-form on $ Y $.

Let $ X $ be an affine symplectic singularity equipped with an algebraic $ ℂ^× $-action with a unique fixed point $ 0 ∈ X $. The action has \emph{positive weights} if the weight spaces $ ℂ[X]_i \coloneqq \{f ∈ ℂ[X] \mid c{.}f = c^i f\} $ are all zero for $ i < 0 $ and we have $ ℂ[X]_0 = ℂ $. The symplectic form $ ω $ is \emph{weighted of degree $ l ∈ ℕ $} with respect to the $ ℂ^* $-action if under the automorphism $ φ_t: X → X $, $ x ↦ tx $ with $ t ∈ ℂ^* $, the form obeys $ φ_t^* ω = t^l ω $. The $ ℂ^* $-action on $ (X, ω) $ is \emph{good} if it has positive weights and $ ω $ is positively weighted.

It is useful to note that the Poisson structure on $ X^{\reg} $ extends to the whole of $ X $ by the assumption that $ X $ is normal \cite[Section 2.1]{kaledin-stratification}. This allows for the study of deformations of the Poisson structure, resulting in the following theorem of Namikawa:

\begin{theorem}[\cite{namikawa-II}]
Let $ (X, ω) $ be an affine symplectic variety with a good $ ℂ^× $-action. Let $ Y → X $ be a symplectic resolution. Then $ X $ and $ Y $ admit universal Poisson deformations $ \mathcal{X} → \HP^2 (X) $ and $ \mathcal{Y} → \HP^2 (Y) $. Moreover, we have $ \dim HP^2 (X) = \dim \HP^2 (Y) $ and there is a morphism of schemes connecting the universal Poisson deformations:
\begin{equation*}
\label{eq:prelim-namikawa-diagram}
\begin{tikzcd}
\mathcal{Y} \arrow[d] \arrow[r] & \mathcal{X} \arrow[d] \\
\HP^2 (Y) \arrow[r, "ψ"] & \HP^2 (X)
\end{tikzcd}
\end{equation*}
We have $ ψ(0) = 0 $ and the morphism $ \mathcal{Y} → \mathcal{X} $ restricts to the symplectic resolution $ Y → X $ on this fiber. The map $ ψ $ is a Galois covering between two sufficiently small punctured neighborhoods of zero. The Galois group does not depend on the choice of $ Y $.
\end{theorem}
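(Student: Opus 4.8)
The plan is to construct the two universal Poisson deformations independently, to produce the comparison morphism $ ψ $ by pushing forward deformations along the resolution, and then to analyze $ ψ $ locally along the codimension-two strata of $ X $. For the resolution $ Y $, which is smooth and holomorphic symplectic, Poisson deformations coincide with deformations of the holomorphic symplectic form. I would invoke a Bogomolov–Tian–Todorov type unobstructedness argument, in the form adapted to the symplectic setting by Kaledin–Verbitsky, to conclude that the Poisson deformation functor of $ Y $ is smooth with tangent space $ \HP^2(Y) \cong \H^2(Y, ℂ) $; a period-map argument then upgrades the formal universal deformation to an actual family $ \mathcal{Y} → \HP^2(Y) $ over the smooth base.

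For $ X $ the first task is to show that the Poisson deformation functor $ \operatorname{PD}_X $ is unobstructed and admits a hull. Here the good $ ℂ^× $-action is indispensable: the positive weights make $ ℂ[X] $ nonnegatively graded with $ ℂ[X]_0 = ℂ $, so the Poisson cohomology inherits a grading concentrated in positive degrees, and the formal universal deformation converges to an algebraic family over the affine space $ \HP^2(X) $. Unobstructedness I would deduce from the vanishing of the relevant obstruction classes, using that symplectic singularities are rational Gorenstein together with a $ T^1 $-lifting argument; this produces $ \mathcal{X} → \HP^2(X) $ and simultaneously forces the miniversal family to be universal, since the grading rules out nontrivial automorphisms.

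The comparison morphism arises because a Poisson deformation of $ Y $ pushes forward, via relative affinization (Stein factorization) of $ \mathcal{Y} $ over its base, to a Poisson deformation of $ X $; crepancy and semismallness of $ π $ guarantee that this operation is well behaved and recovers $ π: Y → X $ on the central fiber, so by universality of $ \mathcal{X} $ it is classified by a map $ ψ: \HP^2(Y) → \HP^2(X) $ with $ ψ(0) = 0 $. The dimension equality then reduces to the identifications $ \HP^2(Y) \cong \H^2(Y, ℂ) $ and $ \HP^2(X) \cong \H^2(Y, ℂ) $; the second is the nontrivial one, and I would establish it by a local-cohomology computation along $ π $, using Grauert–Riemenschneider vanishing together with the fact that the fibers of a symplectic resolution are at most half-dimensional to match Poisson-cohomology classes on $ X $ with de Rham classes on $ Y $.

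The heart of the theorem, and the step I expect to be the main obstacle, is showing that $ ψ $ is a Galois covering and that its group is the intrinsic Namikawa–Weyl group. My strategy is local-to-global. As $ ψ $ is a finite morphism between smooth bases of equal dimension, it suffices to control its branch locus, and this is governed by the codimension-two symplectic leaves of $ X $: a transverse slice to such a leaf is a Kleinian (du Val) singularity of some ADE type. Over each leaf, Brieskorn–Slodowy simultaneous-resolution theory identifies the local model of $ ψ $ with the Weyl-group quotient $ \mathfrak{h} → \mathfrak{h}/W_{\mathrm{ADE}} $ of the semiuniversal deformation of the slice. I would then globalize by studying the monodromy of the local system $ R^2 π_* ℂ $ over the union of these leaves, which assembles the local reflection groups into a single finite group $ W $ acting on $ \HP^2(Y) \cong \H^2(Y, ℂ) $, and identify $ ψ $ with the quotient $ \HP^2(Y) → \HP^2(Y)/W = \HP^2(X) $. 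Independence of the resolution is then automatic, since $ W $ is characterized purely through $ X $ — the slice types along its codimension-two leaves and the monodromy of the associated local systems — with no reference to the chosen $ Y $. The delicate point throughout is making this gluing rigorous: controlling how the local Weyl groups interact globally and verifying that the resulting $ W $ acts as a genuine reflection group whose quotient is exactly $ ψ $, which is where the bulk of the analysis in \cite{namikawa-II} is concentrated.
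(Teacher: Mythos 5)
This statement is quoted from \cite{namikawa-II} and the paper supplies no proof of it, so there is nothing internal to compare your argument against; I can only measure it against Namikawa's published argument, which it is evidently trying to reconstruct.

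As a high-level outline your proposal is faithful to that argument: unobstructedness of the Poisson deformation functors via $T^1$-lifting, the role of the good $ℂ^×$-action in algebraizing the formal universal deformations over affine bases, the construction of $ψ$ by affinization of a Poisson deformation of $Y$, and the local analysis along codimension-two leaves via Brieskorn--Slodowy simultaneous resolution of the transverse Kleinian slices, glued by the monodromy of the leaves --- these are exactly the ingredients Namikawa uses, and the independence of $W$ from the choice of $Y$ does come out intrinsically as you say. However, as a proof rather than a roadmap it has gaps precisely where the substance lies. First, you assert ``as $ψ$ is a finite morphism between smooth bases of equal dimension'' as if it were given; finiteness of $ψ$ is itself one of the main theorems (Namikawa establishes it using the $ℂ^×$-equivariance and a properness argument for the period map), and assuming it trivializes the covering statement. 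Second, the identification $\dim \HP^2(X) = \dim \H^2(Y, ℂ)$ is not a routine Grauert--Riemenschneider computation: Namikawa derives it from an exact sequence relating $\HP^2(X)$ to $\H^2(Y,ℂ)$ and correction terms supported on the codimension-two leaves, and it is this sequence that later lets the local Weyl groups act on the right space. Third, the ``delicate point'' you flag at the end --- assembling the local reflection groups $\tilde W_B$ into a global $W$ whose quotient is exactly $ψ$, rather than merely a group receiving a map from the deck transformations --- is the heart of \cite{namikawa-II} and cannot be waved through; it requires comparing the universal property of $\mathcal{X}$ with the $W$-quotient of $\mathcal{Y}$, not just matching branch loci. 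So the architecture is right, but the three steps above are named rather than proven, and each is a genuine theorem.
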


The Galois covering group $ W $ is nowadays called the \emph{Namikawa-Weyl group}. This group can be described semi-explicitly as follows. By Theorem \cite[Theorem 2.3]{kaledin-stratification}, the symplectic variety $ X $ is stratified into finitely many smooth connected leaves, on which the Poisson structure becomes symplectic. This stratification is the algebraic analog of the stratification of a Poisson manifold in real-valued geometry. If $ x $ lies in a stratum $ B $, then locally analytically, $ X $ decomposes into a product of $ B $ and a “transversal” symplectic variety $ Z_x $.

Let $ B $ be a stratum in $ X $ of codimension $ 2 $. Then the transversal symplectic variety is of dimension $ 2 $ and hence locally a Kleinian singularity $ S $. This means that we have $ (X, x) \cong (B, x) × (S, 0) $. As the stratum is connected, the type of the transversal Kleinian singularity is equal on the entire stratum. Let $ n $ be the index of the Kleinian singularity. The symplectic resolution $ π: Y → X $ gives a germ of a minimal resolution $ π: π^{-1} (\{x\} × (S, 0)) → \{x\} × (S, 0) $ of the Kleinian singularity $ S $. This means that over each point $ x ∈ B $, there lie $ n + 1 $ projective line, intersecting according to the Dynkin diagram of $ S $. If we denote this shape by $ Δ $, the map $ π: π^{-1} (B) → B $ gives a $ Δ $-fiber bundle. Locally at a point $ x ∈ B $, in the $ D_4 $ case the fiber $ π^{-1} (B) $ looks like
\begin{equation*}
\begin{tikzpicture}
\draw (0, 0) circle (0.5) +(135:0.5) coordinate(x1) +(315:0.5) coordinate(x2);
\draw (1, 0) circle (0.5) +(135:0.5) coordinate(y1) +(315:0.5) coordinate(y2);
\node (a) at ({1 + 1/sqrt(2)}, {1/sqrt(2)}) {};
\node (b) at ({1 + 1/sqrt(2)}, {-1/sqrt(2)}) {};
\draw (a) circle (0.5) +(135:0.5) coordinate (c1) +(315:0.5) coordinate (c2);
\draw (b) circle (0.5) +(135:0.5) coordinate (d1) +(315:0.5) coordinate (d2);
\draw (c1) to[out=45, in=225] +(1, 1);
\draw (c2) to[out=45, in=225] +(1, 1);
\draw (d1) to[out=45, in=225] +(1, 1);
\draw (d2) to[out=45, in=225] +(1, 1);
\draw (x1) to[out=45, in=225] +(1, 1);
\draw (y1) to[out=45, in=225] +(1, 1);
\draw[fill=black] (0.5, 0) circle (0.06cm);
\draw[fill=black] ({1 + 0.5/sqrt(2)}, {0.5/sqrt(2)}) circle (0.06cm);
\draw[fill=black] ({1 + 0.5/sqrt(2)}, {-0.5/sqrt(2)}) circle (0.06cm);
\end{tikzpicture}
\end{equation*}
but globally on $ B $, some projective lines might be glued together, forming “tubes”.
\begin{equation*}
\begin{tikzpicture}
\draw (0, 0) circle (0.5);
\draw (1, 0) circle (0.5);
\node (a) at ({1 + 1/sqrt(2)}, {1/sqrt(2)}) {};
\node (b) at ({1 + 1/sqrt(2)}, {-1/sqrt(2)}) {};
\draw (a) circle (0.5) +(135:0.5) coordinate (c1) +(315:0.5) coordinate (c2);
\draw (b) circle (0.5) +(135:0.5) coordinate (d1) +(315:0.5) coordinate (d2);
\draw (c1) to[out=45, in=90] (4, 0) to[out=270, in=0] (1.5, -1.6) to[out=180, in=225] (d1);
\draw (c2) to[out=45, in=90] (3.6, 0) to[out=270, in=225] (d2);
\draw[fill=black] (0.5, 0) circle (0.06cm);
\draw[fill=black] ({1 + 0.5/sqrt(2)}, {0.5/sqrt(2)}) circle (0.06cm);
\draw[fill=black] ({1 + 0.5/sqrt(2)}, {-0.5/sqrt(2)}) circle (0.06cm);
\end{tikzpicture}
\end{equation*}
Let us consider the set of tubes, or more formally, the irreducible exceptional divisors. At $ x $, several projective lines may be connected by tubes. Because intersection points of projective lines are preserved along $ B $, there is a Dynkin graph automorphism $ φ $ that interchanges projective lines (nodes of the Dynkin diagram) that are glued together. Let $ W_B $ be the classical Weyl group associated to $ S $. The group $ W_B $ acts on the nodes of the Dynkin diagram by permutation. Define
\begin{equation*}
\tilde W_B \coloneqq \{w ∈ W \mid wφ = φw\},
\end{equation*}
the invariant part of $ W_B $ under the Dynkin automorphism. The following statement is Namikawa's semi-explicit description of the Galois group $ W $.

\begin{theorem}
\label{th:namikawa-weyl-explicit}
Let $ (X, ω) $ be an affine symplectic variety with a good $ ℂ^× $-action. Then, using the above notation, we have
\begin{equation*}
W = \prod_{B ∈ \mathcal{B}} \tilde W_B,
\end{equation*}
where $ \mathcal{B} $ denotes the set of symplectic codimension-2 strata (\cite[Theorem 1.1]{namikawa-II}).
\end{theorem}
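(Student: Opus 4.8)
The plan is to reduce the global computation of $W$ to a local analysis at each codimension-2 stratum and then to reassemble the local data into the asserted product. By the first theorem above, $ψ \colon \HP^2(Y) → \HP^2(X)$ is a Galois covering with group $W$ between germs of smooth spaces, and $\dim \HP^2(Y) = \dim \HP^2(X)$. First I would record the ``smooth side'': for the symplectic resolution $Y$ one has $\HP^2(Y) \cong \H^2(Y, ℂ)$, and the universal Poisson deformation of $Y$ is the graded, twistor-type family over this vector space. Because the good $ℂ^×$-action makes the whole picture equivariant, the covering $ψ$ is graded; this makes $\HP^2(Y)$ a genuine vector space on which $W$ acts linearly, so that $\HP^2(X) = \HP^2(Y)/W$ as germs of cones.

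Since $\HP^2(X)$ is smooth (it is a vector space) and equals the quotient $\HP^2(Y)/W$, the Chevalley–Shephard–Todd theorem forces $W$ to be generated by complex reflections; in our setting these are honest order-two reflections, since they arise from Picard–Lefschetz monodromy of $(-2)$-curves. The theorem is thus equivalent to identifying the reflection hyperplanes in $\HP^2(Y)$ together with the group they generate.

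The heart of the argument is the local model at a stratum $B ∈ \mathcal B$. Restricting the two families to a transverse slice $\{x\} × (S, 0)$, with $S$ the transverse Kleinian singularity, I would invoke the Brieskorn–Grothendieck–Slodowy theory: the semiuniversal deformation of $S$ has base $\mathfrak{h}_B / W_B$ and admits a simultaneous resolution over the Cartan subalgebra $\mathfrak{h}_B$ of the ADE type of $S$, with $\mathfrak{h}_B → \mathfrak{h}_B/W_B$ the Weyl quotient. This is precisely the local shape of $ψ$ near $B$. However, parallel transport of the exceptional curves around $B$ realizes the Dynkin graph automorphism $φ$ of the tube-gluing, so the globally defined family over $B$ is the $φ$-folded version; accordingly, the reflections that survive globally are exactly the $φ$-invariant ones, generating $\tilde W_B = \{w ∈ W_B \mid wφ = φw\}$ rather than all of $W_B$. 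This produces, for each $B$, an embedding $\tilde W_B \embeds W$ together with its packet of reflection hyperplanes inside $\HP^2(Y)$.

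Finally I would assemble the product. Using Namikawa's local-to-global description of $\HP^2(X)$ — normality together with a Hartogs/purity argument remove the strata of codimension $\geq 4$, reducing $\HP^2(X)$ to contributions supported on $X^{\reg}$ and on the codimension-2 strata — I would show that every reflection hyperplane of $\HP^2(Y)$ comes from exactly one $B ∈ \mathcal B$, and that hyperplanes attached to distinct strata are mutually orthogonal, so the associated reflections commute and generate the direct product. Combined with the dimension count $\dim \HP^2(X) = \dim \HP^2(Y)$, this yields $W = \prod_{B ∈ \mathcal B} \tilde W_B$. The main obstacle I anticipate is this last step: establishing that the codimension-2 strata account for \emph{every} reflection and that their contributions are genuinely independent requires controlling $\HP^2(X)$ globally rather than slice by slice. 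The folding step — extracting $\tilde W_B$ and not $W_B$ from the transverse monodromy — is the other delicate point, and is exactly where the geometry of the tubes along $B$ must be pinned down.
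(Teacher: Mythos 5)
This is one of the statements the paper does not prove at all: it is recalled verbatim as background and attributed to \cite[Theorem 1.1]{namikawa-II}, so there is no in-paper argument to compare against. Judged on its own terms, your sketch is a faithful reconstruction of the strategy of Namikawa's actual proof: identify $\HP^2(Y)$ with $\H^2(Y,\mathbb{C})$, use the good $\mathbb{C}^\times$-action to linearize the germ so that $\HP^2(X)=\HP^2(Y)/W$ as a quotient of vector spaces, model the situation transverse to each codimension-2 stratum by the Brieskorn--Grothendieck--Slodowy simultaneous resolution of the Kleinian slice, fold by the monodromy automorphism $\varphi$, and assemble. The observation that smoothness of $\HP^2(X)$ plus Chevalley--Shephard--Todd forces $W$ to be a reflection group is a legitimate and clarifying addition, although it presupposes the unobstructedness of Poisson deformations of $X$ (so that $\HP^2(X)$ really is an affine space), which is itself a substantial theorem of Namikawa rather than a freebie.

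That said, the two points you flag as ``anticipated obstacles'' are not minor polish --- they are where essentially all of the content of Namikawa's proof lives, so as written the proposal is an outline rather than a proof. Concretely: (i) the folding step silently uses the identity $(\mathfrak{h}_B/W_B)^{\varphi}=\mathfrak{h}_B^{\varphi}/\tilde W_B$, a nontrivial fact about Weyl groups and diagram automorphisms that must be proved, not just asserted; and (ii) the assembly step is not really about reflection hyperplanes being ``mutually orthogonal'' --- the correct mechanism is the Leray spectral sequence decomposition $\H^2(Y,\mathbb{C})\cong \H^2(X^{\reg},\mathbb{C})\oplus\bigoplus_B \H^0(B,R^2\pi_*\mathbb{C})$, in which each summand $\H^0(B,R^2\pi_*\mathbb{C})\cong\mathfrak{h}_B^{\varphi}$ carries its own $\tilde W_B$-action while $W$ acts trivially on the $\H^2(X^{\reg},\mathbb{C})$ factor; the vanishing of contributions from strata of codimension $\geq 4$ is a separate semismallness/purity input. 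Without these two ingredients the product formula does not follow, so the proposal should be regarded as a correct road map with the hard middle left open.
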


Namikawa's proof shows that $ W $ is independent of the choice of $ Y $. Even better, the monodromy action of a loop $ γ $ in a codimension-2 stratum $ B $ can be read off purely from the transversal neighborhood of the stratum in $ X $, independent of $ Y $. The preimage of $ B $ in $ Y $ serves merely as a magnifying glass for tracing the monodromy action on the Kleinian singularity (namely, the monodromy action on the Kleinian singularity above a point is by symplectic automorphism, see Section 1.7 of \cite{namikawa-I}). This means that to compute the Dynkin automorphisms of all strata, we can choose a new symplectic resolution $ Y → X $ for each individual stratum. Note that $ W $ does not depend on the $ ℂ^× $-action either.

\addtocontents{toc}{\SkipTocEntry}
\subsection{Quiver varieties}
Next, let us recapitulate basic properies of quivers, quiver varieties and preprojective algebras. During the entire paper, we work with algebras and representations over the field of complex numbers. Let us start by recalling quivers and their representations.

A \emph{quiver} $ Q $ is a finite directed graph. We denote by $ Q_0 $ its set of vertices and by $ Q_1 $ its set of arrows. We denote by $ h: Q_1 → Q_0 $ and $ t: Q_1 → Q_0 $ the head and tail functions. A \emph{dimension vector} on $ Q $ is an element $ α ∈ ℕ^{Q_0} $. A \emph{quiver setting} $ (Q, α) $ is a quiver $ Q $ together with a dimension vector $ α ∈ ℕ^{Q_0} $.

A \emph{representation} or \emph{module} of $ Q $ of dimension $ α $ consists of assigning to every vertex $ i ∈ Q_0 $ a vector space $ V_i $ of dimension $ α_i $ and to every arrow $ a: i → j $ a linear mapping $ ρ(a): V_i → V_j $. The \emph{path algebra} $ ℂQ $ is the algebra with basis given by the paths in $ Q $, and the product of paths $ p · q $ being the concatenation of $ p $ and $ q $ if $ h(q) = t(p) $ and zero else.

The \emph{double quiver} $ \qQ $ of $ Q $ is obtained from $ Q $ by adding for every arrow $ a: i → j $ an arrow $ a^*: j → i $ in opposite direction. The \emph{preprojective algebra} $ Π_Q $ is defined as
\begin{equation*}
Π_Q = \frac{ℂ \qQ}{\big(\sum_{a ∈ Q_1} a a^* - a^* a\big)}.
\end{equation*}

The representations of a quiver can be bundled into a vector space. When enforcing the preprojective condition, we obtain an affine variety. We fix notation as follows:
\begin{equation*}
\Rep(\qQ, α) = \bigoplus_{\substack{a ∈ \qQ_1 \\ a: i → j}} \Hom(ℂ^{α_i}, ℂ^{α_j}).
\end{equation*}

The group $ \GL_α = \prod_{i ∈ Q_0} \GL_i (ℂ) $ acts on $ \Rep(\qQ, α) $ by conjugation:
\begin{equation*}
(g.ρ)_a = g_{h(a)} ρ(a) g_{t(a)}^{-1}.
\end{equation*}
The orbits of this action are the isomorphism classes of quiver representations of $ (\qQ, α) $. Recall that an affine variety $ X $ with an algebraic action of a reductive group $ G $ comes with an associated affine quotient $ X \quot G $, whose coordinate ring is the invariant ring $ ℂ[X]^G $.

This construction gives rise to quiver varieties: Let $ (Q, α) $ be a quiver setting. The \emph{moment map} $ μ: \Rep(\qQ, α) → (\Lie\GL_α)^* ≅ \Lie\GL_α $ of the double quiver is defined by
\begin{equation*}
μ(ρ) = \bigg( \sum_{h(a) = v} ρ(a)ρ(a^*) - \sum_{t(a) = v} ρ(a^*) ρ(a) \bigg)_{v ∈ Q_0}.
\end{equation*}
The \emph{quiver variety} $ \M(Q, α) $ is the affine quotient variety $ μ^{-1} (0) \quot \GL_α $. The points in $ \M(Q, α) $ are in one-to-one correspondence with the closed $ \GL_α $-orbits in $ μ^{-1} (0) $. In other words, these are the $ \GL_α $-orbits of semisimple representations in $ μ^{-1} (0) $.

The structure of the varieties $ \M(Q, α) $ has been investigated for more than 30 years, going back to work of Nakajima, Crawley-Boevey, King, and others. We shall base our inspection on the modern treatment presented by Bellamy and Schedler \cite{bellamy-schedler}.

\begin{theorem}[{\cite[Theorem 1.2]{bellamy-schedler}}]
Let $ (Q, α) $ be a quiver setting. Then $ \M(Q, α) $ is an irreducible symplectic singularity.
\end{theorem}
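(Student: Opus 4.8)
The plan is to realize $\M(Q,\alpha)$ as a Hamiltonian reduction and then verify the three defining properties of a symplectic singularity — irreducibility, normality, and the extension of the symplectic form across an arbitrary resolution — one at a time. First I would record the symplectic geometry of the ambient space: doubling the quiver identifies $\Rep(\qQ,\alpha)$ with the cotangent space $T^*\Rep(Q,\alpha)$, where each arrow $a$ is paired with its reverse $a^*$. The resulting constant symplectic form $\omega_V$ is $\GL_\alpha$-invariant, the action is Hamiltonian, and a direct computation shows that its moment map is exactly the map $\mu$ defined above. Thus $\M(Q,\alpha)=\mu^{-1}(0)\quot\GL_\alpha$ is the affine symplectic reduction of $(\Rep(\qQ,\alpha),\omega_V)$ at $0$, and it inherits a Poisson bracket from $\omega_V$; note that the scalars $ℂ^\times\subset\GL_\alpha$ act trivially, so the effective group is $\GL_\alpha/ℂ^\times$. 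With this in hand I would invoke Beauville's criterion: a normal variety carrying a symplectic form on its smooth locus is a symplectic singularity precisely when it has rational (equivalently canonical) singularities. So it remains to establish irreducibility, normality, the symplectic form, and rationality.

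The geometric heart of all four points is the structure of the zero fibre $\mu^{-1}(0)$, which I would analyze using Crawley-Boevey's theory of representation varieties of preprojective algebras. The key inputs are that $\mu$ is flat onto its image, so that $\mu^{-1}(0)$ is a complete intersection of the expected dimension $\dim\Rep(\qQ,\alpha)-\dim\GL_\alpha+1$, together with the precise description of its irreducible components in terms of the canonical decomposition of $\alpha$ and the defect $p(\alpha)=1-\langle\alpha,\alpha\rangle$. Irreducibility of $\M(Q,\alpha)$ then follows by identifying the component of $\mu^{-1}(0)$ whose closed orbits (i.e.\ semisimple representations) dominate the quotient. For normality I would use Serre's criterion: being a complete intersection, $\mu^{-1}(0)$ is Cohen--Macaulay, hence satisfies $S_2$; the condition $R_1$ amounts to bounding the locus of non-simple representations in codimension at least $2$, which again follows from Crawley-Boevey's dimension estimates. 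Normality then descends to the affine quotient $\mu^{-1}(0)\quot\GL_\alpha$, since GIT quotients of normal affine varieties by reductive groups are normal.

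Next I would produce the symplectic form on $\M(Q,\alpha)^{\reg}$. On the open locus $\mu^{-1}(0)^{\simp}$ of simple representations the $\GL_\alpha/ℂ^\times$-action is free with closed orbits, so the Marsden--Weinstein theorem makes $\mu^{-1}(0)^{\simp}/(\GL_\alpha/ℂ^\times)$ a smooth symplectic variety; I would then check that this open set is dense in the smooth locus of $\M(Q,\alpha)$, giving the required symplectic $2$-form $\omega$ there. Finally, for rationality, Boutot's theorem transports rational singularities from $\mu^{-1}(0)$ to its GIT quotient, so it suffices to know that the complete intersection $\mu^{-1}(0)$ has rational singularities; this I would obtain from a partial resolution coming from a generic stability parameter, combined with the normality and dimension information above. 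Feeding normality, the symplectic form, and rationality into Beauville's criterion yields the theorem.

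I expect the main obstacle to be the control of $\mu^{-1}(0)$: establishing that it is a reduced normal complete intersection with the correct components and that its non-simple locus has codimension at least $2$. This is precisely the delicate, quiver-specific input — Crawley-Boevey's theorem relating $\dim\mu^{-1}(0)$ to the representation-theoretic function $p$ — after which the Poisson formalism, Boutot, and Beauville are formal. A secondary subtlety is the bookkeeping for non-generic $\alpha$, where $\mu^{-1}(0)$ need not be irreducible and one must argue that only the component dominating the semisimple locus contributes to $\M(Q,\alpha)$, so that the quotient is nonetheless irreducible.
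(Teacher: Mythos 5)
This statement is quoted from Bellamy--Schedler and the paper offers no proof of its own, so the only meaningful comparison is with the argument your sketch would actually have to deliver. The overall architecture (Hamiltonian reduction, Marsden--Weinstein on the stable locus, Boutot, Beauville/Namikawa's criterion) is the right one, but there is a genuine gap at the step you lean on hardest: you assert that $ \mu $ is flat, so that $ \mu^{-1}(0) $ is a complete intersection of the expected dimension, and you then derive Cohen--Macaulayness, normality via Serre's criterion, and rationality from that. Flatness of $ \mu $ is \emph{not} automatic for an arbitrary quiver setting: by Crawley-Boevey's work it holds precisely when $ p(\alpha) \geq p(\beta_1) + \dots + p(\beta_r) $ for every decomposition of $ \alpha $ into positive roots, and this fails in general. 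When it fails, $ \mu^{-1}(0) $ need not be a complete intersection, need not be irreducible, and need not be normal, so the entire chain $ S_2 + R_1 \Rightarrow $ normality $ \Rightarrow $ descent to the quotient collapses. Even in the good case, establishing $ R_1 $ (codimension $ \geq 2 $ bounds on the non-simple locus) and the rationality of $ \mu^{-1}(0) $ are the substantive theorems, not corollaries of the complete-intersection property.

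The missing structural idea is the reduction to the case $ \alpha \in \Sigma_{0,0} $, which is where your hypotheses actually become available (existence of simple representations, Crawley-Boevey's normality theorem, the correct dimension count $ 2p(\alpha) $). For general $ \alpha $ one first invokes the canonical decomposition $ \alpha = n_1\alpha_1 + \dots + n_k\alpha_k $ and the isomorphism $ \M(Q,\alpha) \cong S^{n_1}\M(Q,\alpha_1) \times \dots \times S^{n_k}\M(Q,\alpha_k) $ (recorded in this paper's preliminaries as a separate theorem of Bellamy--Schedler), and then uses the fact that products and symmetric products of irreducible symplectic singularities are again irreducible symplectic singularities. Your closing remark about ``the component of $ \mu^{-1}(0) $ dominating the semisimple locus'' gestures at this but does not substitute for it: irreducibility of the quotient is a statement about the decomposition of $ \M(Q,\alpha) $ itself, not about selecting a component of the fibre, and the symplectic-singularity property for the symmetric-product factors is not addressed at all by your Hamiltonian-reduction framework. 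With the reduction to $ \Sigma_{0,0} $ inserted up front, the rest of your plan tracks the known proof reasonably well.
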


\addtocontents{toc}{\SkipTocEntry}
\subsection{Local structure of quiver varieties}
Quiver varieties have an interesting local structure. The systematic treatment requires a few combinatorial notions: A (positive) \emph{root} of $ Q $ is a dimension vector $ α $ of an indecomposable representation of $ Q $. A root $ α $ is \emph{real} if there is (up to isomorphism) a unique indecomposable representation with dimension vector $ α $. A root $ α $ is \emph{imaginary} if there are (up to isomorphism) multiple indecomposable representations with dimension vector $ α $. The set of roots is denoted $ R^+ $.

The \emph{Ringel form} $ ⟨-, -⟩ $ on $ ℂ^{Q_0} $ is given by
\begin{equation*}
⟨e_i, e_j⟩ = δ_{ij} - \#(a: i → j).
\end{equation*}
Its symmetrization is the \emph{Cartan form} $ (x, y) = ⟨x, y⟩ + ⟨y, x⟩ $ which can also be seen as $ (-, -) = 2 I_{Q_0} - A $, where $ I_{Q_0} $ denotes the identity and $ A $ denotes the adjacency matrix of the oriented graph $ \qQ $. We write $ p(α) = 1 - ½ (α, α) $.

Not all positive roots are created equal. Rather, there is a set $ Σ_{0, 0} $ of “good roots” defined by
\begin{multline*}
Σ_{0, 0} = \big\{α ∈ R^+ \running p(α) > \sum_{i = 1}^r p(β_i) \\
\text{for any nontrivial decomposition } α = β_1 + … + β_r, ~ (β_i) ⊂ R^+ \big\}.
\end{multline*}
An element $ α ∈ Σ_{0, 0} $ is \emph{minimal} if it cannot be written as a nontrivial sum of other elements of $ Σ_{0, 0} $.

There are quiver settings which yield empty quiver varieties. In fact, an explicit criterion states that the quiver variety is non-empty if and only if the dimension vector lies in the $ ℕ $-span of the positive roots:

\begin{theorem}[{\cite[Theorem 4.4]{cb-geometry-moment}}]
Let $ (Q, α) $ be a quiver setting. Then $ \M(Q, α) ≠ \emptyset $ implies $ α ∈ ℕR^+ $.
\end{theorem}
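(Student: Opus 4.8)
The plan is to represent a point of $\M(Q,α)$ by an honest module, split it into simple pieces, and recognise each piece's dimension vector as a positive root. By the description recalled just above the statement, a point of $\M(Q,α)$ is a closed $\GL_α$-orbit in $μ^{-1}(0)$, that is, the isomorphism class of a semisimple representation $ρ$ of $\qQ$ of dimension $α$ with $μ(ρ) = 0$. The vanishing $μ(ρ) = 0$ is exactly the preprojective relation $\sum_a (aa^* - a^*a) = 0$ read off vertex by vertex, so $ρ$ is nothing but a semisimple module $M$ over $Π_Q$ with $\dim M = α$. Thus the hypothesis $\M(Q,α) ≠ ∅$ hands us such an $M$.

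First I would use semisimplicity to decompose $M = \bigoplus_j S_j^{⊕ m_j}$ into simple $Π_Q$-modules $S_j$, giving $α = \dim M = \sum_j m_j β_j$ with $β_j := \dim S_j$ and $m_j ∈ ℕ$. It then suffices to prove that each $β_j$ is a positive root of $Q$: granting this, $α$ is visibly a non-negative integer combination of elements of $R^+$, hence $α ∈ ℕR^+$.

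The crux — and the step I expect to cost the most — is therefore the assertion that the dimension vector of a simple $Π_Q$-module is a positive root. The engine is the symmetric Euler-form identity for the preprojective algebra: for finite-dimensional $Π_Q$-modules $A, B$ one has $\dim\Ext^1_{Π_Q}(A, B) = \dim\Hom(A,B) + \dim\Hom(B,A) - (\dim A, \dim B)$, where $(-,-)$ is the Cartan form of $Q$. The doubling of $Q$ into $\qQ$ together with the preprojective relation is exactly what replaces the non-symmetric Ringel form by its symmetrization here, and verifying this identity (it reflects the $2$-Calabi–Yau duality $\Ext^2_{Π_Q}(A,B) ≅ \Hom(B,A)^*$) is a first technical point. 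Specialising to $A = B = S_j$ and using $\End(S_j) = ℂ$ — simple modules over a $ℂ$-algebra are bricks — gives $(β_j, β_j) = 2 - \dim\Ext^1_{Π_Q}(S_j, S_j) ≤ 2$, equivalently $p(β_j) ≥ 0$.

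Finally I would upgrade the inequality $p(β_j) ≥ 0$ to the genuine statement $β_j ∈ R^+$ by invoking Kac's theorem, which identifies the positive roots with the dimension vectors of indecomposable representations of $Q$. Here lies the real difficulty: the inequality by itself does not single out a root, so one must argue that the support of $S_j$ is connected and then place $β_j$ either in a Weyl-group orbit of a simple root (the real case, $(β_j,β_j) = 2$) or in the closure of the fundamental region (the imaginary case, $(β_j,β_j) ≤ 0$). This is precisely the information encoded by the set $Σ_{0,0}$: the dimension vector of a simple $Π_Q$-module lies in $Σ_{0,0}$, which is contained in $R^+$ by definition. Assembling the steps yields $α = \sum_j m_j β_j ∈ ℕR^+$, as claimed.
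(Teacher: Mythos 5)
The paper does not prove this statement at all: it is quoted verbatim from Crawley-Boevey, so there is no internal proof to compare against. Your skeleton is nonetheless the standard route by which the cited result is obtained (pass to a semisimple representation in the closed orbit, decompose into simple $\Pi_Q$-modules, show each simple has dimension vector a positive root), and the first two steps are fine. The gap is in the third step, which carries all the weight. The Ext computation only gives $(\beta_j,\beta_j)\le 2$, i.e.\ $p(\beta_j)\ge 0$, and this is strictly weaker than $\beta_j\in R^+$: on a quiver with two non-adjacent looped vertices, $\beta=e_1+e_2$ has $p(\beta)=1$ yet is not a root. You acknowledge that one must then establish connectedness of support and run a reflection/fundamental-region argument, but you do not carry it out; the reflection functors for (deformed) preprojective algebras and the induction they enable are precisely the missing machinery, and they constitute the actual content of Crawley-Boevey's theorem. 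Your closing appeal to ``the dimension vector of a simple $\Pi_Q$-module lies in $\Sigma_{0,0}$'' is circular in this context: that is the cited result (Bellamy--Schedler Prop.~3.8, resp.\ Crawley-Boevey Thm.~1.2) whose content you set out to supply, and since $\Sigma_{0,0}\subseteq R^+$ by definition, granting it collapses the whole argument to a one-liner.

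A separate observation: as literally stated, the theorem needs none of this. Every coordinate vector $e_i$ is the dimension vector of the vertex simple of $Q$, hence a positive root, so $\mathbb{N}R^+\supseteq\mathbb{N}^{Q_0}$ and every dimension vector lies in $\mathbb{N}R^+$ unconditionally. What the paper actually uses downstream is the finer statement your argument is aiming at --- that $\alpha$ decomposes into dimension vectors of simple $\Pi_Q$-modules, i.e.\ elements of $\Sigma_{0,0}$ --- and for that finer statement the gap above is genuine.
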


Thus, for nontrivial quiver varieties $ \M(Q, α) $ we can always assume $ α ∈ ℕR^+ $.

\begin{theorem}[{\cite[Proposition 2.1, Theorem 1.3]{bellamy-schedler}}]
Let $ Q $ be a quiver and $ α ∈ ℕR^+ $. Then $ α $ admits a decomposition $ α = n_1 α_1 + … + n_k α_k $ for some $ k ∈ ℕ $ and $ 0 < n_i ∈ ℕ $ and $ α_i ∈ Σ_{0, 0} $, such that any other decomposition with these properties is a refinement. This decomposition is called the \emph{canonical decomposition} of $ α $. In terms of the canonical decomposition, we have an isomorphism of symplectic varieties
\begin{equation*}
\M(Q, α) ≅ S^{n_1} \M(Q, α_k) × … × S^{n_k} \M(Q, α_k).
\end{equation*}
\end{theorem}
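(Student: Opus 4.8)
The plan is to separate the combinatorial claim (existence and uniqueness of the canonical decomposition) from the geometric claim (the symplectic factorisation), using the quantity $ p $ as the organising invariant and Crawley-Boevey's results as the main external input: namely that $ Σ_{0,0} $ is exactly the set of dimension vectors of simple $ Π_Q $-modules in $ μ^{-1}(0) $, that such a module exists whenever $ β ∈ Σ_{0,0} $, and that $ \dim \M(Q, β) = 2p(β) $ with generic point simple in that case.

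For existence I would work inside the set of decompositions of $ α $ into positive roots, which is nonempty since $ α ∈ ℕR^+ $, and select one maximising $ \sum_i p(β_i) $. I claim each part of a maximiser lies in $ Σ_{0,0} $: if some part $ β $ violated the defining inequality, there would be a nontrivial splitting $ β = \sum_j γ_j $ with $ \sum_j p(γ_j) ≥ p(β) $, which I may substitute without lowering the total. Each substitution strictly raises the number of parts, a quantity bounded by $ \sum_v α_v $, so the process halts with all parts in $ Σ_{0,0} $; collecting equal parts yields $ α = n_1 α_1 + … + n_k α_k $ with distinct $ α_i ∈ Σ_{0,0} $. The same argument shows the maximum of $ \sum p $ over root decompositions is attained on a $ Σ_{0,0} $-decomposition, and by Crawley-Boevey this common maximum equals $ ½ \dim \M(Q, α) $.

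Uniqueness together with the refinement property is the step I expect to be delicate, and here I would pass to geometry. Every point of $ \M(Q, α) $ is a semisimple $ Π_Q $-module whose simple summands have dimension vectors in $ Σ_{0,0} $, so each representation type determines a $ Σ_{0,0} $-decomposition, and the locus of a fixed type is a stratum of dimension $ 2 \sum_i n_i p(α_i) $. Since $ \M(Q, α) $ is irreducible, there is a unique open dense stratum; its type is therefore the unique maximiser of $ \sum p $, which I identify with the canonical decomposition constructed above. For the refinement clause I would use that the stratification is governed by degeneration: a simple of dimension $ α_i $ can specialise within $ \M(Q, α) $ only to a semisimple module whose summand dimensions refine $ α_i $, and conversely every $ Σ_{0,0} $-decomposition is realised by some semisimple module, since each part carries a simple. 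Hence every competing decomposition occurs as the type of a point lying in the closure of the generic stratum and is a refinement of the canonical one; the main obstacle is precisely to rule out types incomparable to the canonical one, which this closure description does.

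Finally, for the symplectic isomorphism I would construct the comparison map explicitly. Fixing an identification $ ℂ^α = \bigoplus_i (ℂ^{α_i})^{⊕ n_i} $, the direct-sum operation on representations preserves $ μ^{-1}(0) $ and is $ \GL $-equivariant, hence descends to a morphism $ \prod_i \M(Q, α_i)^{n_i} → \M(Q, α) $; permuting isomorphic summands does not change the isomorphism class, so it factors through a morphism $ Φ: \prod_i S^{n_i} \M(Q, α_i) → \M(Q, α) $. On the open locus of pairwise non-isomorphic simples, $ Φ $ is injective by Krull-Schmidt with image the generic stratum, so $ Φ $ is birational; it is quasi-finite with $ Φ^{-1}(0) = \{0\} $, and being equivariant for the good $ ℂ^× $-actions it is finite. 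Both source and target are normal --- the target as a symplectic singularity, the source as a product of symmetric powers of normal varieties --- so Zariski's main theorem promotes $ Φ $ to an isomorphism. The symplectic forms agree on the generic locus because symplectic reduction is additive under direct sums and matches the form $ \bigoplus_i ω_{α_i} $ descended to the symmetric powers; by normality this equality of closed $ 2 $-forms extends across $ \M(Q, α) $, yielding the asserted isomorphism of symplectic varieties.
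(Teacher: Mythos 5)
This statement is quoted in the paper with a citation to Bellamy--Schedler (who in turn build on Crawley-Boevey's decomposition theorem for Marsden--Weinstein reductions); the paper gives no proof of its own, so there is nothing internal to compare your argument against. Your reconstruction follows what is essentially the standard route: a greedy/maximisation argument for the combinatorial existence, the representation-type stratification for uniqueness and the refinement property, and the direct-sum morphism made into an isomorphism by finiteness, birationality, normality and Zariski's main theorem. The overall architecture is sound and the external inputs you invoke (the characterisation of $ Σ_{0,0} $ by existence of simples, $ \dim \M(Q,β) = 2p(β) $, normality of the quiver varieties) are the same ones the literature uses.

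One step deserves more care than you give it. You assert that the stratum of a fixed representation type has dimension $ 2\sum_i n_i p(α_i) $, but the stratum attached to a type $ τ = (n_1,β_1;…;n_k,β_k) $ --- a direct sum of \emph{pairwise non-isomorphic} simples $ S_i $ with multiplicities $ n_i $ --- has dimension $ 2\sum_{i=1}^k p(β_i) $, summed over distinct simples and \emph{not} weighted by the multiplicities (this is also how the paper records it). So identifying the open dense stratum with the maximiser of the multiplicity-weighted sum $ \sum n_i p(α_i) $ requires the additional observation that in the dense stratum each imaginary $ α_i $ is realised by $ n_i $ pairwise non-isomorphic simples each occurring with multiplicity one (so the two counts coincide), whereas a real $ α_i $ contributes $ p = 0 $ either way but forces genuine multiplicity since it supports a unique simple. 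This multiplicity bookkeeping is precisely where Crawley-Boevey's proof does its work, and it also matters for your closure/refinement argument, since the type-decomposition of the generic stratum must be checked to equal the canonical decomposition as a multiset. Finally, note that your map $ Φ $ need not be visibly injective off the generic locus (regrouping simple summands among the $ n_i $ blocks can a priori identify distinct points of the symmetric product); this is harmless because ZMT only needs finite plus birational plus normal target, with injectivity falling out a posteriori, but it is worth saying explicitly rather than leaving the impression that Krull--Schmidt settles it everywhere.
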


Thus, we can focus on the case that the dimension vector $ α $ itself lies in the set $ Σ_{0, 0} $. If $ α ∈ Σ_{0, 0} $, then $ α $ is its own canonical decomposition. Bellamy and Schedler also show that any dimension vector $ α $ lies in $ Σ_{0, 0} $ if and only if there is a simple representation of $ Q $ satisfying the preprojective condition \cite[Proposition 3.8]{bellamy-schedler}. If $ α ∈ Σ_{0, 0} $, then the variety $ \M(Q, α) $ has dimension $ 2 p(α) $.

Let us now recall the description of the symplectic stratification of $ \M(Q, α) $ due to Bellamy and Schedler. In particular, we recall the construction of local quivers associated with decompositions of $ α $, and the selected class of isotropic decompositions of $ α $.

\begin{theorem}[{\cite[Corollary 3.25]{bellamy-schedler}}]
Let $ (Q, α) $ be a quiver setting and $ α = n_1 β_1 + … + n_k β_k $ be a decomposition where $ n_i $ are positive integers and $ β_1, …, β_k $ are not necessarily distinct elements of $ Σ_{0, 0} $. Write $ τ = (n_1, β_1; …; n_k, β_k) $ and define $ \M(Q, α)_τ $ as the subset of $ \M(Q, α) $ consisting of all elements with semisimple lifts $ M ∈ \Rep(\qQ, α) $ that decompose into simples according to $ τ $:
\begin{equation*}
M ≅ \bigoplus_{i = 1}^k S_i^{⊕ n_i}, \quad \text{where } S_i ∈ \Rep^{\simp} (Q, β_i) \text{ with } S_i \not\cong S_j \text{ for } i ≠ j.
\end{equation*}
The $ S_i $ are supposed to be non-isomorphic 
Then the sets $ \M(Q, α)_τ $ form the symplectic stratification for $ \M(Q, α) $.
\end{theorem}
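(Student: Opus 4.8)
The plan is to combine Luna's étale slice theorem with Crawley-Boevey's local quiver description, and then to verify that the resulting local product decomposition is compatible with the Poisson structure. First I would record that the points of $\M(Q,α)$ are in bijection with isomorphism classes of semisimple representations in $μ^{-1}(0)$, and that each such representation admits a unique decomposition $M \cong \bigoplus_{i} S_i^{⊕ n_i}$ into pairwise non-isomorphic simples. Hence the assignment $x \mapsto τ$ is well defined, there are only finitely many types $τ$ with $α = \sum_i n_i β_i$, and the sets $\M(Q,α)_τ$ partition $\M(Q,α)$. It then remains to show that each piece is smooth and locally closed, that the Poisson bivector restricts to a nondegenerate form on it, and that its connected components are precisely the symplectic leaves of Kaledin's stratification.

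The geometric heart is the local model near a point $x = \bigoplus_i S_i^{⊕ n_i}$. By Schur's lemma the stabilizer in $\GL_α$ of a semisimple lift $M$ is the reductive group $G_x \cong \prod_i \GL_{n_i}(ℂ)$, and the orbit $\GL_α \cdot M$ is closed. Luna's slice theorem then provides a $G_x$-stable affine slice $N \subset \Rep(\qQ,α)$ transverse to the orbit and an étale-local isomorphism between a neighborhood of $x$ in $\M(Q,α)$ and a neighborhood of $0$ in $N /\!/ G_x$. Next I would identify $N$ representation-theoretically: its normal directions to the orbit are governed by the $\Ext$-groups between the summands, and imposing the moment-map equation and quotienting produces exactly the quiver variety $\M(Q',α')$ of a local quiver $(Q',α')$, whose vertices are indexed by the distinct simples, whose dimension vector is $α' = (n_1,\dots,n_k)$, and whose doubled arrow count between $i$ and $j$ is $\dim\Ext^1_{Π_Q}(S_i,S_j) = 2δ_{ij} - (β_i,β_j)$. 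Since $(Q',α')$ depends only on $τ$, this yields an étale-local product decomposition of $\M(Q,α)$ into a smooth base and the transverse singularity $\M(Q',α')$, with $x$ lying over the origin $0 \in \M(Q',α')$.

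From this model the smoothness and local closedness of $\M(Q,α)_τ$ follow, because moving within the stratum means deforming the simples $S_i$ while preserving their types---exactly the base direction of the slice---whereas the transverse directions move into $\M(Q',α')$ and genuinely change the type. Concretely, $\M(Q,α)_τ$ is the configuration space of tuples of pairwise non-isomorphic simples, an open subset of a product of the smooth simple loci $\M(Q,β_i)^{\simp}$ modulo the finite group freely permuting factors of equal type; it is therefore smooth and locally closed of the expected dimension. To identify it with a symplectic leaf I would upgrade the Luna slice to a symplectic slice, so that the local product becomes a product of Poisson varieties and the bracket splits as the symplectic form of the base plus the Poisson bivector of $\M(Q',α')$. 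As $0$ is the deepest, zero-dimensional symplectic stratum of $\M(Q',α')$, the Poisson rank drops strictly upon leaving $\M(Q,α)_τ$, the bracket restricts nondegenerately along the base, and the connected components of $\M(Q,α)_τ$ are exactly the symplectic leaves.

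I expect the main obstacle to be this last compatibility step: showing that the purely GIT-theoretic Luna slice respects the Poisson and symplectic structure. The clean way is to realize the slice as a symplectic reduction performed in stages, using the $G_x$-equivariance of the moment map $μ$ together with a Marsden–Weinstein version of the implicit function theorem, so that cutting by $μ$ and quotienting by $\GL_α$ commute with restricting the symplectic form. A secondary technical point is the case of repeated dimension vectors $β_i = β_j$ with $S_i \not\cong S_j$, where one must confirm that the finite permutation group acts freely on the configuration of distinct simples, keeping the stratum and its local model smooth.
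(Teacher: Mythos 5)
This statement is quoted verbatim from Bellamy--Schedler (their Corollary 3.25) and the paper gives no proof of its own, so there is nothing internal to compare against; your outline is, however, exactly the standard argument underlying the cited result and the local structure theory the paper recalls immediately afterwards: Luna's slice at a closed orbit with reductive stabilizer $\prod_i \GL_{n_i}(ℂ)$, identification of the slice with the local (Ext-)quiver variety $\M(Q',α')$ via $\dim\Ext^1(S_i,S_j)=2δ_{ij}-(β_i,β_j)$, and the upgrade to a Poisson/symplectic slice so that the connected strata are the Kaledin leaves. The one point worth tightening is your final step: to conclude that the leaf through $x$ is no larger than $\M(Q,α)_τ$ you need that $\{0\}$ is itself a zero-dimensional leaf of $\M(Q',α')$, which follows because the stratum of $0$ in the local quiver variety is the type of the vertex simples and each $\M^{\simp}(Q',e_i)$ is a point --- this is a small bootstrapping observation rather than a gap, but as written your appeal to ``the Poisson rank drops upon leaving the stratum'' presupposes it.
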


Let us recall the local structure of quiver varieties. Let $ (Q, α) $ be a quiver setting with $ α ∈ Σ_{0, 0} $ and let $ \M(Q, α)_τ $ be a symplectic stratum. Then the (reduced) \emph{local quiver} $ (\overline{Q'}, α') $ at $ τ $ is the quiver setting defined as follows:
\begin{itemize}
\item The vertices of $ \overline{Q'} $ are the integers $ 1, …, k $.
\item There are $ 2 p(β_i) $ loops at vertex $ i $.
\item There are $ -(β_i, β_j) $ arrows from vertex $ i $ to $ j $ when $ i ≠ j $.
\end{itemize}

Our notation of local quivers deviates slightly from the one used in \cite{bellamy-schedler}, where the full local quiver is denoted $ (Q', α') $ while the reduced local quiver is denoted $ (Q'', α'') $. The only difference between the two is that $ (Q'', α'') $ has all loops at vertices removed.

We are now ready to recall the local structure of $ \M(Q, α) $. It is possible to phrase the structure theory in étale local or formal scheme theory, but we shall work in the analytic setting for simplicity. We start with a point $ x ∈ \M(Q, α)_τ $. Then $ \M(Q, α)_τ $ is a smooth symplectic analytic manifold of dimension $ \sum_{i = 1}^k 2 p(β_i) $. The transversal slice to $ \M(Q, α)_τ $ at $ x $ is isomorphic to the germ of $ \M(Q', α') $ at zero:
\begin{equation*}
(\M(Q, α), x) ≅ (ℂ^{\sum 2 p(β_i)}, 0) × (\M(Q', α'), 0).
\end{equation*}
In particular, a stratum is of codimension 2 if and only if $ \M(Q', α') $ is two-dimensional. This is the case precisely when $ (Q', α') $ is Kleinian quiver setting. We can make a few further observations. For instance, every element $ β_i $ is either a real or an imaginary root. It is known that if $ β_i $ is a real root, then $ \M(Q, β_i) $ consists of a single point. Therefore if $ \M(Q, α)_τ $ is non-empty, we necessarily have that all real roots among $ β_1, …, β_k $ are pairwise distinct. To ensure that we enumerate only the non-empty codimension-2 leaves, we fix the following terminology, following the account of Bellamy and Schedler.

\begin{definition}
Let $ (Q, α) $ be a quiver setting with $ α ∈ Σ_{0, 0} $. Then a decomposition $ α = n_1 β_1 + … + n_k β_k $ is an \emph{isotropic decomposition} if
\begin{enumerate}
\item $ β_1, …, β_k ∈ Σ_{0, 0} $.
\item The real roots among $ β_1, …, β_k $ are pairwise distinct.
\item The local quiver $ (\overline{Q''}, α'') $ is a Kleinian quiver.
\end{enumerate}
\end{definition}

\addtocontents{toc}{\SkipTocEntry}
\subsection{Resolutions of quiver varieties}

In this section, we recall the result of Bellamy and Schedler on symplectic resolutions of quiver varieties. We start from a quiver variety $ \M(Q, α) $ where $ Q $ is a quiver and $ α $ does not necessarily in $ Σ_{0, 0} $. Bellamy and Schedler provide an explicit criterion for $ \M(Q, α) $ to have a symplectic resolution. We recall the criterion and provide basic insight into the resolution itself.

\begin{theorem}[\cite{bellamy-schedler}]
Let $ (Q, α) $ be a quiver setting and $ α = n_1 α_1 + … + n_k α_k $ be its canonical decomposition. Then $ \M(Q, α) $ admits a projective symplectic resolution if and only if every vector $ α_i $ is indivisible or satisfies $ (\gcd(α_i), p(\gcd(α_i)^{-1} α_i)) = (2, 2) $.
%
\end{theorem}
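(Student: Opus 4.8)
The plan is to reduce the assertion, via the product decomposition and the transversal-slice description, to a single local model attached to each primitive root, and then to invoke the Kaledin--Lehn--Sorger classification of symplectic resolutions of moduli-type singularities. First I would pass to the factors of the canonical decomposition. By the isomorphism $ \M(Q, \alpha) \cong S^{n_1} \M(Q, \alpha_1) \times \dots \times S^{n_k} \M(Q, \alpha_k) $, together with the elementary fact that a symplectic resolution of a product is exactly a product of symplectic resolutions of the factors — forwards one multiplies the resolutions, and backwards one restricts a resolution of the product to a slice $ \{x_2\} \times \dots \times \{x_k\} $ and uses that each factor is a conical symplectic singularity — it suffices to decide when a single factor $ S^{n_i} \M(Q, \alpha_i) \cong \M(Q, n_i \alpha_i) $ is resolvable. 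Writing $ \alpha_i = d_i \beta_i $ with $ \beta_i $ primitive and $ d_i = \gcd(\alpha_i) $, this factor carries total multiplicity $ N := n_i d_i $ of $ \beta_i $, and I abbreviate $ g := p(\beta_i) $.

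Next I would localize. A preliminary bookkeeping lemma asserts that $ p(\alpha_i) \geq 2 $ forces $ n_i = 1 $: if $ \alpha_i $ occurred with multiplicity $ \geq 2 $, then $ p(\alpha_i) \geq 2 $ gives $ (\alpha_i, \alpha_i) \leq -2 $, hence $ p(2\alpha_i) = 1 - 2(\alpha_i, \alpha_i) > 2 - (\alpha_i, \alpha_i) = 2 p(\alpha_i) $, and one checks that $ 2\alpha_i \in \Sigma_{0,0} $; grouping two copies into $ 2\alpha_i $ then produces an admissible decomposition that the canonical one would have to refine, although it is strictly coarser — a contradiction. With the multiplicity under control I would apply the transversal-slice description at the stratum whose semisimple representative is $ T^{\oplus N} $ for a single simple $ T $ of dimension $ \beta_i $ (such a $ T $ exists because $ \beta_i \in \Sigma_{0,0} $). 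There the local quiver has one vertex carrying $ \dim \Ext^1(T, T) = 2g $ loops and dimension vector $ N $, so the transversal slice coincides with that of the $ g $-loop model $ \M(L_g, N) $ — the quiver variety of the $ g $-loop quiver $ L_g $ at dimension $ N $, of dimension $ 2 N^2 (g - 1) + 2 $. Since a symplectic resolution of the factor restricts to one of this slice while, conversely, the admissible cases will be resolved explicitly below, the factor is resolvable if and only if $ \M(L_g, N) $ is. A short calculation — using the multiplicity lemma, the fact that isotropic roots of $ \Sigma_{0,0} $ are primitive (so $ g = 1 $ forces $ d_i = 1 $), and that $ (g, N) = (2, 2) $ arises exactly from $ \alpha_i = 2\beta_i $ with $ p(\beta_i) = 2 $ — then matches the dichotomy ``$ g = 1 $, or $ N = 1 $, or $ (g, N) = (2, 2) $'' with the stated condition ``$ \alpha_i $ indivisible, or $ (\gcd(\alpha_i), p(\gcd(\alpha_i)^{-1} \alpha_i)) = (2, 2) $''.

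The heart of the matter is the local lemma: $ \M(L_g, N) $ admits a projective symplectic resolution if and only if $ g = 1 $, $ N = 1 $, or $ (g, N) = (2, 2) $. For the existence half I would produce resolutions directly: $ \M(L_g, 1) \cong \mathbb{C}^{2g} $ is already smooth; $ \M(L_1, N) \cong S^N \mathbb{C}^2 $ is resolved by the Hilbert scheme $ \operatorname{Hilb}^N \mathbb{C}^2 $ (and globally, for isotropic $ \beta_i $, the factor is a symmetric power of a Kleinian singularity, resolved by the Hilbert scheme of its minimal resolution); for a non-isotropic indivisible $ \alpha_i $ a generic King stability parameter $ \theta $ makes $ \theta $-semistability coincide with $ \theta $-stability, whence $ \M_\theta(Q, \alpha_i) \to \M(Q, \alpha_i) $ is smooth and therefore a symplectic resolution; and the sporadic $ 10 $-dimensional case $ (g, N) = (2, 2) $ is the O'Grady-type singularity resolved by the Kaledin--Lehn--Sorger construction. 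For the non-existence half, $ \M(L_g, N) $ is the local model of a moduli space of semistable sheaves with Mukai vector $ N v_0 $ and $ v_0^2 = 2(g - 1) $, so the Kaledin--Lehn--Sorger theorem shows that for $ g \geq 2 $, $ N \geq 2 $, $ (g, N) \neq (2, 2) $ it carries no symplectic resolution — intuitively, it contains a transversal quotient singularity whose group is not generated by symplectic reflections, such as $ \mathbb{C}^{2m}/\{\pm 1\} $ with $ 2m \geq 4 $. Since a resolution of $ \M(Q, \alpha) $ would induce one on every transversal slice, a single bad $ \alpha_i $ already obstructs resolvability.

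The hard part will be the local lemma, and specifically the two deep inputs from Kaledin--Lehn--Sorger: the construction of the sporadic O'Grady-type resolution when $ (g, N) = (2, 2) $, and the non-existence of resolutions in the remaining divisible cases. Everything around it — the product reduction, the multiplicity lemma, the identification of the transversal slice with the $ g $-loop model, and the numerical translation of $ (g, N) $ into the $ \gcd $--$ p $ condition — is comparatively routine, but it must be assembled so that all dependence on the multiplicities $ n_i $ cancels and the clean criterion ``indivisible or $ (2, 2) $'' emerges.
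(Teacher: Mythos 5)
The paper does not prove this statement at all: it is quoted verbatim from Bellamy--Schedler as a preliminary, so there is no in-paper argument to compare against. Measured against the actual proof in the cited source, your sketch reconstructs the right skeleton: reduction along the canonical decomposition to a single factor, the multiplicity lemma forcing $ n_i = 1 $ when $ p(\alpha_i) \geq 2 $ (your computation $ p(2\alpha_i) > 2p(\alpha_i) $ and the coarsening contradiction is exactly the standard argument), the identification of the worst transversal slice with the $ g $-loop model, explicit resolutions in the indivisible and isotropic cases via generic GIT and Hilbert schemes, and the O'Grady/Kaledin--Lehn--Sorger input for the $ (2,2) $ case and for non-existence.

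Two places are thinner than what is actually needed. First, the non-existence half: ``a resolution of $ \M(Q,\alpha) $ would induce one on every transversal slice'' is not a step you can take literally --- restricting a resolution over an analytic or formal slice does not hand you a projective symplectic resolution of that slice. Bellamy--Schedler instead prove directly that the offending quiver varieties are locally factorial and terminal (adapting the Kaledin--Lehn--Sorger argument to the quiver setting rather than transporting it from sheaf moduli spaces), so that any projective crepant resolution would have to be an isomorphism while the variety is singular; the local $ g $-loop model enters as the place where factoriality and terminality are verified, not as a target to which a resolution is restricted. Second, the ``elementary fact'' that resolvability of a product forces resolvability of each factor, and likewise for symmetric powers, is true but rests on the same $ \mathbb{Q} $-factorial-terminalization machinery (the terminalization of a product is the product of terminalizations), not on slicing. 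With those two repairs your outline matches the cited proof; the deep inputs you isolate --- the sporadic $ (g,N)=(2,2) $ resolution and the factorial-terminal obstruction in the remaining divisible cases --- are indeed where the real work lives.
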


The theorem demands that every $ α_i $ either be indivisible or satisfy a combinatorial condition. We refer to these cases as the “indivisible case” and the “(2, 2) case”. Let us now assume that $ α ∈ Σ_{0, 0} $ and describe the symplectic resolution explicitly.

A symplectic resolution can often by found by installing a stability parameter. For quiver varieties $ \M(Q, α) $, this boils down to studying the Mumford quotient $ \M_θ (Q, α) = \Rep(Π_Q, α) \quot_θ \GL_α $. Here $ θ ∈ ℤ^{Q_0} $ is a stability parameter with $ θ · α = 0 $. The points of the quotient $ \M_θ (Q, α) $ are in one-to-one correspondence with $ θ $-polystable representations of $ Π_Q $ of dimension $ α $.

The variety $ \M_θ (Q, α) $ is sometimes smooth. The smoothness heavily depends on the choice stability parameter $ θ $. We fix the following terminology:

\begin{definition}
\label{def:prelim-generic}
Let $ (Q, α) $ be a quiver setting with $ α ∈ Σ_{0, 0} $. Let $ θ ∈ ℤ^{Q_0} $ be a stability parameter with $ θ · α = 0 $. Then
\begin{itemize}
\item $ θ $ is \emph{generic} if $ θ · β ≠ 0 $ for all $ 0 < β < α $.
\item If $ α $ is indivisible, then $ θ $ is \emph{pseudo-generic} if it is generic.
\item If $ (\gcd(α), p(\gcd(α)^{-1} α) = (2, 2) $, then $ θ $ is \emph{pseudo-generic} if $ θ · β ≠ 0 $ for all $ 0 < β < α $ with $ β ≠ α/2 $.
\end{itemize}
\end{definition}

The existence of generic stability parameters depends heavily on the dimension vector $ α $. In the indivisible case, the linear inequalities $ θ · β ≠ 0 $ cut out finitely many hyperplanes from the space orthogonal to $ α $ and almost every stability parameter $ θ $ with $ θ · α = 0 $ is generic. In the (2, 2) case, there is not a single generic stability parameter, but almost every stability parameter $ θ $ with $ θ · α = 0 $ is pseudo-generic.

We are now ready to recall the description of the symplectic resolution of $ \M(Q, α) $ in the indivisible and the (2, 2) case from the account of Bellamy and Schedler: 

\begin{theorem}[{\cite{bellamy-schedler}}]
Let $ (Q, α) $ be a quiver setting with $ α ∈ Σ_{0, 0} $. If $ α $ is indivisible, then $ \M_θ (Q, α) $ is a projective symplectic resolution for generic $ θ $. If $ (\gcd(α), p(\gcd(α)^{-1} α) = (2, 2) $, then $ \M_θ (Q, α) $ is a partial resolution for pseudo-generic $ θ $ and a symplectic resolution is obtained by blowing up $ \M_θ (Q, α) $ along the remaining singularity.
\end{theorem}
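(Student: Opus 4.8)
The plan is to construct the resolution through variation of GIT and then verify the three defining properties of a symplectic resolution. For every $θ$ with $θ · α = 0$ the Mumford quotient $\M_θ(Q, α)$ carries a projective morphism $π \colon \M_θ(Q, α) → \M(Q, α)$ onto the affine quotient $\M_0(Q, α) = \M(Q, α)$, arising from the inclusion of degree-zero invariants into the ring of $θ$-semi-invariants. Both spaces are Hamiltonian reductions of the holomorphic symplectic vector space $\Rep(\qQ, α) = T^* \Rep(Q, α)$ along the moment map $μ$, so whenever $\M_θ(Q, α)$ is smooth it inherits a canonical reduced symplectic form $ω_θ$. To prove the theorem I would establish: (i) smoothness of $\M_θ(Q, α)$ in the indivisible case, and smoothness away from one controllable locus in the $(2, 2)$ case; (ii) that $π$ is an isomorphism over the regular locus, hence projective and birational; and (iii) that $ω_θ$ restricts to $π^* ω$ over the regular locus, so that $π$ is symplectic. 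Property (iii) is automatic once (i) and (ii) hold, because $ω_θ$ and $ω$ both descend from the same form on $\Rep(\qQ, α)$.

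In the indivisible case the heart is an analysis of stability via \autoref{def:prelim-generic}. Genericity of $θ$ forces every $θ$-semistable point of $μ^{-1}(0)$ to be $θ$-stable, since a proper nonzero subrepresentation would have dimension $0 < β < α$ and hence violate $θ · β ≠ 0$. A $θ$-stable representation is simple over $Π_Q$, so its endomorphism algebra is $ℂ$ and its $\GL_α$-stabilizer is exactly the scalars; therefore $\operatorname{PGL}_α$ acts freely on the stable locus. Because the scalar torus $ℂ^× ⊂ \GL_α$ acts trivially on $\Rep(\qQ, α)$, the moment map lands in the annihilator of its Lie algebra, and standard moment-map theory shows $dμ$ is surjective onto this annihilator at every stable point; hence $μ^{-1}(0)^{θ\text{-st}}$ is smooth and its free $\operatorname{PGL}_α$-quotient $\M_θ(Q, α)$ is a smooth symplectic manifold of dimension $2 p(α)$. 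Here the hypothesis $α ∈ Σ_{0, 0}$ is indispensable: it guarantees that simple representations of dimension $α$ exist and, via Crawley-Boevey's flatness analysis of the moment map \cite{cb-geometry-moment}, that $μ^{-1}(0)$ is a complete intersection of the expected dimension. Finally, a simple $Π_Q$-module is $θ$-stable for every $θ$, so $π$ restricts to an isomorphism over the open stratum of simple representations; by the Bellamy-Schedler stratification \cite{bellamy-schedler} this stratum is exactly the regular locus of $\M(Q, α)$, and hence $π$ is a symplectic resolution.

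In the $(2, 2)$ case we have $α = 2β$ with $β = α/2$ indivisible and $p(β) = 2$. No generic parameter exists, because $θ · β = ½\, θ · α = 0$ for every admissible $θ$; the best available is a pseudo-generic $θ$, for which $θ · β' = 0$ with $0 < β' < α$ holds only at $β' = β$. Re-running the stability argument verbatim shows that $\M_θ(Q, α)$ is smooth over the complement of the image of the strictly semistable locus, which consists precisely of the polystable representations $S_1 ⊕ S_2$ with $S_1, S_2$ both $θ$-stable of dimension $β$. I would then analyze the transverse geometry along this locus by applying the local structure theorem to the decomposition $τ = (2, β)$ at the diagonal point $S^{⊕ 2}$ (and $τ = (1, β; 1, β)$ away from it): the relevant local quiver has a single vertex carrying $2 p(β) = 4$ loops with dimension vector $2$, and the Weyl group $ℤ/2$ of its residual $\GL_2$-stabilizer is the source of the remaining singularity. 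The claim to be proven is that, after $\M_θ$ has resolved all directions transverse to subvectors $β' ≠ β$, the remaining transverse singularity is a two-dimensional Kleinian singularity of type $A_1$, i.e.\ $ℂ^2 / \{\pm 1\}$, whose unique crepant resolution is a single blow-up; globalizing this blow-up over the strictly semistable locus yields the asserted symplectic resolution.

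The main obstacle is the $(2, 2)$ case, and it has two interlocking parts. First, one must pin down the remaining transverse singularity exactly, showing that pseudo-genericity leaves precisely a two-dimensional $A_1$ singularity and nothing of higher dimension; this is delicate because the naive symmetric-square model $S^2 \M_θ(Q, β)$ would suggest a four-dimensional quotient singularity $ℂ^4 / \{\pm 1\}$, which by Verbitsky's symplectic-reflection criterion admits no symplectic resolution at all. The point is that pseudo-generic $θ$ already performs the partial resolution that cuts this down to the resolvable $A_1$ transverse type, and making this reduction precise is the crux. Second, one must verify that the final blow-up is \emph{crepant}, so that the result is genuinely symplectic rather than merely smooth; this is exactly where the numerical coincidence $p(β) = 2$ is used, as it forces the transverse singularity into the unique self-resolving Kleinian type. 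By contrast, in the indivisible case the only substantial inputs beyond formal GIT are the existence and genericity of stable points and the flatness of the moment map, which \autoref{def:prelim-generic} and \cite{cb-geometry-moment} supply directly.
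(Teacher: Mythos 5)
The paper does not actually prove this statement: it is quoted as a black box from Bellamy--Schedler, so there is no internal proof to compare against. Your indivisible-case argument is the standard one (genericity forces semistable $=$ stable, the $\operatorname{PGL}_α$-action on the stable locus is free, $dμ$ surjects onto the annihilator of the scalars at stable points, and $π$ is birational over the simple locus) and is essentially correct; the only caveat is that the identification of the simple locus with the regular locus of $\M(Q, α)$ is itself a nontrivial theorem of Bellamy--Schedler about the smooth locus, not a formal consequence of the stratification.

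The $(2,2)$ case contains a genuine gap. Write $ α = 2β $ with $ p(β) = 2 $, so $ \dim \M_θ(Q, α) = 2p(2β) = 10 $ and the strictly polystable locus $ \{S_1 ⊕ S_2\} $ is an $8$-dimensional copy of (an open part of) $ S^2 \M_θ(Q, β) $. Your claim that pseudo-genericity leaves ``precisely a two-dimensional $A_1$ singularity and nothing of higher dimension'' is only true at points $ S_1 ⊕ S_2 $ with $ S_1 \not≅ S_2 $, where the local quiver is the Kleinian $A_1$ setting. Along the $4$-dimensional diagonal stratum $ \{S^{⊕ 2}\} $ the transverse slice is \emph{not} a product of a smooth factor with an $A_1$ surface singularity --- it cannot be, since the singular locus $ S^2 \M_θ(Q, β) $ is itself singular along the diagonal. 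There the local quiver is one vertex with $ p(β) = 2 $ loops and dimension vector $2$, and the transverse singularity is the six-dimensional Lehn--Sorger model $ \{A ∈ \mathfrak{sp}_4 : A^2 = 0\} $ arising in O'Grady's examples (whose own singular locus is the $ ℂ^4/\{±1\} $ you mention --- that quotient is the singular locus of the transverse model, not the transverse model itself). The entire substance of the theorem in this case is that blowing up the \emph{reduced} singular locus resolves this deeper six-dimensional singularity smoothly and crepantly, which is Lehn--Sorger's explicit computation transported to $ \M_θ(Q, 2β) $ via the étale-local structure theorem. Your proposed crux --- verifying that a transverse $A_1$ is resolved by one blow-up --- replaces the actual difficulty with a statement that is false at the diagonal, so the argument as written would fail exactly where the theorem has content.
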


The remaining singularity of $ \M_θ (Q, α) $ in the (2, 2) case is quite interesting. We study its local structure in \autoref{sec:excfiber}.

\addtocontents{toc}{\SkipTocEntry}
\subsection{Calabi-Yau structure}
The module category $ \Mod_{Π_Q} $ has the structure of an $ A_∞ $-category and its precise structure is well-known. To start with, recall from e.g.~\cite{Bocklandt-book} that an \emph{$ A_∞ $-category} $ \cat C $ consists of a set of $ ℤ $-graded hom spaces together with higher products $ μ^k $ for $ k ≥ 1 $ of degree $ 2 - k $ satisfying the $ A_∞ $-relations
\begin{equation*}
\sum_{k ≥ l ≥ m ≥ 1} (-1)^{‖a_m‖ + … + ‖a_1‖} μ(a_k, …, μ(a_l, …, a_{m+1}), …, a_1) = 0.
\end{equation*}
The category $ \cat C $ is \emph{cyclic} of degree $ n $ if it is equipped with a bilinear nondegenerate pairing $ ⟨-, -⟩: \Hom(X, Y) × \Hom(Y, X) → ℂ $ of degree $ -n $ such that $ ⟨x, y⟩ = (-1)^{|x||y|} ⟨y, x⟩ $ and
\begin{equation*}
⟨μ^k (a_{k+1}, …, a_2), a_1⟩ = (-1)^{‖a_{k+1}‖ (‖a_k‖ + … + ‖a_1‖)} ⟨μ^k (a_k, …, a_1), a_{k+1}⟩.
\end{equation*}
We can now regard the specific $ A_∞ $-category $ \Mod_{Π_Q} $. The objects of this category are the finite-dimensional $ Π_Q $-modules. The hom spaces are $ ℤ $-graded and given by $ \Hom(M, N) = \Ext^{•} (M, N) $. The differential $ μ^1 $ vanishes, and the product $ μ^2 $ is the signed (Yoneda) product of morphisms and extensions. The algebra $ Π_Q $ has a Calabi-Yau property. The exception of Dynkin quivers here refers to the non-extended (finite type) Dynkin quivers.

\begin{theorem}[\cite{CrawleyBoevey-Kimura}]
If $ Q $ is not a Dynkin quiver, then the algebra $ Π_Q $ is Calabi-Yau-2.
\end{theorem}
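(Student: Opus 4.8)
The plan is to verify the bimodule definition of the Calabi-Yau-2 property directly, by exhibiting a self-dual length-$2$ projective resolution of $Π_Q$ over its enveloping algebra $Π_Q^\env \coloneqq Π_Q \otimes_{ℂ} Π_Q^{\algopp}$. Recall that an algebra $A$ is Calabi-Yau-2 if it is homologically smooth and there is an isomorphism $\RHom_{A^\env}(A, A^\env) ≅ A[-2]$ in $\D(A^\env)$; this bimodule symmetry is exactly what induces the nondegenerate cyclic pairing $\Ext^i(M, N) ≅ \Ext^{2-i}(N, M)^\vee$ on $\Mod_{Π_Q}$ described above. So it suffices to produce a projective $Π_Q^\env$-resolution of $Π_Q$ of length $2$ that is carried to itself, reversed and shifted by $2$, under $\Hom_{Π_Q^\env}(-, Π_Q^\env)$.

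Let $R \coloneqq \bigoplus_{i ∈ Q_0} ℂ\, e_i$ be the semisimple subalgebra of vertex idempotents, and regard the arrow span $\qQ_1$ of the double quiver as an $R$-bimodule. The candidate is the standard three-term complex
\[
0 → Π_Q \otimes_R Π_Q \xrightarrow{\iota} Π_Q \otimes_R \qQ_1 \otimes_R Π_Q \xrightarrow{\partial} Π_Q \otimes_R Π_Q \xrightarrow{\mu} Π_Q → 0,
\]
where $\mu$ is multiplication, $\partial(x \otimes a \otimes y) = xa \otimes y - x \otimes ay$, and $\iota$ sends the $v$-th generator $e_v \otimes e_v$ to the noncommutative (Leibniz) derivative of the preprojective relation $ρ_v = \sum_{a ∈ Q_1}(a a^* - a^* a)$ at $v$. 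First I would check, by the standard calculus of quivers with relations, that this is a complex and that it is exact at the three rightmost nonzero terms for \emph{every} quiver $Q$: surjectivity of $\mu$ is clear, exactness at $Π_Q \otimes_R Π_Q$ computes the bimodule of noncommutative $1$-forms, and exactness at the middle term records that the only degree-one bimodule syzygies are generated by the single relation per vertex.

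The entire weight of the theorem then sits in exactness at the leftmost term, i.e.\ injectivity of $\iota$, and this is precisely where the hypothesis that $Q$ is not Dynkin enters. Equivalently, injectivity of $\iota$ says that there are no syzygies among the preprojective relations, so that $Π_Q$ has global dimension $2$. I expect this to be the main obstacle. The plan to establish it is to evaluate the complex against the vertex simples $S_i$: it then computes $\Ext^\bullet_{Π_Q}(S_i, S_j)$, and $\ker \iota$ contributes exactly the extensions of degree $\geq 3$. One shows these vanish for non-Dynkin $Q$ via the classical dichotomy that $Π_Q$ is finite-dimensional if and only if $Q$ is Dynkin: a nonzero kernel would, through the self-duality below, force the minimal bimodule resolution to be periodic and hence $Π_Q$ to be self-injective and finite-dimensional, contradicting the non-Dynkin assumption. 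A Hilbert-series comparison of the two sides of the complex yields the same conclusion quantitatively.

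Granting the resolution, the Calabi-Yau-2 isomorphism follows from its self-duality. Applying $\Hom_{Π_Q^\env}(-, Π_Q^\env)$, the two end terms $Π_Q \otimes_R Π_Q$ are self-dual projective bimodules, while the middle term is carried to $Π_Q \otimes_R \qQ_1^\vee \otimes_R Π_Q$. The key input is that the doubling equips $\qQ_1$ with the nondegenerate symplectic pairing $ω(a, a^*) = 1 = -ω(a^*, a)$, giving a canonical $R$-bimodule isomorphism $\qQ_1 ≅ \qQ_1^\vee$. Under these identifications the dualized complex is isomorphic to the original one read backwards, so $\RHom_{Π_Q^\env}(Π_Q, Π_Q^\env) ≅ Π_Q[-2]$, while homological smoothness is immediate from the finite projective resolution. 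The only care needed is the bookkeeping of signs: the antisymmetry of $ω$ supplies exactly the Koszul sign that makes the induced pairing graded-symmetric and cyclic in the sense recalled above. As an alternative route one could invoke Koszulity of $Π_Q$ for non-Dynkin $Q$ and compute its quadratic dual, but the explicit self-dual resolution is the most transparent.
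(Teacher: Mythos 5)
The paper offers no proof of this theorem; it is imported verbatim from the literature via the citation \cite{CrawleyBoevey-Kimura}, so there is nothing internal to compare against. Your proposal reconstructs the standard argument: exhibit the self-dual length-two complex of projective $\Pi_Q$-bimodules built from the vertex algebra $R$, the arrow bimodule of $\qQ$, and the Leibniz derivatives of the preprojective relations; prove it is a resolution when $Q$ is not Dynkin; and then read off $\RHom_{\Pi_Q^\env}(\Pi_Q, \Pi_Q^\env) \cong \Pi_Q[-2]$ from the symplectic self-pairing of the doubled arrows. This is the correct and standard route, and your reduction of the entire theorem to left-exactness of $\iota$ is exactly where the difficulty lives.

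The one step that is not yet a proof is your justification of that left-exactness. The chain ``nonzero kernel $\Rightarrow$ the minimal bimodule resolution is periodic $\Rightarrow$ $\Pi_Q$ is self-injective and finite-dimensional'' does not hold formally: a nonzero kernel of $\iota$ gives no a priori control over the higher syzygies, and periodicity of a resolution does not by itself yield self-injectivity or finite-dimensionality. (In the Dynkin case the kernel of $\iota$ happens to be a twist of $\Pi_Q$ itself, which is what produces the periodic resolution there, but that identification is an additional computation, not a formal consequence of the kernel being nonzero.) The argument you mention only in passing --- the graded Euler-characteristic/Hilbert-series comparison, showing that the alternating sum of the Hilbert series of the three projective terms already equals that of $\Pi_Q$ precisely in the non-Dynkin case --- is the one that actually closes this gap and should be promoted from an aside to the main argument, or else one cites directly the lemma that $\Pi_Q$ has global dimension $2$ for non-Dynkin $Q$. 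The remainder of the proposal, including the observation that the antisymmetry of the pairing $\omega$ on the doubled arrows supplies the Koszul sign making the induced pairing cyclic, is sound, and homological smoothness is indeed immediate once the finite projective bimodule resolution is in hand.
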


The CY-2 property of $ Π_Q $ induces a Calabi-Yau structure on the category $ \Mod_{Π_Q} $. Given that the category $ \Mod_{Π_Q} $ arises from a minimal model construction and is only defined up to isomorphism, it is tricky to make its Calabi-Yau structure explicit. However, there exists a model of $ \Mod_{Π_Q} $ whose $ A_∞ $-structure is cyclic. We are not aware of a specific proof of the cyclicity in the literature, but it appears to be well-known among experts and we shall simply assume it:

\begin{theorem}
\label{th:prelim-cyclicity}
If $ Q $ is not a Dynkin quiver, then the $ A_∞ $-category $ \Mod_{Π_Q} $ is cyclic.
\end{theorem}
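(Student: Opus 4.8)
The plan is to exhibit a strictly cyclic differential graded model of $\Mod_{\Pi_Q}$ and then transport cyclicity to the minimal model $\Ext^\bullet$ by a cyclic version of homotopy transfer. The two inputs are, first, a chain-level incarnation of the Calabi--Yau--$2$ structure as a strictly invariant pairing of degree $-2$, and second, the cyclic minimal model theorem (Kontsevich--Soibelman, Kajiura), which guarantees that such a pairing descends to a cyclic $A_\infty$-structure on cohomology once the transfer data are chosen compatibly with the pairing.

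For the first input I would work with the standard self-dual bimodule resolution of $\Pi_Q$. Writing $S = \mathbb{C}^{Q_0}$ and $V$ for the span of the arrows of the double quiver, the preprojective algebra of a non-Dynkin quiver admits a length-two resolution
\begin{equation*}
0 \to \Pi_Q \otimes_S \Pi_Q \to \Pi_Q \otimes_S V \otimes_S \Pi_Q \to \Pi_Q \otimes_S \Pi_Q \to \Pi_Q \to 0
\end{equation*}
of $\Pi_Q$ as a $\Pi_Q$-bimodule, whose middle term is self-dual (arrows of $\overline Q$ come in dual pairs $a, a^*$) and whose outer terms are mutually dual. This self-duality is precisely the manifestation of the Calabi--Yau--$2$ property furnished by the Crawley-Boevey--Kimura theorem. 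Using this resolution to compute $\RHom_{\Pi_Q}(M,N)$ for finite-dimensional modules $M, N$, the composition $\RHom(M,N) \otimes \RHom(N,M) \to \RHom(M,M) \to \mathbb{C}[-2]$, where the last map is the Calabi--Yau trace, defines a pairing $\langle -,-\rangle$. I would then check that self-duality of the resolution together with closedness of the trace make $\langle-,-\rangle$ a chain map that is graded symmetric and strictly invariant under composition, i.e. strictly cyclic of degree $2$ at the DG level; nondegeneracy on cohomology is exactly the $\Ext$-duality $\Ext^i(M,N) \cong \Ext^{2-i}(N,M)^*$. Since $\Ext^\bullet$ is concentrated in degrees $0,1,2$ and finite-dimensional, all the complexes involved are finite-dimensional.

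For the second input I would choose, for each pair of objects, a Hodge-type splitting of the finite-dimensional $\Hom$-complex that is adapted to the pairing: the projection onto cohomology is the adjoint of the inclusion and the contracting homotopy is skew-adjoint. Over $\mathbb{C}$ such compatible splittings always exist for finite-dimensional complexes carrying a nondegenerate pairing. With these data the cyclic homotopy transfer theorem produces a minimal $A_\infty$-structure on $H^\bullet = \Ext^\bullet$ whose products $\mu^k$ satisfy the cyclic symmetry relation of a cyclic $A_\infty$-category, and whose $\mu^2$ is the Yoneda product. By uniqueness of minimal models up to $A_\infty$-isomorphism, this cyclic structure is a model of $\Mod_{\Pi_Q}$, which is the asserted cyclic model.

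I expect the strictification in the second paragraph to be the main obstacle. The Calabi--Yau--$2$ property as stated is a derived, homotopy-coherent datum ($\RHom_{\Pi_Q^{\env}}(\Pi_Q, \Pi_Q^{\env}) \simeq \Pi_Q[-2]$), and turning it into a strictly invariant DG pairing with the correct Koszul signs requires a genuinely self-dual resolution and careful sign bookkeeping; in the abstract one is instead led to an obstruction theory for lifting a weak (homotopy) inner product to a strict cyclic one, which is unobstructed here because $\Pi_Q$ is a smooth Calabi--Yau algebra, so the relevant class lifts to negative cyclic homology. For preprojective algebras one can shortcut the whole argument: $\Pi_Q$ is the Jacobi algebra $\Jac(\overline Q, W)$ of the canonical potential $W = \sum_{a \in Q_1} [a, a^*]$, and the cyclic $A_\infty$-structure on $\Ext^\bullet$ can be read directly off $W$ via the standard dictionary between potentials and cyclic minimal models. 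This is presumably the folklore the statement alludes to.
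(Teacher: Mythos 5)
The paper offers no proof of this statement: it explicitly declares the cyclicity to be folklore (``we are not aware of a specific proof in the literature \dots and we shall simply assume it''), so there is nothing in the text to compare your argument against. Your proposal is therefore strictly more than what the paper does, and its overall architecture --- produce a strictly cyclic chain-level model from the self-dual length-two bimodule resolution of $\Pi_Q$ (which exists exactly in the non-Dynkin case), then descend to the minimal model $\Ext^\bullet$ by cyclic homotopy transfer in the style of Kontsevich--Soibelman and Kajiura --- is the standard and, I believe, correct route. You also correctly identify where the real work lies: converting the homotopy-coherent Calabi--Yau datum $\RHom_{\Pi_Q^{\env}}(\Pi_Q,\Pi_Q^{\env})\simeq \Pi_Q[-2]$ into a strictly invariant pairing with consistent Koszul signs.

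Two points deserve more care. First, your appeal to the cyclic transfer theorem assumes a splitting ``adapted to the pairing'' on the $\Hom$-complexes, which you justify by nondegeneracy --- but the pairing built from composition followed by the trace is guaranteed nondegenerate only on cohomology (that is the $\Ext$-duality $\Ext^i(M,N)\cong \Ext^{2-i}(N,M)^*$), not on the chain level, and the usual statements of cyclic transfer require either chain-level nondegeneracy or a homotopy that is (skew-)self-adjoint for a possibly degenerate pairing; this is fixable but is precisely the kind of gap that keeps the result at folklore status. Second, the closing shortcut is imprecise: $\Pi_Q$ is not the Jacobi algebra of the potential $W=\sum_a[a,a^*]$ in the CY-3 sense (cyclic derivatives of $W$ are linear in the arrows and do not give the preprojective relations); in the CY-2 superpotential formalism the relations are the vertex components of $W$ itself, and the ``potential $\Rightarrow$ cyclic minimal model'' dictionary you invoke is the three-dimensional one. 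The aside should either be dropped or replaced by a reference to the two-dimensional superpotential description of graded CY-2 algebras.
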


Our next step is to describe the $ A_∞ $-structure of $ \Mod_{Π_Q} $. We shall at this point not aim to describe the entire structure, but focus on a collection of pairwise non-isomorphic simples $ S_1, …, S_k $ associated with a point in a codimension-2 leaf. Vanishing of higher products in an $ A_∞ $-category is also known as formality. Fortunately, Davison has already investigated this question and found that any $ Σ $-sequence in a Calabi-Yau-2 algebra is formal. While this formality theorem in principle only asserts that there exists a model with vanishing higher products, we will simply assume that this model agrees with the cyclic model:

\begin{theorem}[{\cite[Theorem 1.2]{Davison}}]
Let $ Q $ be a non-Dynkin quiver and let $ S_1, …, S_k $ be simple finite-dimensional representations of $ Π_Q $. Then the higher products $ μ^{≥3} $ on $ \Mod_{Π_Q} $ vanish.
\end{theorem}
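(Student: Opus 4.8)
The plan is to combine the cyclic structure of $\Mod_{Π_Q}$ from \autoref{th:prelim-cyclicity} with a degree count. Write $E = \Ext^\bullet(\bigoplus_i S_i, \bigoplus_i S_i)$ for the Ext-algebra of the chosen simples, equipped with its minimal $A_\infty$-structure (so $μ^1 = 0$ and $μ^2$ is the Yoneda product); we may and do assume the $S_i$ pairwise non-isomorphic, as in the situation of a codimension-$2$ leaf. Since $Q$ is non-Dynkin, $Π_Q$ is Calabi-Yau-2, so Serre duality furnishes nondegenerate pairings $\Ext^i(S_a, S_b) \times \Ext^{2-i}(S_b, S_a) \to ℂ$ and hence $\Ext^i \cong (\Ext^{2-i})^*$. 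Consequently $E$ is concentrated in degrees $0, 1, 2$: the groups in degrees $i < 0$ or $i > 2$ vanish because their Serre duals sit in negative degree. By Schur's lemma $E^0 = \bigoplus_a \Ext^0(S_a, S_a) = ℂ^k$ is spanned by the identities $\id_{S_a}$, and dually $E^2 \cong (E^0)^*$ is $k$-dimensional. This pairing is precisely the degree-$(-2)$ cyclic form of \autoref{th:prelim-cyclicity}.

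Next I would pin down which higher products can be nonzero. If any argument of $μ^k$ is one of the units $\id_{S_a} \in E^0$, then strict unitality forces $μ^k = 0$ for every $k \neq 2$; so for $k \geq 3$ it suffices to treat arguments $a_1, \dots, a_k \in E^{\geq 1}$, i.e.\ with each $|a_i| \in \{1, 2\}$. As $μ^k$ has degree $2 - k$,
\[
|μ^k(a_k, \dots, a_1)| = \Big(\sum_{i=1}^k |a_i|\Big) + 2 - k \geq k + 2 - k = 2.
\]
Because $E$ vanishes above degree $2$, a nonzero value forces equality throughout: every $|a_i| = 1$ and the output lies in $E^2$. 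Thus the only potentially nonvanishing higher products are the maps $(E^1)^{\otimes k} \to E^2$.

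Finally I would use cyclicity to eliminate even these. By nondegeneracy of the cyclic pairing, $μ^k$ vanishes as soon as all pairings $\langle μ^k(a_k, \dots, a_1), a_0 \rangle$ vanish. For such a pairing to be nonzero we need $|a_0| + |μ^k(a_k, \dots, a_1)| = 2$; combined with the previous step this forces $|a_0| = 0$, so $a_0 \in E^0$, and by multilinearity we may take $a_0 = \id_{S_a}$ a unit. The cyclic symmetry relation of \autoref{th:prelim-cyclicity} rotates $a_0$ into an argument slot of $μ^k$,
\[
\langle μ^k(a_k, \dots, a_1), a_0 \rangle = \pm \langle μ^k(a_{k-1}, \dots, a_1, a_0), a_k \rangle,
\]
and since $a_0$ is a unit and $k \geq 3 \neq 2$, strict unitality makes the right-hand side vanish. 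Hence every cyclic pairing of $μ^{\geq 3}$ is zero, and nondegeneracy yields $μ^{\geq 3} = 0$.

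The main obstacle is not the bookkeeping but securing a single minimal model of $\Mod_{Π_Q}$ that is simultaneously strictly unital and cyclic, since the argument plays these two properties against one another; this is exactly the compatibility that \autoref{th:prelim-cyclicity} is invoked to supply. A more intrinsic route, closer to Davison's own, avoids this by reading the vanishing off the Calabi-Yau structure of the Ginzburg/derived-preprojective dg algebra directly, where the CY-2 pairing already forces the minimal $A_\infty$-structure to be governed by a cubic potential; the degree count above is then the shadow of that statement. Either way, the essential input is the concentration of $E$ in degrees $\{0, 1, 2\}$ coming from CY-2, which is what rigidifies the $\Sigma$-sequence.
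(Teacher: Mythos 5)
The paper does not actually prove this statement: it is imported verbatim as \cite[Theorem 1.2]{Davison}, and the surrounding text explicitly concedes that the compatibility of Davison's formal model with the cyclic model of \autoref{th:prelim-cyclicity} is being \emph{assumed} rather than established. Your argument is therefore a genuinely different route — the folklore ``cyclic plus strictly unital plus degree count'' proof — and, as a piece of reasoning, it is correct: concentration of $E$ in degrees $\{0,1,2\}$ by CY-2 duality, the reduction to $(E^1)^{\otimes k}\to E^2$, the observation that the only degree-$0$ partners under the degree $-2$ pairing are multiples of identities (Schur, plus $\Hom^0(S_b,S_a)=0$ for $a\neq b$, in which case $\Ext^2(S_a,S_b)\cong\Hom^0(S_b,S_a)^*$ already vanishes), and the cyclic rotation into an argument slot killed by strict unitality for $k\geq 3$ all check out against the sign conventions in the preliminaries. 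The comparison is this: your proof derives formality \emph{from} the existence of a single minimal model that is simultaneously cyclic and strictly unital, which is exactly the unproven input of \autoref{th:prelim-cyclicity}; Davison's proof is unconditional and proceeds by entirely different (cohomological purity) methods, which is why the paper can cite it without having first justified the cyclic model. So your argument buys elementarity and transparency at the cost of resting on the one assumption the paper itself flags as folklore; it cannot replace the citation unless that compatibility is supplied, but it does explain cleanly \emph{why} the cyclic model, once granted, forces $\mu^{\geq 3}=0$ rather than merely being consistent with it. You correctly identify this as the main obstacle in your closing paragraph; I would only add that without it the argument is circular in the context of this paper, since the paper uses Davison's theorem precisely to license working in the formal cyclic model.
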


The dimensions of ext spaces between non-isomorphic representations can be determined in a combinatorial way, which we record as follows:

\begin{theorem}[{\cite[Lemma 1]{cb-kleinian}}]
\label{th:prelim-extdim}
If $ M $ and $ N $ are finite-dimensional representations of $ Π_Q $, then we have
\begin{equation*}
\dim \Ext^1 (M, N)  = \dim \Hom (M, N) + \dim \Hom(N, M) - (\dim M, \dim N).
\end{equation*}
\end{theorem}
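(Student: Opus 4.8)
\section*{Proof proposal}

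The plan is to extract the identity from the Euler form of $ Π_Q $ together with Calabi-Yau-2 duality. We work under the standing non-Dynkin hypothesis, so that $ Π_Q $ is Calabi-Yau-2 of global dimension $ 2 $ by the cited theorem of Crawley-Boevey--Kimura; in particular $ \Ext^{≥3}_{Π_Q}(M, N) = 0 $, and the Euler characteristic $ \chi(M, N) \coloneqq \sum_i (-1)^i \dim \Ext^i(M, N) = \dim \Hom(M, N) - \dim \Ext^1(M, N) + \dim \Ext^2(M, N) $ is well-defined. It then suffices to establish two facts: (i) $ \chi(M, N) = (\dim M, \dim N) $, the Cartan form; and (ii) $ \dim \Ext^2(M, N) = \dim \Hom(N, M) $. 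Substituting (ii) into (i) and solving for $ \dim \Ext^1 $ yields the claim at once.

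For (i) I would use the standard length-two projective resolution of $ M $ over $ Π_Q $. Writing $ S = \bigoplus_{i ∈ Q_0} ℂ e_i $ for the semisimple subalgebra and $ E = ℂ \qQ_1 $ for the span of the double-quiver arrows as an $ S $-bimodule, this resolution reads $ 0 → Π_Q \otimes_S M → Π_Q \otimes_S E \otimes_S M → Π_Q \otimes_S M → M → 0 $, the module-level shadow of the self-dual bimodule resolution underlying the Calabi-Yau-2 structure. Applying $ \Hom_{Π_Q}(-, N) $ and the adjunction $ \Hom_{Π_Q}(Π_Q \otimes_S W, N) ≅ \Hom_S(W, N) $ converts it into the three-term complex $ \bigoplus_{i ∈ Q_0} \Hom(M_i, N_i) → \bigoplus_{a ∈ \qQ_1} \Hom(M_{t(a)}, N_{h(a)}) → \bigoplus_{i ∈ Q_0} \Hom(M_i, N_i) $, whose cohomology in degrees $ 0, 1, 2 $ computes $ \Hom $, $ \Ext^1 $ and $ \Ext^2 $. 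Since the alternating sum of dimensions of a complex equals that of its cohomology, $ \chi(M, N) = 2 \sum_{i} (\dim M_i)(\dim N_i) - \sum_{a ∈ \qQ_1} (\dim M_{t(a)})(\dim N_{h(a)}) $. Reading the first sum as $ (\dim M)^{\mathsf T}(2 I_{Q_0})(\dim N) $ and the second as $ (\dim M)^{\mathsf T} A (\dim N) $, where $ A $ is the adjacency matrix of $ \qQ $, this collapses to $ (\dim M)^{\mathsf T}(2 I_{Q_0} - A)(\dim N) = (\dim M, \dim N) $ by the description of the Cartan form recalled above.

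For (ii) I would invoke the Calabi-Yau-2 duality $ \Ext^i_{Π_Q}(M, N) ≅ \kdual{\Ext^{2-i}_{Π_Q}(N, M)} $ furnished by the cyclic structure; taking $ i = 2 $ gives $ \Ext^2(M, N) ≅ \kdual{\Hom(N, M)} $ and hence equality of dimensions. Combining with (i) completes the argument.

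I expect the genuine obstacle to be the honest justification of the length-two resolution and, in particular, the identification of the degree-$ 2 $ cohomology of the complex with $ \Ext^2 $ rather than with a bare Euler-characteristic term: this is exactly where the preprojective relation $ \sum_{a} a a^* - a^* a = 0 $ enters as the second differential, and where the non-Dynkin hypothesis is needed to ensure the resolution terminates in length precisely two. Once the resolution is in place, the Euler-form computation and the duality step are formal, and the combinatorial simplification to the Cartan form is routine.
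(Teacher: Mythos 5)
Your argument is correct and is essentially the proof of the cited result \cite[Lemma 1]{cb-kleinian}: the paper itself offers no proof, and Crawley-Boevey's argument is exactly the three-term complex $\bigoplus_i \Hom(M_i, N_i) \to \bigoplus_{a ∈ \qQ_1} \Hom(M_{t(a)}, N_{h(a)}) \to \bigoplus_i \Hom(M_i, N_i)$ you write down, with Euler characteristic $(\dim M, \dim N)$. The only cosmetic difference is that Crawley-Boevey identifies the cokernel of the last map directly with $\Hom(N, M)^*$ via the trace pairing rather than routing through the abstract duality $\Ext^2(M,N) \cong \Hom(N,M)^*$, which sidesteps the need to verify that the complex computes $\Ext^2$ and makes the statement valid without the non-Dynkin hypothesis.
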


\section{Local-to-global functor}
\label{sec:main-functor}
Thanks to the description of the $ A_∞ $-structure of $ \Mod_{Π_Q} $, we can immediately construct a correspondence between the local and the global representations. More precisely, we can construct an $ A_∞ $-functor $ F: \{\tilde S_1, …, \tilde S_k\} → \{S_1, …, S_k\} $ which is a strict faithful unital embedding. Indeed, we simply map $ \tilde S_i $ to $ S_i $, map identity elements to identity elements, co-identity elements to co-identity elements and choose arbitrary linear graded identifications between the hom spaces $ \Hom_{Π_{Q'}} (\tilde S_i, \tilde S_j) $ and $ \Hom_{Π_Q} (S_i, S_j) $ for $ i ≠ j $.

The functor can be extended to the twisted completions of its domain and codomain categories. It then becomes interesting to study those twisted complexes which are quasi-isomorphic in $ \Tw\Mod_{Π_Q} $ to actual modules. We call such twisted objects \emph{honest modules}. The functor $ F $ behaves nicely with respect to honest modules, and we collect the important statements in \autoref{th:result-functor}. The proof is provided for reference in \autoref{sec:functor}.

\begin{proposition}
\label{th:result-functor}
Let $ Q $ be a non-Dynkin quiver. Let $ S_1, …, S_k $ be simples $ S_i ∈ \Mod_{Π_Q} $. Denote by $ Q' $ the local quiver and let $ \tilde S_1, …, \tilde S_k ∈ \Mod_{Π_{Q'}} $ be its standard simples. Then there is a unital strict faithful $ A_∞ $-embedding
\begin{align*}
F: \{\tilde S_1, …, \tilde S_k\} &\verylongisoto \{S_1, …, S_k\}, \\
\tilde S_i &\verylongmapsto S_i.
\end{align*}
The induced functor $ \H F: \HTw\{\tilde S_1, …, \tilde S_k\} → \HTw\{S_1, …, S_k\} $ is an embedding as well and sends honest modules to honest modules. Moreover, if the images of two honest modules are isomorphic, then the honest modules are already isomorphic.
\end{proposition}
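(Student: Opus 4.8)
The plan is to reduce the entire $A_∞$-structure on both sides to a single nondegenerate pairing via formality and cyclicity, to build $F$ as an isometric identification of hom spaces, and then to transport everything through the twisted completion. First I would record that, by the formality theorem of Davison together with cyclicity (\autoref{th:prelim-cyclicity}), the $A_∞$-structure on each of the full subcategories $\{\tilde S_1, …, \tilde S_k\}$ and $\{S_1, …, S_k\}$ is concentrated in $μ^2$: one has $μ^1 = 0$ and $μ^{≥3} = 0$, and $μ^2$ is the cup product, which is forced by the degree-$2$ cyclic pairing. Since the $S_i$ are pairwise non-isomorphic simples, $\Hom(S_i, S_j) = \Ext^2(S_i, S_j) = 0$ for $i ≠ j$, so off the diagonal $\Hom_{Π_Q}(S_i, S_j) = \Ext^1(S_i, S_j)$, while on the diagonal $\Hom(S_i, S_i) = ℂ\,\id ⊕ \Ext^1(S_i, S_i) ⊕ ℂ\,\coid$ with $\dim \Ext^1(S_i, S_i) = 2p(β_i)$ by \autoref{th:prelim-extdim}. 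By construction of the local quiver the vertex simples $\tilde S_i$ carry exactly the same graded hom dimensions. Finally $μ^2$ is entirely determined by the restriction of the pairing to $\Ext^1(S_i, S_j) × \Ext^1(S_j, S_i) → \Ext^2(S_i, S_i) = ℂ\,\coid$, which is a perfect pairing for $i ≠ j$ and, since cyclicity forces antisymmetry in degree $1$, a symplectic form for $i = j$.

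Next I would construct $F$ as a strict functor ($F^{≥2} = 0$) by choosing, degree by degree, graded linear identifications of hom spaces that send $\id ↦ \id$, $\coid ↦ \coid$, and are isometric for the cyclic pairings. Such isometries exist because over $ℂ$ any two nondegenerate pairings of equal dimension are equivalent: I pick arbitrary isomorphisms $\Ext^1(\tilde S_i, \tilde S_j) → \Ext^1(S_i, S_j)$ for $i < j$, let the perfect pairing dictate the identification for the opposite direction, and choose symplectic isomorphisms on each diagonal $\Ext^1(\tilde S_i, \tilde S_i)$. Because $μ^1 = μ^{≥3} = 0$, the $A_∞$-functor equations for a strict $F$ collapse to the single condition that $F^1$ be a unital chain map intertwining $μ^2$; the only nonvanishing instances are the unit axioms (handled by $\id, \coid$) and the pairing identity (handled by the isometry condition). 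Thus $F$ is a unital strict $A_∞$-embedding, and since each $F^1$ is a linear isomorphism, $F$ is in fact fully faithful.

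I would then extend $F$ to the twisted completions. As $F$ is strict it induces $\Tw F$ and hence $\H F$. Full faithfulness is inherited, since the hom complex between two twisted complexes is assembled functorially from the hom complexes of the generators, on which $F^1$ is a linear isomorphism; so $\H F$ is fully faithful, giving the embedding statement at once. For the final assertion, a fully faithful functor reflects isomorphisms and is injective on isomorphism classes: an isomorphism $\H F(M) ≅ \H F(N)$ lifts to a morphism $M → N$ whose inverse also lifts, and faithfulness forces the composites to be identities, whence $M ≅ N$.

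It remains to show $\H F$ preserves honest modules, and this is where I expect the real work to lie. The plan is to characterize honesty internally: a twisted complex over the simples is quasi-isomorphic to an actual module precisely when it can be presented in a single cohomological degree with connecting differential valued in $\Ext^1$, equivalently as a Maurer-Cartan element $δ ∈ \bigoplus_{i,j} \Ext^1(\tilde S_i, \tilde S_j)$ with $μ^2(δ, δ) = 0$, i.e.\ an iterated extension of the $\tilde S_i$. Since $F$ preserves cohomological degree, sends $\Ext^1 → \Ext^1$, and intertwines $μ^2$, it carries such a Maurer-Cartan element to one of exactly the same shape over the $S_i$, and carries cone formation to cone formation, so the iterated extension realizing $M$ maps to an iterated extension of the corresponding $S_i$, which is again an honest $Π_Q$-module. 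The main obstacle is establishing this dictionary itself: one must check that an honest $Π_{Q'}$-module with composition factors among the $\tilde S_i$ is reconstructed through the minimal $A_∞$-model as a single-degree twisted complex with differential in $\Ext^1$ and no higher twisting data, and conversely that every such Maurer-Cartan element integrates to a genuine module. This module-theoretic recognition of honesty, rather than the formal transport of structure through $F$, is the delicate step.
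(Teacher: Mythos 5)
Your proposal follows essentially the same route as the paper: both identify the $A_∞$-structures on the two sides via \autoref{th:prelim-extdim}, \autoref{th:prelim-cyclicity} and Davison's formality, define $F$ strictly by matching hom spaces (your isometry condition for the cyclic pairings is the precise reason the identification intertwines $μ^2$, a point the paper leaves implicit in "send basis vectors to corresponding basis vectors"), and reduce the honest-module claims to the recognition of honest modules as shift-free twisted complexes, i.e.\ Maurer-Cartan elements in $\bigoplus\Ext^1$. The single step you flag as delicate and leave open is exactly the paper's \autoref{th:functor-construction-honest}: there, an honest $T$ quasi-isomorphic to a module $M$ is compared with the shift-free twisted complex $M'$ built from a Jordan-Hölder filtration of $M$, and because all hom spaces between the simples are concentrated in nonnegative degrees, the null-homotopies witnessing the quasi-isomorphism are forced to vanish, so $T \cong M'$ on the nose and its components are unshifted copies of the $\tilde S_i$; the converse direction (every shift-free twisted complex is an iterated extension, hence an honest module) is an induction on cones. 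With that lemma supplied, your argument coincides with the paper's.
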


Every stability parameter $ θ ∈ ℤ^{Q_0} $ of $ Q $ gives rise to the \emph{localized stability parameter} $ θ' ∈ ℤ^{Q'_0} $ of $ Q' $ given by
\begin{equation*}
θ' = (β_1 · θ, …, β_k · θ) ∈ ℤ^{Q'_0}.
\end{equation*}

The functor $ F $ has very beneficial properties with respect to stable, semistable and polystable objects, which we state in \autoref{th:main-functor-stability}. The proof is easy and provided for reference in \autoref{sec:functor-stability}.

\begin{lemma}
\label{th:main-functor-stability}
Let $ (Q, α) $ be a quiver setting and $ S_1, …, S_k ∈ \Mod_{Π_Q} $ be simple representations. Let $ Q' $ be the local quiver and $ \tilde S_1, …, \tilde S_k $ its standard simples. Let $ θ ∈ ℤ^{Q_0} $ be a stability parameter and $ θ' $ its localized version. Let $ X ∈ \Tw\{\tilde S_1, …, \tilde S_k\} $ be an honest module. Then
\begin{equation*}
X \text{ is } θ' \text{-semistable/stable/polystable} \quad \Longleftrightarrow \quad F(X) \text{ is } θ \text{-semistable/stable/polystable}.
\end{equation*}
\end{lemma}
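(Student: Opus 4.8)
The plan is to reduce each of the three stability notions to a numerical condition on the dimension vectors of submodules and then to transport the submodule lattice across $ F $. Throughout I regard the honest module $ X $ as the finite-dimensional $ Π_{Q'} $-module it represents, and $ F(X) $ as the corresponding $ Π_Q $-module, and I use King's numerical criterion: for a parameter with $ θ · \dim M = 0 $, the module $ M $ is $ θ $-semistable (resp.\ $ θ $-stable) if and only if $ θ · \dim N \le 0 $ for every submodule $ N \subseteq M $ (resp.\ strictly negative for every proper nonzero submodule), while $ M $ is $ θ $-polystable if and only if it is a direct sum of $ θ $-stable modules of vanishing slope. The role of the definition $ θ' = (β_1 · θ, …, β_k · θ) $ is precisely to make $ F $ slope-preserving: if $ Y $ is an honest $ Π_{Q'} $-module with local dimension vector $ \dim Y = (d_1, …, d_k) ∈ ℕ^{Q'_0} $, recording the multiplicities of the standard simples $ \tilde S_i $, then exactness of $ \H F $ and $ F(\tilde S_i) = S_i $ give $ \dim F(Y) = \sum_i d_i β_i $, whence
\begin{equation*}
θ' · \dim Y = \sum_i d_i (β_i · θ) = θ · \sum_i d_i β_i = θ · \dim F(Y).
\end{equation*}

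The heart of the argument is an inclusion-preserving bijection between the submodules of $ X $ and those of $ F(X) $, induced by $ F $. For the forward direction I would use that $ \H F $ is a triangulated embedding carrying honest modules to honest modules: a submodule $ Y \subseteq X $, presented as an honest triangle $ Y → X → X/Y $, is sent to an honest triangle $ F(Y) → F(X) → F(X/Y) $, exhibiting $ F(Y) $ as a submodule of $ F(X) $; faithfulness makes $ Y ↦ F(Y) $ injective and inclusion-preserving. For the reverse direction I would appeal to essential surjectivity: any submodule $ Z \subseteq F(X) $ has all of its composition factors among $ S_1, …, S_k $, since these are the only factors of $ F(X) $, so $ Z $ already lies in $ \HTw\{S_1, …, S_k\} $. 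As $ \H F $ is a fully faithful triangulated functor whose image contains the generators $ S_1, …, S_k $ and is closed under shifts and cones, it is an equivalence onto $ \HTw\{S_1, …, S_k\} $; fullness then lifts the inclusion $ Z \hookrightarrow F(X) $ to a monomorphism $ Z' \hookrightarrow X $ with $ F(Z') = Z $. Because $ \H F $ reflects isomorphisms, this bijection matches the zero and total submodules on each side, hence also proper nonzero submodules with proper nonzero submodules.

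Granting the lattice bijection and the slope identity, the three equivalences drop out. For semistability, as $ Y $ ranges over all submodules of $ X $ the quantity $ θ' · \dim Y $ ranges, bijectively and with equal values, over the quantities $ θ · \dim Z $ for $ Z \subseteq F(X) $; so $ θ' · \dim Y \le 0 $ holds for all $ Y $ if and only if $ θ · \dim Z \le 0 $ holds for all $ Z $. The stable case is identical with strict inequalities over proper nonzero submodules. For polystability I would observe that $ \H F $, being additive and an equivalence onto $ \HTw\{S_1, …, S_k\} $, preserves and reflects direct-sum decompositions, and then combine this with the stable case just established to match decompositions of $ X $ into $ θ' $-stable summands with decompositions of $ F(X) $ into $ θ $-stable summands.

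The one step carrying real content is the reverse direction of the submodule correspondence, that no submodule of $ F(X) $ escapes the image of $ F $. This rests on upgrading the embedding of \autoref{th:result-functor} to an equivalence onto $ \HTw\{S_1, …, S_k\} $, which in turn uses that $ F $ is an isomorphism on all hom-spaces between the generating simples; the dimension count underwriting this is exactly \autoref{th:prelim-extdim}, matching $ \dim \Ext^1(\tilde S_i, \tilde S_j) $ with $ \dim \Ext^1(S_i, S_j) $, together with the fact that $ S_1, …, S_k $ are the only composition factors in play. By comparison, the slope identity and the reduction of polystability to the stable case are routine bookkeeping.
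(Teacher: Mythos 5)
Your overall architecture coincides with the paper's: both proofs rest on the slope identity $θ' · \dim Y = θ · \dim F(Y)$, a two-way correspondence between submodules of $X$ and submodules of $F(X)$, King's numerical criterion, and preservation of direct sums for the polystable case (these are exactly the four bullets of the paper's auxiliary \autoref{th:functor-stability-subrep}). However, your justification of the one step you correctly single out as carrying real content — that no submodule of $F(X)$ escapes the image of $F$ — has a genuine gap. You derive it from the claim that $\H F$ is fully faithful, hence an equivalence onto $\HTw\{S_1, …, S_k\}$, with the hom-space match supplied by \autoref{th:prelim-extdim}. That match holds for $i ≠ j$, but fails on self-extensions: the $\tilde S_i$ are the vertex simples of the \emph{Kleinian} (loop-free) local quiver, so $\Ext^1(\tilde S_i, \tilde S_i) = 0$, whereas \autoref{th:prelim-extdim} gives $\dim \Ext^1(S_i, S_i) = 2 - (β_i, β_i) = 2p(β_i)$, which equals $2$ whenever $β_i$ is an isotropic imaginary root — a situation that genuinely occurs (e.g.\ the root $e_2$ in the paper's worked $A_3$ example, where $\M^{\simp}(Q, e_2)$ is two-dimensional). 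The paper's own proof of \autoref{th:result-functor} states explicitly that the endomorphism spaces are strictly smaller in $\Mod_{Π_{Q'}}$: $F$ is a faithful embedding but not full, so you cannot lift the inclusion $Z \hookrightarrow F(X)$ by fullness, and $\H F$ is not an equivalence onto $\HTw\{S_1, …, S_k\}$.

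The repair is the paper's subcomplex argument: by \autoref{th:functor-construction-honest}, a submodule $Y'$ of the honest module $F(X)$ can be realized as a subcomplex of the twisted complex $F(X)$. Since $F$ is strict and $F(X)$ is the image of a twisted complex over the local quiver, the differential of $F(X)$ has entries only in the image of $F$ — in particular it involves none of the extra self-extension classes of the $S_i $ — and a subcomplex inherits this property, hence lies in the image of $F$. With that substitution for your ``reverse direction,'' the rest of your argument (slope identity, forward direction via strictness, King's criterion, and the direct-sum bookkeeping for polystability) goes through exactly as in the paper.
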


\section{Symmetries of the labeled local quiver}
\label{sec:symmetry}
Next, let us investigate symmetries among the roots $ β_1, …, β_k $ associated with a codimension-2 leaf. As we see in \autoref{th:result-symmetry}, every symmetry among the roots $ β_1, …, β_k $ actually comes from a symmetry of the whole labeled graph $ (Q', α', (β_1, …, β_k)) $. The proof is easily done by case-checking, using the information provided by the pairings $ (β_i, β_j) $ and is elaborated in \autoref{sec:app-symmetry}.

\begin{proposition}
\label{th:result-symmetry}
Let $ (Q, α) $ be a quiver setting with $ α ∈ Σ_{0, 0} $. Let $ L $ be a codimension-2 leaf, given by the roots $ β_1, …, β_k $. Let $ σ ∈ S_k $ be a permutation which preserves the roots $ β_1, …, β_k $. Then $ σ $ is trivial or a composition of one of the primitive symmetries listed in \autoref{fig:symmetry-symmetry-possible}. In particular, we have $ σ ∈ \Aut(Q', α', (β_1, …, β_k)) $.
\end{proposition}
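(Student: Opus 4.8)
The plan is to exploit that a codimension-2 leaf forces the local quiver to be Kleinian, so that the labeled quiver $(Q', α', (β_1, …, β_k))$ is simply an affine Dynkin diagram decorated with the roots $β_i$. Concretely, since $\M(Q', α')$ is two-dimensional, the reduced local quiver $\overline{Q''}$ is an affine Dynkin diagram of type $\tilde A_n$, $\tilde D_n$ or $\tilde E_{6,7,8}$; its vertices are indexed by $1, …, k$, all the $β_i$ are real roots with $(β_i, β_i) = 2$, and the loop count $2 p(β_i)$ at vertex $i$, the arrow count $-(β_i, β_j)$ between vertices $i ≠ j$, and the marks $α' = (n_1, …, n_k)$ together encode the entire labeled quiver. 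The crucial observation is that every one of these data is a function of the root $β_i$ and the pairings $(β_i, β_j)$, which is exactly the information I intend to track.

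First I would show that a root-preserving $σ$ is automatically a quiver automorphism. By hypothesis $β_{σ(i)} = β_i$ for all $i$, hence $(β_{σ(i)}, β_{σ(j)}) = (β_i, β_j)$ and $p(β_{σ(i)}) = p(β_i)$; thus $σ$ carries the arrow and loop configuration of $Q'$ to itself and is a graph automorphism. It then remains to check that $σ$ fixes the mark vector $α'$. Here I would argue that the symmetric matrix $C'_{ij} = (β_i, β_j)$ is an affine generalized Cartan matrix, hence of corank one, with kernel the line spanned by the positive vector $α'$ (a positive multiple of the null root $δ$). Since $σ$ is a coordinate permutation preserving $C'$, it preserves $\ker C'$; as it also preserves the coordinate sum and positivity of $α'$, it must fix $α'$ on the nose. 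This yields $n_{σ(i)} = n_i$, and together with $β_{σ(i)} = β_i$ it gives $σ ∈ \Aut(Q', α', (β_1, …, β_k))$.

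Finally, to upgrade this to the statement that $σ$ is trivial or a composition of the primitive symmetries of \autoref{fig:symmetry-symmetry-possible}, I would run through the classification of affine Dynkin diagrams type by type. For each type I would use the explicit pairings $(β_i, β_j)$ to determine which pairs of distinct vertices can carry equal roots, and hence which graph automorphisms are genuinely realized by a root-preserving $σ$; reading these off recovers precisely the generators recorded in the figure (for instance the reflections of the cycle in type $\tilde A_n$, and the leg-swaps and central flip in type $\tilde D_n$). I expect the main obstacle to be the bookkeeping of this case analysis: one must verify exhaustively that no accidental root coincidence $β_i = β_j$ between non-interchangeable vertices can occur, so that the root-preserving symmetries are genuinely generated by the short primitive list rather than by a larger subgroup of diagram automorphisms.
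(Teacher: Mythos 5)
Your overall strategy coincides with the paper's: observe that the reduced local quiver is an affine Dynkin diagram whose arrow multiplicities are $-(\beta_i,\beta_j)$, and then determine, type by type, which coincidences $\beta_i=\beta_j$ the pairings permit. The paper's proof (\autoref{th:symmetry-agree}) is exactly this case check, with the workhorse being the remark that $\beta_i=\beta_j$ forces $(\beta_i,\beta_l)=(\beta_j,\beta_l)$ for all $l$, hence identical adjacencies for the vertices $i$ and $j$; this kills all coincidences except those recorded in \autoref{fig:symmetry-symmetry-possible}.

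There is, however, a concrete error in your setup: the claim that all the $\beta_i$ are real roots with $(\beta_i,\beta_i)=2$. If that were so, the proposition would be vacuous: a coincidence $\beta_i=\beta_j$ with $i\neq j$ gives $(\beta_i,\beta_i)=(\beta_i,\beta_j)=-\#\{\text{arrows from } i \text{ to } j\}\le 0$, which is incompatible with $(\beta_i,\beta_i)=2$, so under your hypothesis no nontrivial $\sigma$ could exist at all. In fact a repeated root is necessarily isotropic imaginary, $(\beta_i,\beta_i)=0$ --- this is precisely the Bellamy--Schedler lemma the paper invokes to exclude the $A_1$ swap --- and isotropic imaginary $\beta_i$ genuinely occur in isotropic decompositions (such a vertex carries $2p(\beta_i)=2$ loops in the unreduced local quiver). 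This also affects your argument that $\sigma$ fixes $\alpha'$: with zeros on the diagonal, $((\beta_i,\beta_j))_{ij}$ is not a generalized Cartan matrix, so the corank-one/null-root argument does not apply as stated. That step is easily repaired, since any graph automorphism of an affine Dynkin diagram preserves the marks; but the case analysis, which is the actual content of the proposition and which you defer as bookkeeping, has to be run with the correct real-versus-isotropic dichotomy in hand rather than with the assumption that every label is real.
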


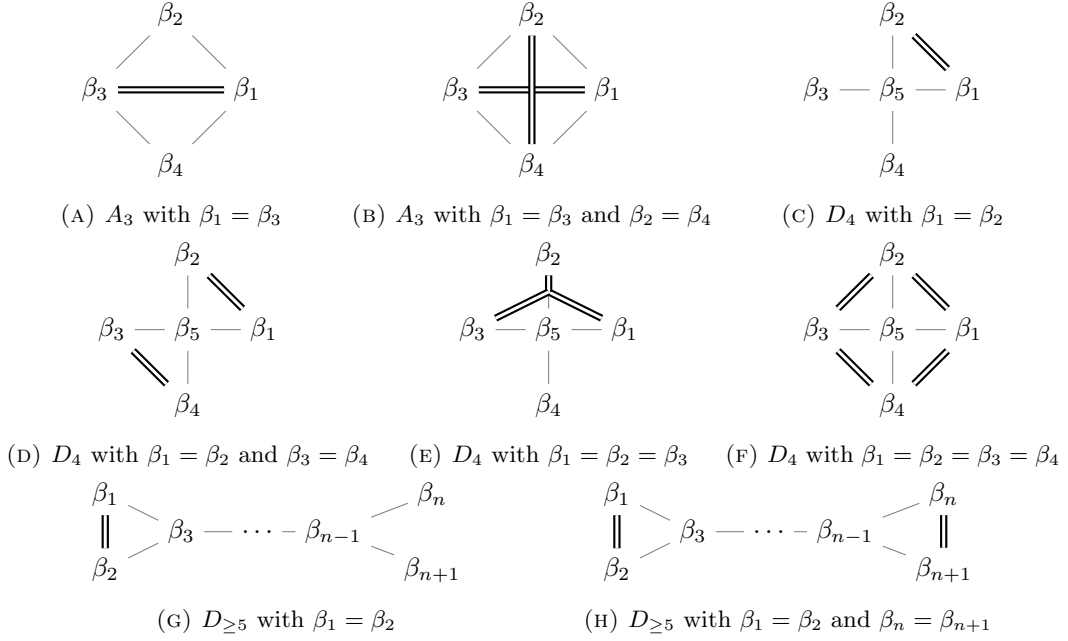
\begin{figure}
\centering
\begin{subfigure}{0.3\linewidth}
\centering
\begin{tikzpicture}
\path (0:1) node (A) {$ β_1 $};
\path (90:1) node (B) {$ β_2 $};
\path (180:1) node (C) {$ β_3 $};
\path (270:1) node (D) {$ β_4 $};
\path[draw, gray, -] (A) to (B);
\path[draw, gray, -] (B) to (C);
\path[draw, gray, -] (C) to (D);
\path[draw, gray, -] (D) to (A);
\path[draw, thick, double equal sign distance] (C) to (A);
\end{tikzpicture}
\caption{$ A_3 $ with $ β_1 = β_3 $}
\label{fig:symmetry-symmetry-possible-A3-1}
\end{subfigure}
\begin{subfigure}{0.33\linewidth}
\centering
\begin{tikzpicture}
\path (0:1) node (A) {$ β_1 $};
\path (90:1) node (B) {$ β_2 $};
\path (180:1) node (C) {$ β_3 $};
\path (270:1) node (D) {$ β_4 $};
\path[draw, gray, -] (A) to (B);
\path[draw, gray, -] (B) to (C);
\path[draw, gray, -] (C) to (D);
\path[draw, gray, -] (D) to (A);
\path[draw, thick, double equal sign distance] (C) to (A);
\path[draw, thick, double equal sign distance] (B) to (D);
\end{tikzpicture}
\caption{$ A_3 $ with $ β_1 = β_3 $ and $ β_2 = β_4 $}
\label{fig:symmetry-symmetry-possible-A3-2}
\end{subfigure}
\begin{subfigure}{0.3\linewidth}
\centering
\begin{tikzpicture}
\path (0:1) node (A) {$ β_1 $};
\path (90:1) node (B) {$ β_2 $};
\path (180:1) node (C) {$ β_3 $};
\path (270:1) node (D) {$ β_4 $};
\path (0, 0) node (E) {$ β_5 $};
\path[draw, gray, -] (A) to (E);
\path[draw, gray, -] (B) to (E);
\path[draw, gray, -] (C) to (E);
\path[draw, gray, -] (D) to (E);
\path[draw, thick, double equal sign distance] (A) to (B);
\end{tikzpicture}
\caption{$ D_4 $ with $ β_1 = β_2 $}
\end{subfigure}
\begin{subfigure}{0.33\linewidth}
\centering
\begin{tikzpicture}
\path (0:1) node (A) {$ β_1 $};
\path (90:1) node (B) {$ β_2 $};
\path (180:1) node (C) {$ β_3 $};
\path (270:1) node (D) {$ β_4 $};
\path (0, 0) node (E) {$ β_5 $};
\path[draw, gray, -] (A) to (E);
\path[draw, gray, -] (B) to (E);
\path[draw, gray, -] (C) to (E);
\path[draw, gray, -] (D) to (E);
\path[draw, thick, double equal sign distance] (A) to (B);
\path[draw, thick, double equal sign distance] (C) to (D);
\end{tikzpicture}
\caption{$ D_4 $ with $ β_1 = β_2 $ and $ β_3 = β_4 $}
\end{subfigure}
\begin{subfigure}{0.3\linewidth}
\centering
\begin{tikzpicture}
\path (0:1) node (A) {$ β_1 $};
\path (90:1) node (B) {$ β_2 $};
\path (180:1) node (C) {$ β_3 $};
\path (270:1) node (D) {$ β_4 $};
\path (0, 0) node (E) {$ β_5 $};
\path[draw, gray, -] (A) to (E);
\path[draw, gray, -] (B) to (E);
\path[draw, gray, -] (C) to (E);
\path[draw, gray, -] (D) to (E);
\path[draw, thick, double equal sign distance] (A) to (0, 0.5);
\path[draw, thick, double equal sign distance] (B) to (0, 0.5);
\path[draw, thick, double equal sign distance] (C) to (0, 0.5);
\end{tikzpicture}
\caption{$ D_4 $ with $ β_1 = β_2 = β_3 $}
\end{subfigure}
\begin{subfigure}{0.3\linewidth}
\centering
\begin{tikzpicture}
\path (0:1) node (A) {$ β_1 $};
\path (90:1) node (B) {$ β_2 $};
\path (180:1) node (C) {$ β_3 $};
\path (270:1) node (D) {$ β_4 $};
\path (0, 0) node (E) {$ β_5 $};
\path[draw, gray, -] (A) to (E);
\path[draw, gray, -] (B) to (E);
\path[draw, gray, -] (C) to (E);
\path[draw, gray, -] (D) to (E);
\path[draw, thick, double equal sign distance] (A) to (B);
\path[draw, thick, double equal sign distance] (B) to (C);
\path[draw, thick, double equal sign distance] (C) to (D);
\path[draw, thick, double equal sign distance] (D) to (A);
\end{tikzpicture}
\caption{$ D_4 $ with $ β_1 = β_2 = β_3 = β_4 $}
\end{subfigure}
\begin{subfigure}{0.45\linewidth}
\centering
\begin{tikzpicture}
\path (0, 0.5) node (B) {$ β_1 $};
\path (0, -0.5) node (A) {$ β_2 $};
\path (1, 0) node (C) {$ β_3 $};
\path (2, 0) node (D) {$ … $};
\path (3, 0) node (E) {$ β_{n-1} $};
\path (4.3, 0.5) node (F) {$ β_n $};
\path (4.3, -0.5) node (G) {$ β_{n+1} $};
\path[draw, gray, -] (A) to (C);
\path[draw, gray, -] (B) to (C);
\path[draw, gray, -] (C) to (D);
\path[draw, gray, -] (D) to (E);
\path[draw, gray, -] (E) to (F);
\path[draw, gray, -] (E) to (G);
\path[draw, thick, double equal sign distance] (A) to (B);
\end{tikzpicture}
\caption{$ D_{≥5} $ with $ β_1 = β_2 $}
\end{subfigure}
\begin{subfigure}{0.45\linewidth}
\centering
\begin{tikzpicture}
\path (0, 0.5) node (B) {$ β_1 $};
\path (0, -0.5) node (A) {$ β_2 $};
\path (1, 0) node (C) {$ β_3 $};
\path (2, 0) node (D) {$ … $};
\path (3, 0) node (E) {$ β_{n-1} $};
\path (4.3, 0.5) node (F) {$ β_n $};
\path (4.3, -0.5) node (G) {$ β_{n+1} $};
\path[draw, gray, -] (A) to (C);
\path[draw, gray, -] (B) to (C);
\path[draw, gray, -] (C) to (D);
\path[draw, gray, -] (D) to (E);
\path[draw, gray, -] (E) to (F);
\path[draw, gray, -] (E) to (G);
\path[draw, thick, double equal sign distance] (A) to (B);
\path[draw, thick, double equal sign distance] (F) to (G);
\end{tikzpicture}
\caption{$ D_{≥5} $ with $ β_1 = β_2 $ and $ β_n = β_{n+1} $}
\end{subfigure}
\caption{This figure depicts the possible symmetries among roots in the local quiver. Long equality signs in the form of double strokes have been used to indicate equality among roots. For better readability, the arrows of the quiver have been grayed out. For brevity, we have depicted similar situations only once. For instance, in the $ D_4 $ case it is equally possible that $ β_2 = β_3 $ instead of $ β_1 = β_2 $.}
\label{fig:symmetry-symmetry-possible}
\end{figure}

\section{Action on the Kleinian exceptional fiber}
\label{sec:excfiber}
Next, we show that a symmetry in the local quiver induces monodromy of the resolution. Regard the local quiver $ (Q', α') $ associated with the leaf $ L $. Let $ σ $ be an automorphism of $ (Q', α') $ which keeps the roots $ β_i $ invariant. Let us denote by $ \tilde{φ} $ the map $ \tilde{φ}: \Rep(Π_{Q'}, α') → \Rep(Π_{Q'}, α') $ which swaps a given representation around according to $ σ $, inserting $ ±1 $ signs as necessary. The map $ \tilde{φ}: \Rep(Π_{Q'}, α') → \Rep(Π_{Q'}, α') $ descends to the GIT quotient and to the Mumford by any stability parameter. This is automatic by the construction of GIT and we obtain the commutative diagram
\begin{center}
\begin{tikzcd}
\Rep(π_{Q'}, α') \arrow[r, "\tilde{φ}"] \arrow[d] & \Rep(π_{Q'}, α') \arrow[d] \\
\M_{θ'} (Q', α') \arrow[r, "φ_{θ'}"] \arrow[d] & \M_{θ'} (Q', α') \arrow[d] \\
\M (Q', α') \arrow[r, "φ"] & \M (Q', α')
\end{tikzcd}
\end{center}
Using this diagram, we observe that the map $ φ_{θ'} $ acts by actually flipping a representation according to $ σ $, even on the exceptional fiber.

\begin{proposition}
\label{th:main-excfiber-action}
Let $ (Q, α) $ be a quiver setting with $ α ∈ Σ_{0, 0} $. Assume $ α $ is indivisible or $ (\gcd(α), p(\gcd(α)^{-1} α)) = (2, 2) $. Let $ θ ∈ ℤ^{Q_0} $ be a pseudo-generic stability parameter. Let $ L $ be a codimension-2 leaf with local quiver $ (Q', α') $ and localized stability parameter $ θ' ∈ ℤ^{Q'_0} $. Let $ σ ∈ \Aut(Q', α', (β_1, …, β_k)) $ be a primitive automorphism, one from the collection depicted in \autoref{fig:symmetry-symmetry-possible}. Then $ σ $ acts on the exceptional fiber of $ \M_{θ'} (Q', α') $ by Dynkin automorphism of the same degree.

\end{proposition}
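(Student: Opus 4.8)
The plan is to extract the induced automorphism directly from the commutative diagram preceding the statement, in which $\varphi_{\theta'}$ is exhibited as the descent of the explicit vertex-flip $\tilde\varphi$. Since $L$ is a codimension-$2$ leaf, the reduced local quiver $\overline{Q''}$ is an extended Dynkin diagram and $\alpha'$ is its minimal imaginary root $\delta$; hence $\M(Q', \alpha')$ is the Kleinian singularity of the associated finite type $\Delta$, and (after blowing up the residual singularity in the $(2,2)$ case) $\M_{\theta'}(Q', \alpha')$ is its minimal resolution, whose exceptional fiber is the tree of projective lines $C_1, \dots, C_n$ with dual graph $\Delta$. I would first check that $\varphi$ fixes the origin $0 \in \M(Q', \alpha')$: the point $0$ is the class of the semisimple representation $\bigoplus_i \tilde S_i^{\oplus \delta_i}$, and since $\sigma$ preserves $\delta$ the flip $\tilde\varphi$ carries this representation to an isomorphic one. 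Consequently $\varphi_{\theta'}$ preserves the exceptional fiber $\pi^{-1}(0)$.

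The first half of the statement is then formal. As an algebraic automorphism of the smooth surface $\M_{\theta'}(Q', \alpha')$ stabilising $\pi^{-1}(0)$, the map $\varphi_{\theta'}$ permutes the irreducible components $C_i$, preserves their valences and the intersection numbers $C_i \cdot C_j$, and therefore induces an honest automorphism of the dual graph $\Delta$, that is, a Dynkin automorphism. Everything left to do is to identify this automorphism and to match its degree with that of $\sigma$.

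For the identification I would make the exceptional curves explicit and track how $\tilde\varphi$ moves them. The cleanest bookkeeping is through the homology lattice $H_2(\M_{\theta'}(Q', \alpha'), \mathbb{Z})$, which is the root lattice of $\Delta$ with the classes $[C_i]$ as simple roots and which, via the tautological bundles $\mathcal{R}_i$ attached to the vertices $i \in Q'_0$, is presented as $\mathbb{Z}^{Q'_0}/\langle \delta \rangle$. Because $\tilde\varphi$ is the vertex permutation $\sigma$ decorated by a prescribed choice of signs, $\varphi_{\theta'}$ carries the class of $\mathcal{R}_i$ to that of $\mathcal{R}_{\sigma(i)}$ corrected by those signs; reading off the resulting permutation of the simple-root classes $[C_i]$ pins down the Dynkin automorphism. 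I would then run through \autoref{fig:symmetry-symmetry-possible} case by case and confirm that the degree matches: each single transposition of two coinciding roots produces the order-$2$ reflection of $A_n$ or $D_n$, the cyclic identification of three legs of $\tilde D_4$ produces the order-$3$ triality, and a general $\sigma$ is handled by composing primitives.

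The main obstacle is exactly this case analysis, and the difficulty is genuinely the sign bookkeeping in conjunction with the extending node. For the primitives that move the extending node (the trivial representation) — the reflections of the $\tilde A_n$-cycle and several of the $\tilde D$-symmetries — the bare coordinate permutation on $\mathbb{Z}^{Q'_0}/\langle \delta \rangle$ need not preserve the cone of effective curve classes, and the signs can affect both the specific automorphism and its order, so they must be pinned down carefully. I therefore expect to fix the signs so that $\tilde\varphi$ preserves the preprojective relation together with the cyclic pairing of \autoref{th:prelim-cyclicity}, whence $\tilde\varphi$ descends to a symplectic automorphism landing in the diagram automorphism group, and then to compute the effect on $H_2$ by hand in each Dynkin type. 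Finally, the $(2,2)$ case calls for passing from the partial resolution $\M_{\theta'}(Q', \alpha')$ to the minimal resolution by blowing up the residual singularity; here I would use that $\varphi_{\theta'}$ lifts to the blow-up and that, by the discussion following \autoref{th:namikawa-weyl-explicit}, the induced Dynkin automorphism does not depend on the chosen resolution.
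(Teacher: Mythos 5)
Your outline is correct, but it identifies the induced Dynkin automorphism by a genuinely different mechanism than the paper. The paper's proof first specializes to Crawley-Boevey's stability parameter $ θ'_0 = (-K, +1, …, +1) $ (with the negative entry placed on an outer vertex fixed by $ σ $, which primitivity always permits), where the components of the exceptional fiber are distinguished representation-theoretically by their \emph{socle}: the flip $ \tildeφ $ visibly sends a representation with socle $ \tilde S_i $ to one with socle $ \tilde S_{σ(i)} $, so the permutation of projective lines is literally $ σ $ and the sign and effective-cone issues you worry about never arise. Arbitrary generic $ θ' $ is then handled by conjugating through the unique isomorphism between the two minimal resolutions over $ \M(Q',α') $, and the non-generic $ θ' $ arising in the $ (2,2) $ case is treated \emph{without} blowing up, by directly exhibiting the exceptional fiber of the partial resolution as a single projective line with two or three singular points and tracking the action on those (the agreement with the monodromy of the true resolution is deferred to \autoref{th:app-monodromy-agree}). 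Your route via $ H_2 \cong ℤ^{Q'_0}/\langle δ\rangle $ and the tautological bundles is sound and arguably more uniform — it works for every generic $ θ' $ at once and makes the formal step (that one gets \emph{some} Dynkin automorphism) transparent — and the sign bookkeeping you flag as the main obstacle in fact evaporates, since the $ ±1 $ twists act on arrow spaces rather than vertex spaces and hence do not affect $ \varphi_{θ'}^* c_1(\mathcal{R}_i) = c_1(\mathcal{R}_{σ(i)}) $; the residual care needed is only in matching the classes $ [C_i] $ to vertices, which depends on the GIT chamber of $ θ' $. The paper's socle argument buys concreteness and avoids lattice-theoretic bookkeeping; yours buys independence from the specific Crawley-Boevey parameter and from the explicit case-by-case pictures in \autoref{sec:app-excfiber-cb}.
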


\begin{proof}
We provide here only a very short sketch of the proof. The rest can be found in the appendix. We first deal with the case that $ σ $ is trivial and $ θ' $ is the specific stability parameter $ θ = (-K, +1, …, +1) $, as chosen by Crawley-Boevey. Then $ \M_{θ'} (Q', α') $ is smooth and by \cite{cb-kleinian} the $ n+1 $ many projective lines in the special fiber are distinguished by their socle, which is one of the vertex simples. As we let $ σ $ act on a representation, the socle permutes accordingly and we conclude the desired statement.

If $ θ' $ is any generic stability parameter, the statement follows from the diagram comparing the two resolutions. The case where $ (Q', α') $ is $ A_3 $ and $ σ $ swaps two pairs of vertices, or where $ (Q', α') $ is $ D_4 $ and $ σ $ involves all outer vertices, is performed in \autoref{sec:app-excfiber-all}. This exhausts all options.
\end{proof}

\section{Identification of monodromy}
\label{sec:monodromy}
We are now ready to describe the monodromy of the bundle $ π: π^{-1} (L) → L $. We start with a quiver setting $ (Q, α) $ with $ α ∈ Σ_{0, 0} $ such that $ α $ is indivisible or $ (\gcd(α), p(\gcd(α)^{-1} α)) = (2, 2) $. By Namikawa's description, we are supposed to pick a symplectic resolution $ π: Y → \M(Q, α) $ and read off the monodromy of the fiber bundle $ π: π^{-1} (L) → L $ over every codimension-2 leaf $ L $. It is our aim to analyze the monodromy of the bundle $ π: π^{-1} (L) → L $, where $ π $ is a symplectic resolution of $ \M(Q, α) $. If $ α $ is indivisible, then a symplectic resolution is given by $ π: \M_θ (Q, α) → \M(Q, α) $ where $ θ $ is a generic stability parameter. If $ (\gcd(α), p(\gcd(α)^{-1} α)) = (2, 2) $, then a symplectic resolution is given by the blowup of $ \M_θ (Q, α) $ along the remaining singularity, where $ θ $ is a pseudo-generic stability parameter.

If $ (\gcd(α), p(\gcd(α)^{-1} α)) = (2, 2) $, then the partial resolution $ \M_θ (Q, α) $ with $ θ $ pseudo-generic is already smooth enough to read off the desired monodromy. Indeed, the special locus is examined in \autoref{sec:app-monodromy} and it suffices to trace the two or three remaining singular points around the leaf.

The second core ingredient in our identification of the monodromy is the functor $ F $ from \autoref{sec:main-functor}. We start with a codimension-2 leaf $ L $, given by the isotropic decomposition $ α = m_1 β_1 + … + m_k β_k $.
Let $ S_1, …, S_k ∈ \Mod_{Π_Q} $ be a collection of simples of dimensions $ β_1, …, β_k $ and let $ \tilde S_1, …, \tilde S_k ∈ \Mod_{Π_{Q'}} $ be the vertex simples of the Kleinian local quiver $ (Q', α') $. Then we have the functor
\begin{equation*}
F_{S_1, …, S_k}: \Tw\{\tilde S_1, …, \tilde S_k\} \to \Tw\{S_1, …, S_k\}.
\end{equation*}
We bundle these functors $ F $ together into a single function whose input is a collection $ (S_1, …, S_k) $. We write
\begin{equation*}
M ≔ \big(\M^{\simp} (Q, β_1) × … × \M^{\simp} (Q, β_k)\big) \setminus Δ.
\end{equation*}
Here $ Δ ⊂ \prod_{i = 1}^k \M^{\simp} (Q, β_i) $ denotes the subset of tuples where at least two representations are isomorphic. Note that the space $ M $ is not the same as the leaf $ L $, but rather $ L = M / Γ $ where $ Γ = \Aut(β_1, …, β_k) $ acts naturally on $ M $.

Let now $ θ ∈ ℤ^{Q_0} $ be a stability parameter and $ θ' ∈ ℤ^{Q'_0} $ its localized version. Denote by $ E $ the exceptional fiber of $ π: \M_{θ'} (Q', α') \to \M (Q', α') $. Then we can interpret the family of functors as a continuous function
\begin{equation*}
F: M \times E \to \M_θ (Q, α).
\end{equation*}
The group $ Γ $ acts on $ M $ and also on $ E $ by the lift of the action on $ \M(Q, α) $. The map $ F $ is in fact $ Γ $-invariant, analogous to the fact that “rotating the building plan of a house and simultaneously permuting the bricks gives the same house”, see \autoref{rem:app-monodromy-invariant}. We conclude that $ F $ descends to a map
\begin{equation*}
F: (M × E) / Γ → \M_θ (Q, α).
\end{equation*}

We immediately obtain the following result:

\begin{proposition}
\label{th:monodromy-fiberbundle}
Let $ (Q, α) $ be a quiver setting with $ α ∈ Σ_{0, 0} $. Let $ L $ be a codimension-2 leaf with local quiver $ (Q', α') $ and roots $ β_1, …, β_k $. Let $ θ ∈ ℤ^{Q_0} $ be a pseudo-generic stability parameter and let $ θ' ∈ ℤ^{Q'_0} $ be the localized stability parameter. Let $ E $ be the exceptional fiber of $ \M_{θ'} (Q', α') $. Write $ Γ = \Aut(Q', α', (β_1, …, β_k)) $. Then
\begin{enumerate}
\item The map $ π_M: (M × E) / Γ → M / Γ $ is a fiber bundle.
\item The bundles $ π $ and $ π_M $ are isomorphic via $ F $:
\begin{center}
\begin{tikzcd}
(M × E) / Γ \arrow[d, "π_M"] \arrow{r}{F}[swap]{\sim} & π^{-1} (L) \arrow[d, "π"] \\
M / Γ \arrow[r, "="] & L
\end{tikzcd}
\end{center}
\item The monodromy action of $ π_1 (L) $ on $ E $ is given by the composition $ π_1 (L) → Γ → \Aut(E) $.
\end{enumerate}
\end{proposition}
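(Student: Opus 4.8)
The plan is to deduce all three statements from a single structural fact — that $Γ$ acts freely on $M$ — together with the bijectivity properties of $F$ recorded in \autoref{th:result-functor} and \autoref{th:main-functor-stability}. I would prove (1) by first checking that the $Γ$-action on $M$ is free. By \autoref{th:result-symmetry} the group $Γ = \Aut(Q', α', (β_1, …, β_k))$ is exactly the group of permutations $σ ∈ S_k$ with $β_{σ(i)} = β_i$, acting on $\prod_i \M^\simp(Q, β_i)$ by permuting factors with equal root. If $σ$ fixes $(S_1, …, S_k) ∈ M$, then $S_{σ(i)} ≅ S_i$ for all $i$; since points of $M$ consist of pairwise non-isomorphic simples (we removed $Δ$), this forces $σ = \id$. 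A finite group acting freely acts properly, so $M → L = M/Γ$ is a principal $Γ$-bundle and $π_M\colon (M × E)/Γ → L$ is the associated fiber bundle with fiber $E$, which is (1).

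For (2) I would first check that $F$ intertwines the projections. A point $e ∈ E$ is a $θ'$-polystable representation of $Π_{Q'}$ with semisimplification $\bigoplus_i \tilde S_i^{⊕ m_i}$, hence an honest module over $\{\tilde S_1, …, \tilde S_k\}$; by \autoref{th:result-functor} and \autoref{th:main-functor-stability} its image $F(S_1, …, S_k; e)$ is a $θ$-polystable honest module over $\{S_1, …, S_k\}$ with semisimplification $\bigoplus_i S_i^{⊕ m_i}$, so $π F(S_1, …, S_k; e) = [\bigoplus_i S_i^{⊕ m_i}]$ is precisely the image of $(S_1, …, S_k; e)$ under $π_M$. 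This yields the commuting square. Bijectivity I would establish in two halves. For surjectivity, any $y ∈ π^{-1}(L)$ is $θ$-polystable with semisimplification of type $τ$, so all its composition factors lie among $S_1, …, S_k$ and $y$ is an honest module over $\{S_1, …, S_k\}$; since $F$ is bijective on objects and on hom spaces, it is an $A_∞$-equivalence onto the full subcategory $\{S_1, …, S_k\}$, whence $\H F$ is essentially surjective onto honest modules and hits $y$. For injectivity modulo $Γ$, if $F(S_1, …, S_k; e) ≅ F(S'_1, …, S'_k; e')$, applying $π$ and Krull--Schmidt matches the two tuples up to a root-preserving $σ ∈ Γ$, and after correcting by $σ$ the final clause of \autoref{th:result-functor} forces $e = e'$. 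Finally, continuity of $F$ (\autoref{rem:app-monodromy-invariant}) together with the bundle structure from (1) promotes this continuous bijection to a bundle homeomorphism, completing (2).

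For (3) I would invoke the standard monodromy computation for associated bundles. The principal $Γ$-cover $M → L$ carries a holonomy representation $π_1(L) → Γ$, and the monodromy of the associated $E$-bundle $(M × E)/Γ → L$ is its post-composition with the structure-group action $Γ → \Aut(E)$, that is $π_1(L) → Γ → \Aut(E)$. Transporting this along the isomorphism $F$ of (2) gives the monodromy of $π\colon π^{-1}(L) → L$, as claimed.

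I expect the main obstacle to lie in the homeomorphism claim inside (2): the set-theoretic bijectivity is a direct consequence of \autoref{th:result-functor} and Krull--Schmidt, but upgrading to a bundle isomorphism requires the continuous — indeed algebraic — dependence of the chosen hom-space identifications on the tuple $(S_1, …, S_k)$, which is exactly the content deferred to \autoref{rem:app-monodromy-invariant}. Everything else is a routine assembly of the freeness of the $Γ$-action and the associated-bundle formalism.
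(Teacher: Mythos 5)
Your proposal is correct and follows essentially the same route as the paper: freeness of the finite $Γ$-action on $M$ gives the bundle structure, the commuting square plus fiberwise bijectivity of $F$ (via \autoref{th:main-functor-stability} and the injectivity clause of \autoref{th:result-functor}) gives the isomorphism, and the associated-bundle description of the monodromy is exactly what the paper's explicit path-lifting computation $ρ(t) = F(\tilde{γ}(t), e)$ establishes. You supply slightly more detail than the paper in places (the freeness check, the Krull--Schmidt matching across fibers, and the continuity caveat), but the argument is the same in substance.
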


We can reformulate the proposition more explicitly. Start by choosing an arbitrary pseudo-generic stability parameter $ θ ∈ ℤ^{Q_0} $, which necessarily exists due to the assumptions on $ α $. Then apply the explicit description of the action of $ Γ $ on the exceptional fiber $ E ⊂ \M_{θ'} (Q', α') $ from \autoref{th:main-excfiber-action}. We obtain the following explicit explicit description of the Namikawa-Weyl group:

\begin{theorem}
\label{th:monodromy-result-th}
Let $ (Q, α) $ be a quiver setting with $ α ∈ Σ_{0, 0} $. Let $ L $ be a codimension-2 leaf with local quiver $ (Q', α') $ and roots $ β_1, …, β_k $. Then the following provides an exhaustive list of the possible equalities among the roots $ β_1, …, β_k $ and the Weyl groups associated with the leaf $ L $.

In case $ L $ is of $ A_n $ type, we have the following distinction:
\begin{enumerate}
\item If $ n = 1, 2, ≥ 4 $, then $ W_L $ is $ A_n $.
\item If $ n = 3 $ and all roots are distinct, then $ W_L $ is $ A_3 $.
\item If $ n = 3 $ and one pair of opposite roots agrees, then $ W_L $ is $ C_2 $.
\item If both pairs of opposite roots agree, then $ W_L $ is $ C_2 $.
\end{enumerate}
In case $ L $ is of $ D_n $ type, we have the following distinction:
\begin{enumerate}
\item If all roots are distinct, then $ W_L $ is $ D_n $.
\item If $ n = 4 $ and one pair of roots agrees, then $ W_L $ is $ B_3 $.
\item If $ n = 4 $ and one triple of roots agrees, then $ W_L $ is $ G_2 $.
\item If $ n = 4 $ and one quadruple of roots agrees, then $ W_L $ is $ G_2 $.
\item If $ n = 4 $ and two pairs of roots agree, then $ W_L $ is $ B_3 $.
\item If $ n ≥ 5 $ and one pair of roots agrees, then $ W_L $ is of $ B_{n-1} $.
\item If $ n ≥ 5 $ and two pairs of roots agree, then $ W_L $ is of $ B_{n-1} $.
\end{enumerate}
In case $ L $ is of $ E_6 $, $ E_7 $ or $ E_8 $ type, then $ W_L $ is $ E_6 $, $ E_7 $, $ E_8 $, respectively.
\end{theorem}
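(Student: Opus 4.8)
The plan is to reduce every case to a textbook Dynkin folding by combining Namikawa's formula (\autoref{th:namikawa-weyl-explicit}) with the monodromy description of \autoref{th:monodromy-fiberbundle}. By \autoref{th:namikawa-weyl-explicit} the contribution of the leaf $L$ is $\tilde W_L = (W_B)^{φ_L}$, the subgroup of the classical Weyl group $W_B$ of the transversal Kleinian type fixed under the monodromy image $φ_L \subseteq \Aut(E)$ of Dynkin automorphisms. By \autoref{th:monodromy-fiberbundle}(3) this monodromy $π_1(L) → \Aut(E)$ factors through $Γ = \Aut(Q', α', (β_1, …, β_k))$. Moreover $Γ$ acts freely on $M$: a fixed point of a nontrivial $γ ∈ Γ$ would force $S_i \cong S_{γ(i)}$ for some $i ≠ γ(i)$, placing the tuple in the removed diagonal $Δ$. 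As $M$ is irreducible, the quotient $M → M/Γ = L$ is a connected Galois covering, so the induced map $π_1(L) → Γ$ is surjective. Consequently $φ_L$ is exactly the image of $Γ$ in $\Aut(E)$, and the entire theorem amounts to computing $(W_B)^{φ_L}$ for this image.

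I would then feed in the two structural inputs already in hand. \autoref{th:result-symmetry} shows that $Γ$ is generated by the primitive symmetries enumerated in \autoref{fig:symmetry-symmetry-possible}, which is simultaneously the source of the exhaustiveness of the equality patterns in the statement. \autoref{th:main-excfiber-action} then guarantees that each such generator acts on the exceptional configuration $E$ by a Dynkin automorphism of the same order, so in particular a transposition never acts trivially. Since $\Aut(E)$ is the small automorphism group of a finite Dynkin diagram, the image $φ_L \subseteq \Aut(E)$ is determined by recording, generator by generator, which automorphism is produced: in type $A_m$ a nontrivial automorphism arises only for $m = 3$, where a transposition of opposite roots maps onto the unique flip of $A_3$; in type $D_{≥5}$ a fork-transposition maps onto the order-two fork-swap of $D_m$; and in type $D_4$ a leg-transposition lands in the leg-symmetry group $\Aut(D_4) = S_3$, while a triple or quadruple of equal legs maps onto a subgroup of order divisible by $3$.

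With $φ_L$ identified, the values of $W_L$ are the standard foldings. A trivial $φ_L$ leaves $W_L$ of the original simply-laced type, which covers every $A_m$ with $m ≠ 3$, the all-distinct $A_3$ and $D_m$ leaves, and all of $E_6, E_7, E_8$ (for which \autoref{fig:symmetry-symmetry-possible} lists no root symmetry). The flip of $A_3$ folds $W(A_3) = S_4$ onto $W(C_2)$, yielding $C_2$ in the two $A_3$ rows. The fork-swap of $D_m$ folds $W(D_m)$ onto $W(B_{m-1})$, giving $B_{m-1}$ for the $D_{≥5}$ rows and $B_3$ for a single leg-transposition in $D_4$. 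Finally any order-three (hence any $S_3$) action on $D_4$ folds $W(D_4)$ onto $W(G_2)$, producing $G_2$ in the triple- and quadruple-equality $D_4$ rows. Reading these against the equality patterns reproduces every line of the statement.

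The genuine obstacle sits in the two situations the text already flags: $A_3$ with both opposite pairs identified, and $D_4$ with a symmetry involving all four outer legs. In these rows a primitive symmetry exchanges a vertex carrying an exceptional $\P^1$ with the affine vertex that carries none, so the induced automorphism of $E$ is invisible to the naive bijection of socles and must instead be extracted from the geometric analysis of \autoref{th:main-excfiber-action} completed in \autoref{sec:app-excfiber-all}. The delicate point is quantitative: for $A_3$ one must verify that both generators land on the same nontrivial flip, so that the image is $ℤ_2$ and the answer is $C_2$; and for the two-pair $D_4$ case one must verify that the two fork-swaps generate only an order-two subgroup of $S_3$, so that the answer is $B_3$ rather than $G_2$, whereas a genuine triple of equal legs does surject onto a $3$-element subgroup and yields $G_2$. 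Once \autoref{th:main-excfiber-action} certifies these images, the foldings above close the case list.
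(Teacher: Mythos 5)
Your proposal is correct and follows essentially the same route as the paper, which derives the theorem by combining \autoref{th:monodromy-fiberbundle}, \autoref{th:result-symmetry}, \autoref{th:main-excfiber-action} and Namikawa's invariant-subgroup formula, then reading off the standard foldings $A_3 \rightsquigarrow C_2$, $D_n \rightsquigarrow B_{n-1}$, $D_4 \rightsquigarrow G_2$. You correctly isolate the same delicate cases (symmetries involving the affine vertex in $A_3$ and $D_4$) that the paper defers to \autoref{sec:app-excfiber-all}, and your added observations on the surjectivity of $π_1(L) → Γ$ and the group-theoretic constraints on the image in $\Aut(E)$ are sound.
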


\section{Main result and further applications}
\label{sec:main-result}
Let us now generalize to the case $ α \notin Σ_{0, 0} $. We use the following simple lemma for the Namikawa-Weyl groups $ W $ of products and symmetric products of symplectic singularities, which is proven in \autoref{sec:app-products}.

\begin{lemma}
Let $ X, X_1, …, X_k $ be symplectic singularities with good $ ℂ^* $-action. Then $ W(X_1 × … × W_k) = W(X_1) × … × W(X_k) $ and $ W(S^n X) = W(X) $.
\end{lemma}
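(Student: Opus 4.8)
The plan is to reduce both identities to Namikawa's product formula $W = \prod_{B \in \mathcal{B}} \tilde W_B$ of \autoref{th:namikawa-weyl-explicit}, where $\mathcal{B}$ ranges over the codimension-2 symplectic leaves and $\tilde W_B$ is the $\varphi$-invariant part of the transverse Weyl group. First I would check that products and symmetric products of affine symplectic singularities with good $\mathbb{C}^\times$-action are again of this type, so that the formula applies. For a product this is immediate using the diagonal action, rescaling the individual $\mathbb{C}^\times$-factors so that all symplectic forms acquire a common weight; this rescaling is harmless since, as remarked after \autoref{th:namikawa-weyl-explicit}, $W$ is independent of the chosen $\mathbb{C}^\times$-action. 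For the symmetric product one uses that $S^n X = X^n / S_n$ is a finite quotient of a symplectic singularity by a group preserving the form. The whole argument then reduces to a bookkeeping of codimension-2 leaves together with the Dynkin automorphism $\varphi$ attached to each.

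For the product $X_1 \times \dots \times X_k$, the symplectic stratification is the product of the stratifications of the factors, so a leaf has the shape $B_1 \times \dots \times B_k$ and codimension $\sum_i \operatorname{codim}_{X_i} B_i$. As symplectic leaves have even codimension, a codimension-2 leaf must have exactly one factor equal to a codimension-2 leaf $B_j \subset X_j$ and all other factors equal to the open dense leaf $X_i^{\reg}$. Near a generic point the germ splits as a smooth symplectic factor times the transverse Kleinian slice of $B_j$ in $X_j$, so the transverse singularity and its minimal resolution are literally those of $B_j$. I would then argue that the monodromy localizes: writing $\pi_1(B) = \prod_{i \ne j} \pi_1(X_i^{\reg}) \times \pi_1(B_j)$, the family of transverse slices over $B$ is pulled back from $B_j$ under the projection $B \to B_j$, so the monodromy factors through $\pi_1(B_j)$ and loops in the smooth factors act trivially. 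Hence $\varphi$, and therefore $\tilde W_B$, coincide with those of $B_j$, and collecting factors gives $\prod_{B} \tilde W_B = \prod_j \prod_{B_j} \tilde W_{B_j} = \prod_j W(X_j)$.

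For the symmetric product I would classify the codimension-2 leaves of $S^n X$, whose generic points are unordered tuples with the points grouped by coincidence and by the $X$-leaf they occupy. The leaves in which all $n$ points remain distinct and exactly one of them lies in a codimension-2 leaf $B$ of $X$ (the rest in $X^{\reg}$) are again of codimension $2$, have transverse singularity equal to that of $B$, and carry the same $\varphi$; these are in bijection with the codimension-2 leaves of $X$ and contribute exactly $\prod_B \tilde W_B = W(X)$. The main obstacle is the control of the \emph{diagonal} leaves, where two or more points collide, and I expect this to be the only genuinely delicate point. A short computation in center-of-mass coordinates shows that the locus where two points collide at a smooth value of $X$ has codimension $\dim X$, with transverse slice $\mathbb{C}^{\dim X}/\{\pm 1\}$ (the $A_1$ Kleinian singularity precisely when $\dim X = 2$). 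Consequently these collision leaves have codimension $\geq 4$ as soon as $\dim X \geq 4$ and are invisible to the codimension-2 count, which closes the argument in that range. The boundary case $\dim X = 2$, where the collision leaf becomes two-dimensional and contributes a transverse $A_1$, is exactly the step that requires the most care and must be inspected by hand.
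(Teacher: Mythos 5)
Your treatment of the product case and of $S^n X$ for $\dim X \ge 4$ follows essentially the same route as the paper: the codimension-2 leaves of $X_1 \times \dots \times X_k$ are exactly the products with a single codimension-2 factor, the transverse slice and its resolution are pulled back from that factor, so the monodromy localizes and the factors of Namikawa's formula regroup; for $S^n X$ the off-diagonal leaves reproduce those of $X$ and the collision loci have codimension $\dim X \ge 4$, hence drop out. All of this matches the paper's argument in \autoref{sec:app-products}.

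The genuine gap is the one you yourself flag: the collision leaf of $S^n X$ when $\dim X = 2$. This is not a removable technicality that can be "inspected by hand" within your framework — on the contrary, your own computation closes the door. You correctly identify the transverse slice of that leaf as $\mathbb{C}^2/\{\pm 1\}$, i.e.\ an $A_1$ Kleinian singularity. The $A_1$ Dynkin diagram admits no nontrivial automorphism, so the Dynkin automorphism $\varphi$ attached to this leaf is forced to be trivial and \autoref{th:namikawa-weyl-explicit} assigns it the full factor $\tilde W_B = W(A_1) = \mathbb{Z}/2$. Your reduction to Namikawa's product formula therefore yields $W(S^n X) = W(X) \times \mathbb{Z}/2$ for $\dim X = 2$ and $n \ge 2$, not $W(X)$; no amount of care in analyzing the monodromy can suppress a leaf whose contribution is nontrivial even with trivial monodromy. (The paper's own proof dismisses this leaf with the phrase "this gives no additional monodromy", which addresses only the triviality of $\varphi$ and not the presence of the factor $\tilde W_B$ in the product over leaves; so the tension you have uncovered is real and sits in the statement itself, not merely in your write-up.) To make your proof honest you must either restrict the symmetric-product claim to $\dim X \ge 4$, or record the extra $\mathbb{Z}/2$ explicitly in the $\dim X = 2$ case.
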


\begin{theorem}
Let $ (Q, α) $ be a quiver setting with canonical decomposition $ α = n_1 α_1 + … + n_k α_k $. Assume that for every $ i = 1, …, k $, the dimension vector $ α_i $ is indivisible or satisfies $ (\gcd(α), p(\gcd(α)^{-1} α)) = (2, 2) $. Then the Namikawa-Weyl group of $ \M(Q, α) $ is given by
\begin{equation*}
W(\M(Q, α)) = W(\M(Q, α_1)) × … × W(\M(Q, α_k)).
\end{equation*}
Here $ W(\M(Q, α_i)) $ denotes the Namikawa-Weyl group of $ \M(Q, α_i) $, which can be determined according to \autoref{th:monodromy-result-th}.
\end{theorem}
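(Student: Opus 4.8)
\section*{Proof proposal}

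The plan is to reduce the statement to the case $ α ∈ Σ_{0,0} $ already settled in \autoref{th:monodromy-result-th}, exploiting the multiplicative behaviour of Namikawa-Weyl groups under products and symmetric products. The two inputs are the canonical decomposition isomorphism of symplectic varieties recalled above,
\begin{equation*}
\M(Q, α) ≅ S^{n_1} \M(Q, α_1) × … × S^{n_k} \M(Q, α_k),
\end{equation*}
together with the multiplicativity lemma stated immediately before this theorem.

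First I would check that the Namikawa-Weyl group is meaningful on each factor. Since $ α = n_1 α_1 + … + n_k α_k $ is the canonical decomposition, every $ α_i $ lies in $ Σ_{0,0} $, and by hypothesis every $ α_i $ is indivisible or satisfies the $ (2,2) $ condition. By the symplectic resolution criterion recalled above, each $ \M(Q, α_i) $ then admits a projective symplectic resolution, so Namikawa's setup applies and $ W(\M(Q, α_i)) $ is defined; it is precisely the group determined in \autoref{th:monodromy-result-th}. The same hypotheses show that $ \M(Q, α) $ itself admits a symplectic resolution, so the left-hand side is well-posed. I would then feed the canonical decomposition isomorphism into the multiplicativity lemma to obtain
\begin{equation*}
W(\M(Q, α)) = \prod_{i=1}^k W\big(S^{n_i} \M(Q, α_i)\big) = \prod_{i=1}^k W(\M(Q, α_i)),
\end{equation*}
where the second equality is the identity $ W(S^n X) = W(X) $ supplied by the lemma.

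The only genuine point requiring care --- and the step I expect to be the main obstacle --- is verifying that the multiplicativity lemma is actually applicable to the canonical decomposition isomorphism, i.e.\ that both sides are symplectic singularities carrying good $ ℂ^× $-actions and that the isomorphism can be taken to be equivariant for a suitable such action. Each quiver variety carries a good $ ℂ^× $-action induced from the scaling action on $ \Rep(\qQ, α) $, and products and symmetric products inherit good $ ℂ^× $-actions; what must be confirmed is that the canonical isomorphism can be arranged to intertwine these. Here it is decisive that, as noted after \autoref{th:namikawa-weyl-explicit}, the Namikawa-Weyl group is independent of the chosen good $ ℂ^× $-action, so it suffices to exhibit some good action on each side making the isomorphism equivariant, rather than to match a prescribed one. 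Granting this routine but necessary verification, the displayed chain of equalities completes the proof.
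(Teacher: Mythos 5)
Your proposal is correct and follows essentially the same route as the paper, which derives the theorem directly by combining the canonical decomposition isomorphism $\M(Q, α) ≅ S^{n_1} \M(Q, α_1) × … × S^{n_k} \M(Q, α_k)$ with the product/symmetric-product lemma proved in \autoref{sec:app-products}. Your added remark about checking equivariance of the good $ℂ^×$-actions, and the observation that $W$ is independent of the chosen action, is a sensible point of care that the paper leaves implicit.
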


While this paper focuses on quiver varieties, the same strategy extends naturally to moduli spaces of representations of Calabi-Yau-2 algebras. Specifically, for a Calabi-Yau-2 algebra $ Π $ and a dimension vector $ α $, consider a moduli space $ \M(Π, α) $ of $ α $-dimensional representations. If $ \M(Π, α) $ is a symplectic singularity, the codimension-2 strata take the form
\begin{equation*}
L = (\M^o (Π, β_1)_{c_1} × … × \M^o (Π, β_k)_{c_k}) / Γ.
\end{equation*}
Here $ β_1, …, β_k $ are the local dimension vectors. The letters $ c_1, …, c_k $ denote additional dimensional data which serve to specify individual connected components of $ \M^o (Π, β_k) $. The letter $ Γ $ denotes the permutation group which fixes the dimensional inputs $ (β_i, c_i) $. The monodromy of the stratum $ L $ is then equal to the action of $ Γ $ on the Kleinian exceptional fiber. This strategy determines the Namikawa-Weyl group for $ \M(Π, α) $.

\appendix

\section{Examples}
In this section, we present several examples of quiver settings $ (Q, α) $ together with their associated Namikawa-Weyl group. In these examples, we aim to illustrate how the Namikawa-Weyl group can be deduced from Namikawa's semi-explicit description, how to immediately read off the Namikawa-Weyl group from our main result and why both agree.

\addtocontents{toc}{\SkipTocEntry}
\subsection{An example}
In this section, we study an example quiver variety $ X = \M(Q, α) $ and explain how to find its Namikawa-Weyl group. We follow the semi-explicit description laid out by Namikawa for finding the Namikawa-Weyl group of $ X $ via symplectic resolutions:
\begin{enumerate}
\item Find the symplectic codimension-2 leaves $ L_1, …, L_k $ of $ X $.
\item Construct a symplectic resolution $ π: Y → X $.
\item Identify the monodromy of the fiber bundles $ π: π^{-1} (L_i) → L_i $.
\item Define $ W_i $ as the subgroup of the classical Weyl group invariant under the monodromy.
\item The Namikawa-Weyl group is then $ W = W_1 × … × W_k $.
\end{enumerate}
For the specific example $ X = \M(Q, α) $, we perform these five steps as follows: First, we identify the symplectic codimension-2 leaves by means of isotropic decompositions of $ α $. Second, we provide an explicit symplectic resolution $ π: \M_θ (Q, α) → \M(Q, α) $ by choosing a stability parameter $ θ $. Third, we analyse the representations in the exceptional fiber $ π^{-1} (L) $ over every codimension-2 leaf $ L $. The fourth and the fifth step are automatic and we find the Namikawa-Weyl group of $ \M(Q, α) $.

The starting point for our example is the double quiver $ \qQ $ with dimension vector $ α = (1, 2, 1) $ depicted in \autoref{fig:intro-example-quiver}. We label the three vertices as 1, 2 and 3, as depicted in the figure. There is no canonical choice of quiver $ Q $ which produces this double quiver, but for any choice of $ Q $ we see that $ Q $ has an indecomposable representation of dimension $ α $. This means that $ α $ lies in the set $ Σ_{0, 0} $ of “good dimension vectors”. This makes it easy to identify the symplectic codimension-2 leaves according to the procedure of Bellamy and Schedler \cite{bellamy-schedler}.

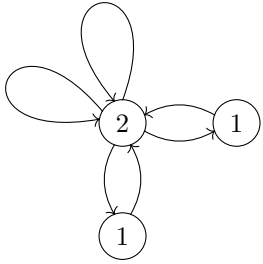
\begin{figure}
\centering
\begin{tikzpicture}
\path (0, 0) node[circle, draw] (A) {1} (0, 1.5) node[circle, draw] (B) {2} (1.5, 1.5) node[circle, draw] (C) {1};
\path[draw, ->, bend right] ($ (A) + (70:0.3) $) to node[midway, right] {} ($ (B) + (290:0.3) $);
\path[draw, <-, bend left] ($ (A) + (110:0.3) $) to node[midway, left] {}  ($ (B) + (250:0.3) $);
\path[draw, ->, bend right] ($ (B) + (340:0.3) $) to node[midway, below] {} ($ (C) + (200:0.3) $);
\path[draw, <-, bend left] ($ (B) + (20:0.3) $) to node[midway, above] {} ($ (C) + (160:0.3) $);
\path[draw, ->, bend right=120, looseness=50] ($ (B) + (90:0.3) $) to node[pos=0.4, right] {} ($ (B) + (110:0.3) $);
\path[draw, ->, bend right=120, looseness=50] ($ (B) + (150:0.3) $) to node[pos=0.4, left] {} ($ (B) + (170:0.3) $);
\end{tikzpicture}
\caption{This figure depicts the double quiver $ \qQ $ which we study as an example.}
\label{fig:intro-example-quiver}
\end{figure}

\paragraph*{Step 1}
For the first step, we explain how to find the symplectic codimension-2 leaves in $ \M(Q, α) $. To accomplish this, we first calculate the Cartan matrix of $ \qQ $, which reads
\begin{align*}
(-, -) = \begin{pmatrix}
2 & -1 & 0 \\
-1 & 0 & -1 \\
0 & -1 & 2
\end{pmatrix}.
\end{align*}
According to Bellamy and Schedler, we find the codimension-2 leaves in a purely combinatorial fashion by searching for “isotropic decompositions” of $ α $. These are a certain class of decompositions $ α = n_1 β_1 + … + n_k β_k $ whose most important requirement is that their “local quiver” is a Kleinian quiver (extended Dynkin quiver).

We shall here provide the decomposition $ α = e_1 + e_2 + e_2 + e_3 $ and explain that it is isotropic. We start by regarding the local quiver of the decomposition. By definition, it has four vertices, corresponding to the four terms in the decomposition. The vertices are joined by arrows whose quantity is given by the negative of the Cartan pairing. We calculate $ (e_1, e_2) = -1 $ and $ (e_1, e_3) = 0 $ and $ (e_2, e_3) = -1 $. The local quiver is depicted in \autoref{fig:intro-example-local} and we recognize it as the Kleinian $ A_3 $ quiver. Following Bellamy and Schedler, we conclude that $ α = e_1 + e_2 + e_2 + e_3 $ is indeed an isotropic decomposition. One can show that this is the only isotropic decomposition of $ α $. In other words, the symplectic singularity $ \M(Q, α) $ has only one codimension-2 leaf, which we denote by $ L $.

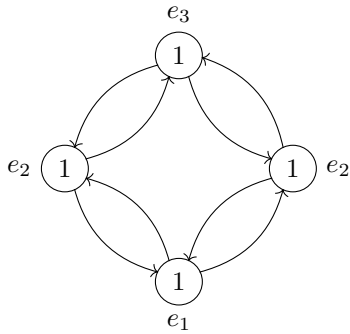
\begin{figure}
\centering
\begin{tikzpicture}
\path (0, 0) node[circle, draw] (A) {1} (1.5, 1.5) node[circle, draw] (B) {1} (0, 3) node[circle, draw] (C) {1} (-1.5, 1.5) node[circle, draw] (D) {1};
\path[draw, ->, bend right] ($ (A) + (25:0.3) $) to ($ (B) + (245:0.3) $);
\path[draw, <-, bend left] ($ (A) + (65:0.3) $) to ($ (B) + (205:0.3) $);
\path[draw, ->, bend right] ($ (B) + (115:0.3) $) to ($ (C) + (355:0.3) $);
\path[draw, <-, bend left] ($ (B) + (155:0.3) $) to ($ (C) + (295:0.3) $);
\path[draw, ->, bend right] ($ (C) + (205:0.3) $) to ($ (D) + (65:0.3) $);
\path[draw, <-, bend left] ($ (C) + (245:0.3) $) to ($ (D) + (25:0.3) $);
\path[draw, ->, bend right] ($ (D) + (295:0.3) $) to ($ (A) + (155:0.3) $);
\path[draw, <-, bend left] ($ (D) + (335:0.3) $) to ($ (A) + (115:0.3) $);
\path (A.south) node[below] {$ e_1 $} (B.east) node[right] {$ e_2 $} (C.north) node[above] {$ e_3 $} (D.west) node[left] {$ e_2 $};
\end{tikzpicture}
\caption{This figure depicts the local quiver associated to the decomposition $ α = e_1 + e_2 + e_2 + e_3 $. Evidently, the local quiver is isomorphic to the Kleinian $ A_3 $ quiver.}
\label{fig:intro-example-local}
\end{figure}

Let us describe the representations in the single codimension-2 leaf $ L $ explicitly. Following Bellamy and Schedler, these representations are precisely those which can be written in the form $ S_1 ⊕ S_2 ⊕ S_3 ⊕ S_4 $, where $ S_1 ∈ \M^{\simp} (Q, e_1) $, $ S_2 ∈ \M^{\simp} (Q, e_2) $, $ S_3 ∈ \M^{\simp} (Q, e_2) $ and $ S_4 ∈ \M^{\simp} (Q, e_3) $ with $ S_2 \not\cong S_3 $ are simple representations of dimensions $ e_1 $, $ e_2 $, $ e_2 $ and $ e_3 $, respectively. Following this characterization, the representations on the leaf $ L $ are depicted in \autoref{fig:intro-example-leaf}.

\begin{figure}
\centering
\begin{tikzpicture}
\path (0, 0) node[circle, draw] (A) {1} (0, 1.5) node[circle, draw] (B) {2} (1.5, 1.5) node[circle, draw] (C) {1};
\path[draw, ->, bend right] ($ (A) + (70:0.3) $) to node[midway, right] {$ 0 $} ($ (B) + (290:0.3) $);
\path[draw, <-, bend left] ($ (A) + (110:0.3) $) to node[midway, left] {$ 0 $}  ($ (B) + (250:0.3) $);
\path[draw, ->, bend right] ($ (B) + (340:0.3) $) to node[midway, below] {$ 0 $} ($ (C) + (200:0.3) $);
\path[draw, <-, bend left] ($ (B) + (20:0.3) $) to node[midway, above] {$ 0 $} ($ (C) + (160:0.3) $);
\path[draw, ->, bend right=120, looseness=50] ($ (B) + (90:0.3) $) to node[pos=0.4, right] {$ \begin{pmatrix} κ_1 & 0 \\ 0 & κ_2 \end{pmatrix} $} ($ (B) + (110:0.3) $);
\path[draw, ->, bend right=120, looseness=50] ($ (B) + (150:0.3) $) to node[pos=0.4, left] {$ \begin{pmatrix} κ_1^* & 0 \\ 0 & κ_2^* \end{pmatrix} $} ($ (B) + (170:0.3) $);
\end{tikzpicture}
\caption{This figure depicts the representations in the leaf $ L $. They are direct sums of representations of dimension vectors $ e_1 $, $ e_2 $, $ e_2 $ and $ e_3 $.}
\label{fig:intro-example-leaf}
\end{figure}
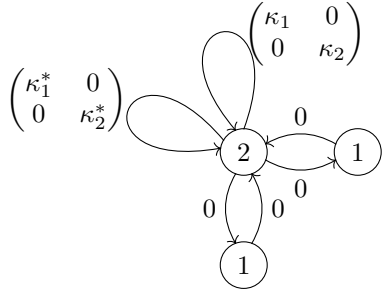

\paragraph*{Step 2}
For the second step, let us explain how to find a symplectic resolution for $ \M(Q, α) $. Geometric Invariant Theory tells us that we can find such a resolution by picking the quiver variety $ \M_θ (Q, α) $ where $ θ ∈ ℤ^{Q_0} $ is a generic stability parameter. The points in $ \M_θ (Q, α) $ are in one-to-one correspondence with $ θ $-polystable representations up to isomorphism. The resolution map $ π: \M_θ (Q, α) → \M(Q, α) $ is given by sending a $ θ $-polystable representation to its semisimplification.

We pick the stability parameter $ θ = (-1, -1, +3) $ and aim to identify representations in the fiber $ π^{-1} (L) $. By the abstract theory, we already know that for every $ x ∈ L $ the fiber $ π^{-1} (x) $ consists of three projective lines, which intersect according to the shape of the $ A_3 $ Dynkin diagram. By manual work, we find two of the three one-parameter families very quickly, which we have depicted in \autoref{fig:intro-example-fiber}. Inspection shows that these two families correspond to the two opposite exterior nodes of the $ A_3 $ Dynkin diagram.

\begin{figure}
\begin{tikzpicture}
\path[fill] (0, 0) coordinate (N1) circle[radius=0.1] (4, 0) coordinate (N2) circle[radius=0.1] (8, 0) coordinate (N3) circle[radius=0.1];
\path[draw] ($ (N1) + (right:0.3) $) -- ($ (N2) + (left:0.3) $);
\path[draw] ($ (N2) + (right:0.3) $) -- ($ (N3) + (left:0.3) $);
\begin{scope}[shift={(0, -5.5)}]
\path (0, 0) node[circle, draw] (A) {1} (0, 2.5) node[circle, draw] (B) {2} (2.5, 2.5) node[circle, draw] (C) {1};
\path[draw, ->, bend right] ($ (A) + (70:0.3) $) to node[pos=0.4, right] {\small $ \begin{pmatrix}1 \\ 0 \end{pmatrix} $} ($ (B) + (290:0.3) $);
\path[draw, <-, bend left] ($ (A) + (110:0.3) $) to node[midway, left] {\small $ \begin{pmatrix} 0 & κ_1^* - κ_2^* \end{pmatrix} $}  ($ (B) + (250:0.3) $);
\path[draw, ->, bend right] ($ (B) + (340:0.3) $) to node[midway, below] {$ \begin{pmatrix} * & 1 \end{pmatrix} $} ($ (C) + (200:0.3) $);
\path[draw, <-, bend left] ($ (B) + (20:0.3) $) to node[midway, above] {$ 0 $} ($ (C) + (160:0.3) $);
\path[draw, ->, bend right=120, looseness=50] ($ (B) + (90:0.3) $) to node[pos=0.4, right] {$ \begin{pmatrix} κ_1 & 1 \\ 0 & κ_2 \end{pmatrix} $} ($ (B) + (110:0.3) $);
\path[draw, ->, bend right=120, looseness=50] ($ (B) + (150:0.3) $) to node[pos=0.4, left] {$ \begin{pmatrix} κ_1^* & 0 \\ 0 & κ_2^* \end{pmatrix} $} ($ (B) + (170:0.3) $);
\end{scope}
\begin{scope}[shift={(8, -5.5)}]
\path (0, 0) node[circle, draw] (A) {1} (0, 2.5) node[circle, draw] (B) {2} (2.5, 2.5) node[circle, draw] (C) {1};
\path[draw, ->, bend right] ($ (A) + (70:0.3) $) to node[pos=0.4, right] {\small $ \begin{pmatrix} 0 \\ 1 \end{pmatrix} $} ($ (B) + (290:0.3) $);
\path[draw, <-, bend left] ($ (A) + (110:0.3) $) to node[midway, left] {\small $ \begin{pmatrix} 0 & κ_2^* - κ_1^* \end{pmatrix} $}  ($ (B) + (250:0.3) $);
\path[draw, ->, bend right] ($ (B) + (340:0.3) $) to node[midway, below] {$ \begin{pmatrix} * & 1 \end{pmatrix} $} ($ (C) + (200:0.3) $);
\path[draw, <-, bend left] ($ (B) + (20:0.3) $) to node[midway, above] {$ 0 $} ($ (C) + (160:0.3) $);
\path[draw, ->, bend right=120, looseness=50] ($ (B) + (90:0.3) $) to node[pos=0.4, right] {$ \begin{pmatrix} κ_1 & 0 \\ 1 & κ_2 \end{pmatrix} $} ($ (B) + (110:0.3) $);
\path[draw, ->, bend right=120, looseness=50] ($ (B) + (150:0.3) $) to node[pos=0.4, left] {$ \begin{pmatrix} κ_1^* & 0 \\ 0 & κ_2^* \end{pmatrix} $} ($ (B) + (170:0.3) $);
\end{scope}
\end{tikzpicture}
\caption{This figure depicts two of the three one-parameter families of $ θ $-stable representations which lie above the codimension-2 leaf $ L $. It is easily verified that the depicted representations are indeed all non-isomorphic and semisimplify to the representation given by $ S_2 = (κ_1, κ_1^*) $ and $ S_3 = (κ_2, κ_2^*) $. The thick lines visualize the $ A_3 $ Dynkin diagram. The two one-parameter families precisely correspond to the two outer vertices of the Dynkin diagram.}
\label{fig:intro-example-fiber}
\end{figure}
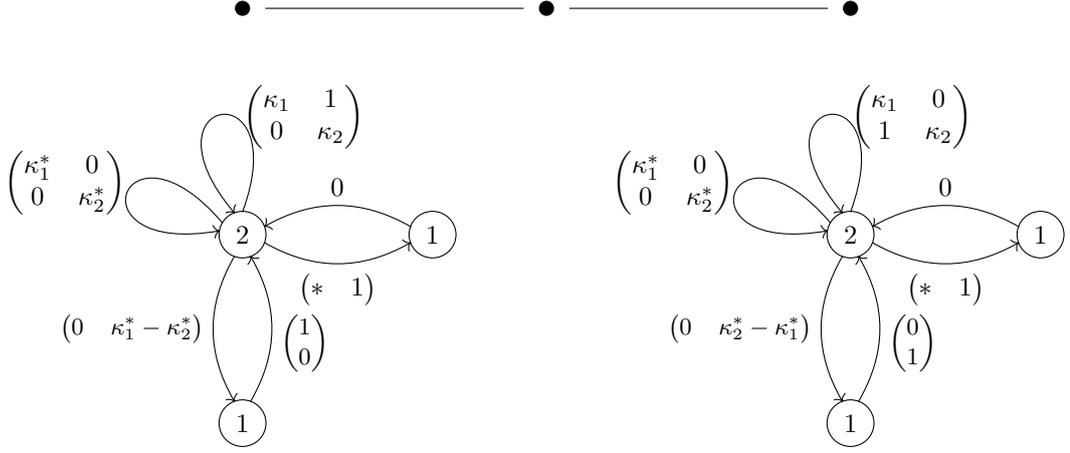

Let us give these representations a name. Let us write $ S_2 = (κ_1, κ_1^*) ∈ \M^{\simp} (Q, e_2) $ and $ S_3 = (κ_2, κ_2^*) ∈ \M^{\simp} (Q, e_2) $ with $ (κ_1, κ_1^*) ≠ (κ_2, κ_2^*) $. Then we denote the representation on the left by $ R_{κ_1, κ_1^*, κ_2, κ_2^*}^{(1)} $ and the right representation by $ R_{κ_1, κ_1^*, κ_2, κ_2^*}^{(3)} $. In this notation, we ignore the paramater denoted by $ * $. It is important to note that these representations indeed satisfy the preprojective condition, are $ θ $-stable and lie above the leaf. Moreover, representatives from the two families of representations are indeed non-isomorphic.

\paragraph*{Step 3}
For the third step, let us prove that we can pass from $ R_{κ_1, κ_1^*, κ_2, κ_2^*}^{(1)} $ to $ R_{κ_1, κ_1^*, κ_2, κ_2^*}^{(3)} $ by walking around in the leaf. Regard the space $ (ℂ^2 × ℂ^2) \setminus Δ $, where $ Δ ⊂ ℂ^2 × ℂ^2 $ denotes the diagonal. This space is not the same as the leaf, in fact the leaf is its quotient by $ C_2 $. In each case, the space $ (ℂ^2 × ℂ^2) \setminus Δ $ is connected and we pick a path $ γ: [0, 1] → (ℂ^2 × ℂ^2) \setminus Δ $ such that $ γ(0) = (κ_1, κ_1^*, κ_2, κ_2) $ and $ γ(1) = (κ_2, κ_2^*, κ_1, κ_1^*) $. Then the tracing the representation $ R_{κ_1, κ_1^*, κ_2, κ_2^*}^{(1)} $ along this path, we end up at $ R_{κ_2, κ_2^*, κ_1, κ_1^*}^{(1)} $. Note that we arrive at the same base point in the leaf again. Moreover, we easily convince ourselves that $ R_{κ_2, κ_2^*, κ_1, κ_1^*}^{(1)} $ is exactly the same as $ R_{κ_1, κ_1^*, κ_2, κ_2^*}^{(3)} $. This proves that the two outside vertices of the $ A_3 $ Dynkin diagram are indeed connected via the leaf.

\paragraph*{Step 4 and 5}
Let us draw the conclusions for the Namikawa-Weyl group of $ \M(Q, α) $. We have seen that for every point $ x ∈ L $, the two outer projective lines in $ π^{-1} (x) $ are connected to each other when we walk around in $ L $. This monodromy constitutes the nontrivial Dynkin automorphism $ τ $ of the $ A_3 $ Dynkin diagram. Recall that the classical Weyl group of $ A_3 $ type is $ W = S_6 $. Its invariant part $ W_1 = W^τ $ under the Dynkin automorphism is $ W_1 = S_2 \ltimes (ℤ/2ℤ)^2 $, also known as the Weyl group of $ C_2 $ type.

\paragraph*{Comparison with the main result}
We have just seen that the Namikawa-Weyl group of $ \M(Q, α) $ is the Weyl group of $ C_2 $ type. Let us compare this with our main result which determines the Namikawa-Weyl group explicitly. Indeed, our main result claims that the Dynkin graph automorphism associated with the single codimension-2 leaf is the nontrivial automorphism of the $ A_3 $-type Dynkin diagram, given that the labeled local quiver $ (Q', α', (β_1, …, β_4)) $ has an automorphism automorphism group of order two. This precisely agrees with the explicit calculation which we performed according to Namikawa's procedure.

\section{Properties of the local-to-global functor}
\label{sec:functor}
In this section, we establish basic properties of the functor $ F $ which we constructed in \autoref{sec:main-functor}. In particular, we finish the proof of \autoref{th:result-functor} and \autoref{th:main-functor-stability}.

\addtocontents{toc}{\SkipTocEntry}
\subsection{Honest modules}
\label{sec:functor-honest}
In this section, we recapitulate basic properties of honest modules.

\begin{lemma}
\label{th:functor-construction-honest}
Let $ Q $ be a quiver and let $ S_1, …, S_k ∈ \Mod_{Π_Q} $ be a sequence of modules. Then we have the following statements:
\begin{itemize}
\item Any twisted complex without shifts $ T = (M_1 ⊕ … ⊕ M_k, δ) ∈ \Tw\Mod_{Π_Q} $ is an honest module.
\item If $ S_1, …, S_k $ are simple, then any honest module in $ \Tw\{S_1, …, S_k\} $ is isomorphic to a shift-free twisted complex of $ S_1, …, S_k $.
\item If $ S_1, …, S_k $ are simple and $ X ∈ \Tw\{S_1, …, S_k\} $ is an honest module, and $ Y ⊂ X $ is a submodule, then $ Y $ can be realized as a subcomplex of $ X $.
\end{itemize}
\end{lemma}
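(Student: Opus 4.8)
The plan is to carry out all three bullets inside the standard identification of $\HTw\Mod_{\Pi_Q}$ with the bounded derived category of finite-dimensional $\Pi_Q$-modules: the pretriangulated hull $\Tw\Mod_{\Pi_Q}$ of the $\Ext$-category is a model for this derived category, single modules placed in shift $0$ correspond to objects of the standard heart, and a twisted complex is an \emph{honest module} precisely when it is isomorphic, in $\HTw\Mod_{\Pi_Q}$, to an object of that heart. Two general facts will be used throughout. First, the heart is closed under extensions, so any iterated extension of modules is again a module. Second, since $\{S_1, \ldots, S_k\}$ is a full $A_\infty$-subcategory of $\Mod_{\Pi_Q}$, its twisted completion embeds fully faithfully, $\HTw\{S_1, \ldots, S_k\} \hookrightarrow \HTw\Mod_{\Pi_Q}$; hence an isomorphism in the large category between two objects of the small one is already an isomorphism in the small category. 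Formality of the simples (the vanishing $\mu^{\geq 3} = 0$) lets us translate freely between filtered modules and shift-free twisted complexes, since the Maurer–Cartan equation then collapses to $\mu^2(\delta, \delta) = 0$.

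For the first bullet, write $T = (M_1 \oplus \ldots \oplus M_k, \delta)$ with $\delta$ strictly upper triangular and all entries $\delta_{ij} \in \Ext^1(M_j, M_i)$ of degree $1$. The upper-triangular shape yields a filtration $0 = T_0 \subset T_1 \subset \ldots \subset T_k = T$ sitting in exact triangles $T_{l-1} \to T_l \to M_l \to T_{l-1}[1]$. Since the $M_l$ are shift-free, each subquotient is a genuine module; starting from the module $T_1 = M_1$ and noting that, once $T_{l-1}$ is a module, the connecting morphism lies in $\Ext^1(M_l, T_{l-1})$ and its cone $T_l$ is the corresponding extension, extension-closedness of the heart gives by induction that every $T_l$, and in particular $T = T_k$, is a module. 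Hence $T$ is honest.

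For the second bullet, let $X \in \Tw\{S_1, \ldots, S_k\}$ be honest, realized by a module $M$. Passing to $K_0$, the class $[M]$ is a $\mathbb{Z}$-combination of the $[S_i]$; since distinct simples are linearly independent in $K_0$ and the multiplicity of a simple in a module is non-negative, $M$ has all its composition factors among $S_1, \ldots, S_k$. Choosing a composition series of $M$ and assembling its extension data into a single strictly upper-triangular degree-$1$ element $\delta$ produces a shift-free twisted complex $T = (\bigoplus_l S_{f(l)}, \delta)$ of the simples whose realization is $M$; here formality guarantees that the only constraint on $\delta$ is $\mu^2(\delta, \delta) = 0$, which is exactly the compatibility of the iterated extension, and that $T$ realizes $M$ itself with no higher correction. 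Now $T$ and $X$ are both objects of $\HTw\{S_1, \ldots, S_k\}$ that become isomorphic to $M$ in $\HTw\Mod_{\Pi_Q}$, so by full faithfulness they are isomorphic in $\HTw\{S_1, \ldots, S_k\}$, with $T$ shift-free as required. This passage from a composition series to the twisted complex $\delta$ is the technical heart of the lemma: tracking the signs and verifying the Maurer–Cartan equation is the one step that genuinely needs attention, and it is precisely what formality tames.

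For the third bullet, let $X$ be honest with realization $M$ and let $Y \subset X$ be a submodule, i.e. a submodule $Y \subseteq M$; its composition factors again lie among $S_1, \ldots, S_k$. Refine a composition series of $M$ so that it passes through $Y$, and build, exactly as in the previous paragraph, the associated shift-free twisted complex $T \cong X$. The summands coming from the composition factors of the sub $Y$ then form a $\delta$-stable sub-direct-sum $T_Y$ — a genuine subcomplex of $T$ — whose realization is $Y$, and transporting along the isomorphism $T \cong X$ exhibits $Y$ as a subcomplex of the chosen model of $X$. The only point needing care is that a filtration-step submodule corresponds to a $\delta$-invariant subset of summands, which is immediate from the upper-triangular construction of $\delta$.
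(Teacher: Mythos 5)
Your proof is correct, and for the first bullet it is essentially the paper's argument (induction along the upper-triangular filtration, using that an extension of modules is a module). For the second bullet you take a genuinely different route. The paper works directly with the homotopy data of the quasi-isomorphism between $ X $ and the shift-free complex $ Y $ built on the composition factors of $ M $: it splits $ X $ into its shift-free part $ X_1 $ and its shifted part $ X_2 $, kills the cross-term $ \mu^2(f_2, g_2) $ using that the hom spaces are concentrated in nonnegative degrees, and then observes that the surviving degree-zero components between simples are scalar matrices composing to the identity, hence invertible; this yields $ X \cong X_1 $ and, as a byproduct, that the composition factors lie among $ S_1, \dots, S_k $ (no degree-zero maps between non-isomorphic simples). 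You instead identify the composition factors by a $ K_0 $ argument and transfer the isomorphism $ T \cong M \cong X $ back into $ \HTw\{S_1, \dots, S_k\} $ via full faithfulness of the embedding. Your route is cleaner categorically but outsources more to standard facts: the identification of $ \HTw\Mod_{Π_Q} $ with the bounded derived category, linear independence of simple classes in $ K_0 $, and --- the step you rightly flag as the technical heart --- the realization of a filtered module as a shift-free twisted complex on its subquotients, which both proofs ultimately take as known. Note also that your appeal to formality ($ \mu^{\geq 3} = 0 $) is not strictly needed for shift-freeness and silently restricts to simples of a non-Dynkin $ Π_Q $, whereas the lemma is stated for arbitrary $ Q $; the paper's argument avoids formality entirely. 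For the third bullet you supply an actual argument --- refining the composition series through $ Y $ and observing that the initial block of summands is $ δ $-stable --- where the paper only says ``similar''; that is a welcome addition, and your $ δ $-stability claim is correct given the upper-triangular convention.
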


\begin{proof}
Let us comment on all three statements after each other.

Regard the first statement. This is in fact a foundational property of $ A_∞ $-theory. In its simplest form, we have $ k = 2 $ and
\begin{equation*}
δ = \begin{pmatrix} 0 & e \\ 0 & 0 \end{pmatrix}.
\end{equation*}
Then $ T = (M_1 ⊕ M_2, δ) $ is simply quasi-isomorphic to the extension of $ M_2 $ by $ M_1 $ given by $ δ $. In particular, $ T $ is an honest module. The general case proceeds by induction.

Regard the second statement. Let $ X ∈ \Tw\{S_1, …, S_k\} $ be an honest module, quasi-isomorphic to a module $ M ∈ \Mod_{Π_Q} $. Let $ M_1, …, M_l $ be the simple composition factors of $ M $. Then $ M $ is quasi-isomorphic to a shift-free twisted complex $ Y = (M_1 ⊕ … ⊕ M_l, δ) $. In particular, $ X $ and $ Y $ are quasi-isomorphic and we have maps $ f: X → Y $ and $ g: Y → X $ of degree zero and maps $ h_X: X → X $ and $ h_Y: Y → Y $ of degree $ -1 $ such that $ μ^2 (f, g) = \id_Y + μ^1 (h_Y) $ and $ μ^2 (g, f) = \id_X + μ^1 (h_X) $. Since $ Y $ is shift-free, we necessarily have $ h_Y = 0 $. Thanks to the twisted complex formalism, the maps $ f $ and $ g $ are matrices of morphisms between the components of $ X $ and $ Y $, and $ μ^2 (f, g) $ is given by matrix multiplication. Let us split the twisted complex $ X $ into a shift-free part $ Y_1 $ and a shifted part $ Y_2 $:
\begin{equation*}
X = \big(X_1 ⊕ X_2, \begin{pmatrix} δ_{11} & δ_{12} \\ δ_{21} & δ_{22} \end{pmatrix} \big).
\end{equation*}
Let us also split $ f = (f_1 ~~ f_2) $ and $ g = (g_1, g_2) $ along this decomposition. Then we the composition $ μ^2 (f_2, g_2): Y → X_2 → Y $ vanishes, given that a morphism $ M_i → S_j [s] $ and $ S_j [s] → M_l $ can be nonzero but not both since the hom spaces of $ \Mod_{Π_Q} $ are concentrated in nonnegative degrees. We conclude that
\begin{equation*}
\id_Y = μ^2 (f, g) = μ^2 (f_1, g_1).
\end{equation*}
Since $ S_1, …, S_k $ and $ M_1, …, M_l $ are simples, the matrices $ f_1 $ and $ g_1 $ are scalar matrices, and we conclude they have equal size and are inverse to each other. We conclude that $ f_1: X_1 → Y $ and $ g_1: Y → X_1 $ provide an isomorphism between $ X_1 $ and $ Y $. Moreover, since there are no degree zero morphisms between non-isomorphic simples, we conclude that every summand $ M_i $ lies in $ \{S_1, …, S_k\} $ and hence $ X_1 $ is a shift-free twisted complex of $ S_1, …, S_k $. Finally, since $ X $ was assumed to be quasi-isomorphic to $ Y $, we conclude that $ X $ is quasi-isomorphic to $ X_1 $, a shift-free twisted complex of $ S_1, …, S_k $. This proves the second statement.

The proof of the third statement is similar. This finishes the proof.
\end{proof}

\addtocontents{toc}{\SkipTocEntry}
\subsection{The functor construction}
\label{sec:functor-construction}
In this section, we provide the details of the functor construction and in particular provide the proof of \autoref{th:result-functor}.

\begin{proof}[Proof of \autoref{th:result-functor}]
Let us start by constructing the functor $ F $. Indeed, both categories $ \{\tilde S_1, …, \tilde S_k\} ⊂ \Mod_{Π_{Q'}} $ and $ \{S_1, …, S_k\} ⊂ \Mod_{Π_Q} $ share exactly the same $ A_∞ $-structure by \autoref{th:prelim-extdim} and \autoref{th:prelim-cyclicity}. The only difference lies in the endomorphism spaces, which are smaller in $ \Mod_{Π_{Q'}} $. Therefore on hom spaces, we simply send the basis vector $ v_{ij, l} $ in $ \Mod_{Π_{Q'}} $ to the corresponding basis vector $ v_{ij, l} $ in $ \Mod_{Π_Q} $, and likewise the identities and starred basic vectors. We immediately obtain that $ F $ is a strict unital faithful $ A_∞ $-embedding.

Let us now show that $ F $ sends honest modules with honest modules. We start with an honest module $ T ∈ \Tw\{\tilde S_1, …, \tilde S_k\} $. Then $ T $ is quasi-isomorphic to some module $ M $ within $ \Mod_{Π_Q} $. Take a Jordan-Hölder filtration of $ M $ with simple subquotients $ M_1, …, M_s $. This provides a quasi-isomorphism between $ M $ and a twisted complex $ M' = (M_1 ⊕ … ⊕ M_s, δ) $. Now that $ T $ and $ M' $ are quasi-isomorphic, we have maps $ f: T → M' $ and $ g: M' → T $ of degree zero and maps $ h_T: T → T $ and $ h_{M'}: M' → T $ of degree $ -1 $ such that $ μ^2 (f, g) = \id_{M'} + μ^1 (h_{M'}) $ and $ μ^2 (g, f) = \id_T + μ^1 (h_T) $. But since the negative hom spaces vanish, we have $ h_T = h_{M'} = 0 $ and we conclude that $ T $ and $ M' $ are in fact isomorphic. In particular, they share the same composition factors and we conclude $ M_i ∈ \{\tilde S_1, …, \tilde S_k\} $. Finally, we conclude that $ F(M') $ is a twisted complex of unshifted copies of the modules $ F(\tilde S_i) = S_i $ and is by \autoref{th:functor-construction-honest} an honest module. Since $ F(T) $ is quasi-isomorphic to $ F(M') $, we conclude that $ F(T) $ is an honest module as well. This shows that $ F $ sends honest modules to honest modules, and finishes the proof.
\end{proof}

\addtocontents{toc}{\SkipTocEntry}
\subsection{Stability of the local Calabi-Yau-2 functor}
\label{sec:functor-stability}
In this section, we show that the functor $ F $ preserves (semi)stable objects and in particular we prove \autoref{th:main-functor-stability}. We start from a stability parameter $ θ $ on the global quiver $ (Q, α) $ and define a corresponding local stability parameter $ θ' $ on the local quiver $ (Q', α') $. By inspection into the twisted complexes, we easily prove that $ F $ reflects subobjects, and conclude that $ F $ sends $ θ' $-semistable, $ θ' $-polystable and $ θ' $-stable representations to $ θ $-semistable, $ θ $-polystable and $ θ' $-semistable representations.
\begin{center}
\begin{tikzpicture}
\path (0, 0) node[align=center] (A) {\textbf{Representations of $ Q' $} \\ $ θ' $-semistable \\ $ θ' $-polystable \\ $ θ' $-stable};
\path (8, 0) node[align=center] (B) {\textbf{Representations of $ Q $} \\ $ θ $-semistable \\ $ θ $-polystable \\ $ θ $-stable};
\path[draw, <->] ($ (A.east) + (right:0.5) $) to node[midway, above] {Functor $ F $} ($ (B.west) + (left:0.5) $);
\end{tikzpicture}
\end{center}

\begin{definition}
Let $ (Q, α) $ be a quiver setting with $ α ∈ Σ_{0, 0} $. Let $ L $ be a codimension-2 leaf with local quiver $ (Q', α') $ and roots $ β_1, …, β_k $. Let $ θ ∈ ℤ^{Q_0} $ be a stability parameter such that $ θ · α = 0 $. Then the \emph{local stability parameter} $ θ' ∈ ℤ^{Q'_0} $ at $ L $ is defined by
\begin{equation*}
θ' = (θ · β_1, …, θ · β_k).
\end{equation*}
\end{definition}

We are now ready to compare the stability properties of a representation $ X $ with its image $ F(X) $. By abuse of notation, we denote the functor $ \HTw\{\tilde S_1, …, \tilde S_k\} → \H\Tw\{S_1, …, S_k\} $ by $ F $ as well. Thanks to our cautious preparations, we shall also mix the language of twisted complexes and modules. For instance, when $ X $ is a twisted complex that is an honest module, then we also write $ X $ for that module and use module notation such as “$ ⊂ $”.

\begin{lemma}
\label{th:functor-stability-subrep}
Let $ (Q, α) $ be a quiver setting and $ S_1, …, S_k ∈ \Mod_{Π_Q} $ be simple representations. Let $ Q' $ be the local quiver and $ \tilde S_1, …, \tilde S_k $ its standard simples. Let $ θ ∈ ℤ^{Q_0} $ be a stability parameter and $ θ' $ its localized version. Let $ X, Y ∈ \Tw\{\tilde S_1, …, \tilde S_k\} $ be two honest modules. Then we have the following statements:
\begin{itemize}
\item We have $ \dim(X) · θ' = \dim F(X) · θ $.
\item $ Y $ is a subrepresentation of $ X $ if and only if $ F(Y) $ is a subrepresentation of $ F(X) $.
\item If $ Y' ⊂ F(X) $ is a subrepresentation, then $ Y' $ lies in the essential image of $ F $.
\item The functor $ F $ preserves direct sums.
\end{itemize}
\end{lemma}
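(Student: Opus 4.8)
The plan is to reduce all four statements to the combinatorial bookkeeping of shift-free twisted complexes, exploiting that $F$ is strict and, by its construction in \autoref{th:result-functor} together with the Ext-dimension formula \autoref{th:prelim-extdim}, a graded linear isomorphism on every hom space $\Hom_{\Pi_{Q'}}(\tilde S_i, \tilde S_j) \isoto \Hom_{\Pi_Q}(S_i, S_j)$. First I would invoke \autoref{th:functor-construction-honest} to write $X \cong (\bigoplus_i \tilde S_i^{\oplus n_i}, \delta)$ as a shift-free twisted complex with some multiplicities $n_1, \ldots, n_k$. Strictness then forces $F(X) \cong (\bigoplus_i S_i^{\oplus n_i}, F(\delta))$ with the identical multiplicities, so that $\dim X = (n_1, \ldots, n_k)$ while $\dim F(X) = \sum_i n_i \beta_i$. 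Since $F$ is strict and injective on hom spaces, the Maurer--Cartan equation for $\delta$ is equivalent to the one for $F(\delta)$, so this push-forward genuinely lands among twisted complexes.

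The first bullet is then immediate from this matching of multiplicities. Unwinding the definition of the localized stability parameter, I compute
\begin{equation*}
\dim(X) \cdot \theta' = \sum_{i=1}^k n_i\, \theta'_i = \sum_{i=1}^k n_i\, (\theta \cdot \beta_i) = \theta \cdot \Big(\sum_{i=1}^k n_i \beta_i\Big) = \dim F(X) \cdot \theta,
\end{equation*}
which is the claimed identity.

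For the second and third bullets I would work entirely with subcomplexes. Given a submodule $Y \subset X$, \autoref{th:functor-construction-honest} realizes $Y$ as a subcomplex of $X$, that is, as the restriction of $(\bigoplus_i \tilde S_i^{\oplus n_i}, \delta)$ to a summand-subset on which $\delta$ restricts. Because $F$ is strict and bijective on the relevant $\Ext^1$-spaces, applying $F$ carries this datum to the subcomplex of $F(X)$ on the corresponding summand-subset, giving $F(Y) \subset F(X)$; conversely, any subcomplex of $F(X)$ has all its structure maps in the image of $F$, hence descends to a subcomplex of $X$. This simultaneously yields the essential-image claim of the third bullet, since every subcomplex of the shift-free complex $F(X)$ is manifestly $F$ of a shift-free complex of the $\tilde S_i$. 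To close the backward direction of the second bullet, I would pull a subcomplex $F(Y) \subset F(X)$ back to a subcomplex $Y'' \subset X$ with $F(Y'') = F(Y)$ and invoke the faithfulness clause of \autoref{th:result-functor} (isomorphic images force isomorphic honest modules) to identify $Y'' \cong Y$. The fourth bullet then follows from strictness alone: the direct sum of twisted complexes is the block-diagonal complex on the union of summands, and $F$ preserves this block structure, so $F(X \oplus Y) = F(X) \oplus F(Y)$.

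The main obstacle I anticipate is the bridge between the module-theoretic notion of \emph{subrepresentation} and the twisted-complex notion of \emph{subcomplex}. The delicate point is that a submodule need not a priori be a literal subset of summands of the chosen twisted complex; it is exactly \autoref{th:functor-construction-honest} that guarantees every submodule can be straightened into a subcomplex, and one must check that the pull-back and push-forward of subcomplexes under $F$ respect this realization on both sides. Once that compatibility is in place, the bijectivity of $F$ on the relevant $\Ext^1$-spaces (guaranteed by \autoref{th:prelim-extdim}) makes the correspondence of subcomplexes, and hence of subrepresentations, completely symmetric.
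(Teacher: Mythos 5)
Your proposal is correct and follows essentially the same route as the paper's proof: reduce the dimension-vector identity to the simples via shift-free multiplicities, use \autoref{th:functor-construction-honest} to straighten submodules into subcomplexes, transport subcomplexes back and forth via strictness of $F$, and observe that direct sums are block-diagonal twisted complexes. Your extra care in the backward direction of the second bullet (identifying the pulled-back subcomplex with $Y$ via the faithfulness clause) is a slight refinement of a step the paper states more tersely, but it is not a different argument.
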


\begin{proof}
We prove all four statements in sequence.

Regard the first statement. By definition of the twisted completion of $ F $, it suffices to prove the statement in case $ X = \tilde S_i $ is one of the standard simples. The statement then holds by the definition $ θ'_i = \dim(S_i) · θ $.

Regard the second statement. Let $ Y ⊂ X $ be a subrepresentation. By \autoref{th:functor-construction-honest}, we can realize $ Y $ as a subcomplex of $ X $. Since $ F $ is a strict functor, $ F(Y) $ is a subcomplex of $ F(X) $. Since both $ F(X) $ and $ F(Y) $ are honest modules, we conclude that $ F(Y) $ is a subrepresentation of $ F(X) $. This finishes the first direction. Let now $ F(Y) ⊂ F(X) $ be a subrepresentation. By \autoref{th:functor-construction-honest}, we can realize $ F(Y) $ as a subcomplex of $ F(X) $. Since $ F $ is a strict functor, $ Y $ is a subcomplex of $ X $. We conclude that $ Y $ is a subrepresentation of $ X $. This proves the second statement.

Regard the third statement. Let $ Y' ⊂ F(X) $ be a subrepresentation. By \autoref{th:functor-construction-honest}, we can realize $ Y' $ as subcomplex of $ F(X) $. In particular, as a subcomplex complex $ Y' $ contains no self-ext terms and we conclude that $ Y' $ lies in the image of $ F $.

The fourth statement holds by definition of $ F $ on twisted complexes. This finishes the proof.
\end{proof}

We are now ready to finish the proof of \autoref{th:main-functor-stability}.

\begin{proof}[Proof of \autoref{th:main-functor-stability}]
The proof is now an easy consequence of \autoref{th:functor-stability-subrep} by making use of the characterization of stability in terms of submodules. We shall spell out the details for the first statement only.

Assume that $ X $ is $ θ' $-semistable and let $ Y' ⊂ F(X) $ be a subrepresentation. By \autoref{th:functor-stability-subrep}, $ Y' $ lies in the essential image of $ F $. Let us write $ Y' = F(Y) $. Then $ F(Y) $ is a subrepresentation of $ F(X) $ and by \autoref{th:functor-stability-subrep} we conclude that $ Y $ is a subrepresentation of $ X $. By \autoref{th:functor-stability-subrep} and the $ θ' $-semistability of $ X $, we have $ 0 ≤ \dim(Y) · θ' = \dim(F(Y)) · θ = \dim(Y') · θ $. We conclude that $ F(X) $ is $ θ $-semistable.

Assume conversely that $ F(X) $ is $ θ $-semistable and let $ Y ⊂ X $ be a subrepresentation. By \autoref{th:functor-stability-subrep}, $ F(Y) $ is a subrepresentation of $ F(X) $. By $ θ $-semistability of $ F(X) $, we have $ 0 ≤ \dim(F(Y)) · θ = \dim(Y) · θ' $. Since $ Y $ was arbitrary, we conclude that $ X $ is $ θ $-semistable.

This proves the first statement. The second and third statements are analogous and we finish the proof.
\end{proof}

\section{Analysis of the Kleinian exceptional fiber}
\label{sec:app-excfiber}
In this section, we substantiate the terse explanations in \autoref{sec:excfiber} and provide the full proof of \autoref{th:main-excfiber-action}. More specifically, we describe the exceptional fiber of partial and full resolutions of Kleinian singularities and describe the action of the quiver symmetries on the exceptional fiber. In specific cases, we make the exceptional fiber and the action fully explicit, for reference and to enhance intuition.

We do not make much reference in this section to the original quiver setting $ (Q, α) $ of which $ (Q', α') $ is the local quiver. However, we only need to treat those quiver automorphisms $ σ $ and stability parameters $ θ' $ which can actually appear as elements of the automorphism group of a local quiver and as localized stability parameter of an actual pseudo-generic stability parameter. For instance, the possible group symmetries $ σ $ are only those listed in \autoref{fig:symmetry-symmetry-possible}. Similarly, the possible stability parameters $ θ' $ are quite restricted, first and foremost by necessarily $ σ $-symmetric, but secondly by the fact that they come from localization of a pseudo-generic $ θ $. For instance, the combination of the symmetry $ σ $ and the stability parameter $ θ' $ depicted in \autoref{fig:app-excfiber-impossible} is impossible. This implies that many irregular combinations of $ σ $ and $ θ' $ need not be treated in order to prove \autoref{th:main-excfiber-action}.

\begin{figure}
\begin{tikzpicture}
\begin{scope}[shift={(4, 0)}]
\path (2, 0) node (A) {$ +1 $};
\path (0, 2) node (B) {$ 0 $};
\path (-2, 0) node (C) {$ 0 $};
\path (0, -2) node (D) {$ +1 $};
\path (0, 0) node (E) {$ -1 $};
\path[draw, ->] ($ (C) + (20:0.4) $) to node[midway, above] {$ $} ($ (E) + (160:0.4) $);
\path[draw, <-] ($ (C) + (340:0.4) $) to node[midway, below] {$ $} ($ (E) + (200:0.4) $);
\path[draw, ->] ($ (B) + (290:0.4) $) to node[midway, right] {$ $} ($ (E) + (70:0.4) $);
\path[draw, <-] ($ (B) + (250:0.4) $) to node[midway, left] {$ $} ($ (E) + (110:0.4) $);
\path[draw, ->] ($ (A) + (200:0.4) $) to node[midway, below] {$  $} ($ (E) + (340:0.4) $);
\path[draw, <-] ($ (A) + (160:0.4) $) to node[midway, above] {$  $} ($ (E) + (20:0.4) $);
\path[draw, ->] ($ (D) + (110:0.4) $) to node[midway, left] {$  $} ($ (E) + (250:0.4) $);
\path[draw, <-] ($ (D) + (70:0.4) $) to node[midway, right] {$ $} ($ (E) + (290:0.4) $);
\path[draw] \foreach \i in {A, B, C, D, E} {(\i) circle[radius=0.3]};
\path[draw, thick, double equal sign distance, <->] ($ (B.south west) + (left:0.2) $) to node[midway, left] {$ σ $} ($ (C.north east) + (right:0.2) $);
\end{scope}
\end{tikzpicture}
\caption{This combination of $ σ ∈ \Aut(Q', α') $ and $ θ' ∈ ℤ^{Q'_0} $ need not be checked for \autoref{th:main-excfiber-action}, because it can impossibly appear as element of the symmetry group of a local labeled quiver $ (Q', α', (β_1, …, β_k)) $ and localization of a pseudo-generic stability parameter, under the assumption that $ α ∈ Σ_{0, 0} $ is indivisible or $ (\gcd(α), p(\gcd(α)^{-1} α)) = (2, 2) $, and that $ θ $ is pseudo-generic. Indeed, having $ θ'_1 = θ'_2 = 0 $ as indicated in the picture would imply that $ θ · β_1 = θ · β_2 = 0 $. Since $ θ $ is pseudo-generic, this implies $ β_1 = β_1 = α/2 $, which is absurd since $ α $ is the sum of all five roots. This shows that the depicted combination of $ σ $ and $ θ $ is impossible.}
\label{fig:app-excfiber-impossible}
\end{figure}



We proceed by distinguishing two cases: In case $ θ' $ is generic, the argument is short and is treated in \autoref{sec:app-excfiber-cb} and \autoref{sec:app-excfiber-all}. In case $ θ' $ is not generic, the proof collapses into case distinctions, but we indicate how to proceed in \autoref{sec:app-excfiber-partial}. This finishes the proof of \autoref{th:main-excfiber-action}.

\addtocontents{toc}{\SkipTocEntry}
\subsection{The case of $ θ' = (-K, +1, …, +1) $}
\label{sec:app-excfiber-cb}
Let $ (Q', α') $ be a Kleinian quiver setting and $ θ' ∈ ℤ^{Q'_0} $ be the stability parameter $ θ' = (-K, +1, …, +1) $ chosen by Crawley-Boevey \cite{cb-kleinian}. It is our aim to describe the action of the potential symmetries of $ (Q', α') $ on the exceptional fiber $ E = π^{-1} (0) $ where $ π: \M_{θ'} (Q', α') → \M(Q', α') $.

Crawley-Boevey investigates the case of a Kleinian quiver $ (Q', α') $ with $ θ' = (-K, +1, …, +1) $ where the $ -K $ stands on the special vertex. Note that this is a generic stability parameter and $ \M_{θ'} (Q', α') $ is smooth. Crawley-Boevey shows that representations in the $ n $-many projective lines of the special fiber are distinguished by their \emph{socle}. The socle is by definition the sum of all simple subrepresentations:
\begin{equation*}
\soc M ≔ \sum_{\substack{N ⊂ M \\ \text{simple}}} N ~⊂M.
\end{equation*}
The special fiber consists of $ n $-many projective lines, and on the bulk of each projective line, the socle is one of the vertex simples. As we let an automorphism $ σ ∈ \Aut(Q', α') $ act on a representation, the socle permutes accordingly and we conclude that the action of $ σ $ on the Dynkin diagram is of the same degree as $ σ $ itself. This already proves the desired statement.


In the remainder of this section, we describe for reference and intuition the exceptional fiber together with the action of $ σ $ explicitly in the case of $ A_3 $ with $ θ' = (-3, +1, +1, 1) $, of $ D_4 $ with $ θ' = (-5, +1, +1, +1, +1) $ and $ D_{n≥5} $ with $ θ' = (-(2n-3), +1, …, +1) $.

\begin{figure}
\centering
\begin{tikzpicture}
\begin{scope}
\path (0, 1) node (A) {$ -3 $};
\path (1, 0) node (B) {$ +1 $};
\path (0, -1) node (C) {$ +1 $};
\path (-1, 0) node (D) {$ +1 $};
\path[draw] \foreach \i in {A, B, C, D} {(\i) circle[radius=0.3]};
\path[draw, ->, bend left] ($ (A) + (0:0.4) $) to ($ (B) + (90:0.4) $);
\path[draw, <-, bend right] ($ (A) + (315:0.4) $) to ($ (B) + (135:0.4) $);
\path[draw, ->, bend left] ($ (B) + (270:0.4) $) to ($ (C) + (0:0.4) $);
\path[draw, <-, bend right] ($ (B) + (225:0.4) $) to ($ (C) + (45:0.4) $);
\path[draw, ->, bend left] ($ (C) + (180:0.4) $) to ($ (D) + (270:0.4) $);
\path[draw, <-, bend right] ($ (C) + (135:0.4) $) to ($ (D) + (315:0.4) $);
\path[draw, ->, bend left] ($ (D) + (90:0.4) $) to ($ (A) + (180:0.4) $);
\path[draw, <-, bend right] ($ (D) + (45:0.4) $) to ($ (A) + (225:0.4) $);
\end{scope}
\begin{scope}[shift={(4, 0)}]
\path (2, 0) node (A) {$ +1 $};
\path (0, 2) node (B) {$ +1 $};
\path (-2, 0) node (C) {$ -5 $};
\path (0, -2) node (D) {$ +1 $};
\path (0, 0) node (E) {$ +1 $};
\path[draw, ->] ($ (C) + (20:0.4) $) to node[midway, above] {$ $} ($ (E) + (160:0.4) $);
\path[draw, <-] ($ (C) + (340:0.4) $) to node[midway, below] {$ $} ($ (E) + (200:0.4) $);
\path[draw, ->] ($ (B) + (290:0.4) $) to node[midway, right] {$ $} ($ (E) + (70:0.4) $);
\path[draw, <-] ($ (B) + (250:0.4) $) to node[midway, left] {$ $} ($ (E) + (110:0.4) $);
\path[draw, ->] ($ (A) + (200:0.4) $) to node[midway, below] {$  $} ($ (E) + (340:0.4) $);
\path[draw, <-] ($ (A) + (160:0.4) $) to node[midway, above] {$  $} ($ (E) + (20:0.4) $);
\path[draw, ->] ($ (D) + (110:0.4) $) to node[midway, left] {$  $} ($ (E) + (250:0.4) $);
\path[draw, <-] ($ (D) + (70:0.4) $) to node[midway, right] {$ $} ($ (E) + (290:0.4) $);
\path[draw] \foreach \i in {A, B, C, D, E} {(\i) circle[radius=0.3]};
\end{scope}
\begin{scope}[shift={(7, 0)}]
\path (0, 1) node (A) {$ +1 $};
\path (0, -1) node (B) {$ +1 $};
\path (1, 0) node (C) {$ +1 $};
\path (2, 0) node (D) {$ +1 $};
\path (3, 0) node (E) {$ … $};
\path (4, 0) node (F) {$ +1 $};
\path (5, 0) node (G) {$ +1 $};
\path (6, 1) node (H) {$ +1 $};
\path (6, -1) node (I) {$ -(2n-3) $};
\path[draw, <-] ($ (A) + (335:0.4) $) to node[pos=0.1, right] {$ $} ($ (C) + (115:0.4) $);
\path[draw, ->] ($ (A) + (295:0.4) $) to node[pos=0.6, left] {$  $} ($ (C) + (155:0.4) $);
\path[draw, ->] ($ (B) + (65:0.4) $) to node[pos=0.4, left] {$ $} ($ (C) + (205:0.4) $);
\path[draw, <-] ($ (B) + (25:0.4) $) to node[pos=0.4, right] {$ $} ($ (C) + (245:0.4) $);
\path[draw, ->] ($ (C) + (20:0.4) $) to node[midway, above] {$ $} ($ (D) + (160:0.4) $);
\path[draw, <-] ($ (C) + (340:0.4) $) to node[midway, below] {$ $} ($ (D) + (200:0.4) $);
\path[draw, ->] ($ (F) + (20:0.4) $) to node[midway, above] {$ $} ($ (G) + (160:0.4) $);
\path[draw, <-] ($ (F) + (340:0.4) $) to node[midway, below] {$ $} ($ (G) + (200:0.4) $);
\path[draw, ->] ($ (G) + (65:0.4) $) to node[pos=0.6, left] {$ $} ($ (H) + (205:0.4) $);
\path[draw, <-] ($ (G) + (25:0.4) $) to node[pos=0.6, right] {$ $} ($ (H) + (245:0.4) $);
\path[draw, <-] ($ (G) + (335:0.4) $) to node[pos=0.6, right] {$ $} ($ (I) + (115:0.4) $);
\path[draw, ->] ($ (G) + (295:0.4) $) to node[pos=0.6, left] {$  $} ($ (I) + (155:0.4) $);
\path[draw] \foreach \i in {A, B, C, D, F, G, H} {(\i) circle[radius=0.3]};
\begin{scope}[xscale=3] \path[draw] (I) circle[radius=0.3]; \end{scope}
\end{scope}
\end{tikzpicture}
\caption{This figure depicts three Kleinian quivers with choice of stability parameter. In this section we investigate their exceptional fibers.}
\label{fig:excfiber-smooth-quivers}
\end{figure}
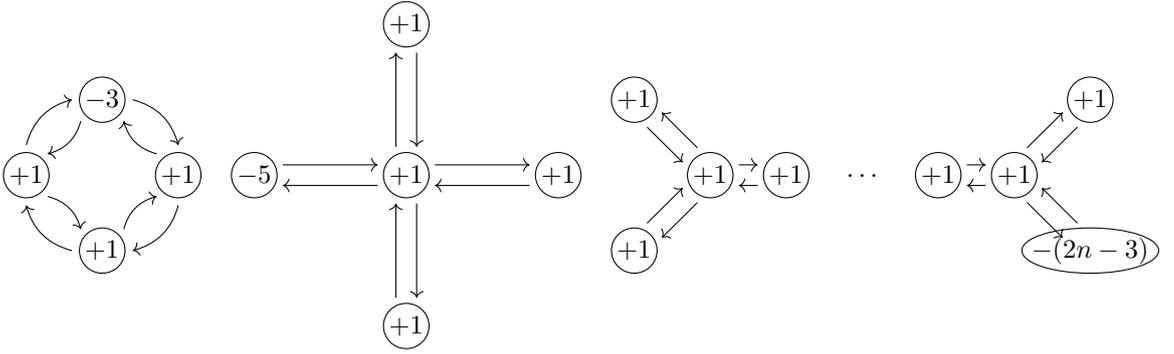

\addtocontents{toc}{\SkipTocEntry}
\subsubsection*{The case of $ A_3 $ with $ θ' = (-3, +1, +1, +1) $}
We shall explicitly describe the three 1-parameter families of $ ϑ' $-stable representations. A representation in the exceptional fiber $ π^{-1} (0) $ is then a representation which satisfies the preprojective condition, lies above zero, and is generated by the special vertex. The basic observation to find them in the $ A_3 $ case is that for a representation in the exceptional fiber, among every pair of arrows $ A_i $ and $ A_i^* $ the value of at least one vanishes. This makes it easy to guess and draw up the three 1-parameter families. We have depicted them in \autoref{fig:excfiber-smooth-A3}.

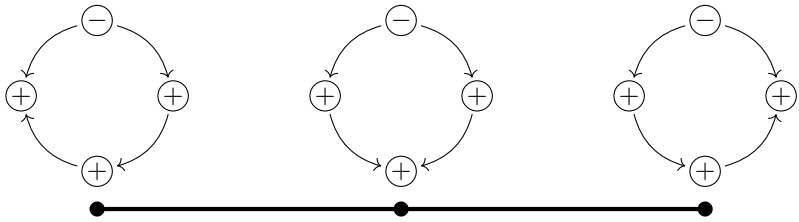
\begin{figure}
\centering
\begin{tikzpicture}
\begin{scope}
\path (0, 1) node (A) {$ - $};
\path (1, 0) node (B) {$ + $};
\path (0, -1) node (C) {$ + $};
\path (-1, 0) node (D) {$ + $};
\path[draw] \foreach \i in {A, B, C, D} {(\i) circle[radius=0.2]};
\path[draw, ->, bend left] (A) to (B);
\path[draw, ->, bend left] (B) to (C);
\path[draw, ->, bend left] (C) to (D);
\path[draw, <-, bend left] (D) to (A);
\end{scope}
\begin{scope}[shift={(right:4)}]
\path (0, 1) node (A) {$ - $};
\path (1, 0) node (B) {$ + $};
\path (0, -1) node (C) {$ + $};
\path (-1, 0) node (D) {$ + $};
\path[draw] \foreach \i in {A, B, C, D} {(\i) circle[radius=0.2]};
\path[draw, ->, bend left] (A) to (B);
\path[draw, ->, bend left] (B) to (C);
\path[draw, <-, bend left] (C) to (D);
\path[draw, <-, bend left] (D) to (A);
\end{scope}
\begin{scope}[shift={(right:8)}]
\path (0, 1) node (A) {$ - $};
\path (1, 0) node (B) {$ + $};
\path (0, -1) node (C) {$ + $};
\path (-1, 0) node (D) {$ + $};
\path[draw] \foreach \i in {A, B, C, D} {(\i) circle[radius=0.2]};
\path[draw, ->, bend left] (A) to (B);
\path[draw, <-, bend left] (B) to (C);
\path[draw, <-, bend left] (C) to (D);
\path[draw, <-, bend left] (D) to (A);
\end{scope}
\path[draw, ultra thick] (0, -1.5) -- (8, -1.5);
\path[fill] (0, -1.5) circle[radius=0.1];
\path[fill] (4, -1.5) circle[radius=0.1];
\path[fill] (8, -1.5) circle[radius=0.1];
\end{tikzpicture}
\caption{This figures depicts the three 1-parameter families of representations which make up the exceptional fiber of the $ A_3 $-quiver with the specific choice $ θ' = (-3, +1, +1, +1) $. The thick lines depict the $ A_3 $ Dynkin diagram, and each of its three vertices corresponds to one representation shape.}
\label{fig:excfiber-smooth-A3}
\end{figure}

\addtocontents{toc}{\SkipTocEntry}
\subsubsection*{The case of $ D_4 $ with $ θ' = (-1, -1, +5, -1, -1) $}
We shall explicitly describe the four 1-parameter families of $ ϑ' $-stable representations. The basic observation to find them is to use that all these representations have a filtration by the standard simples of $ Q' $. This implies that when replacing any nonzero numbers in the representation's matrices by arrows, the result is a non-circular diagram. In particular, no arrow leaves the vertex 3 of this diagram, and hence the value of the representation on the arrow from vertex 3 to 5 must be zero. These observations make it easy to draw up the four 1-parameter families, which we have depicted in \autoref{fig:excfiber-smooth-D4}.

\begin{figure}
\centering
\begin{tikzpicture}
\begin{scope}
\path (2, 0) node (A) {$ + $};
\path (0, 2) node (B) {$ + $};
\path (-2, 0) node (C) {$ - $};
\path (0, -2) node (D) {$ + $};
\path (0, 0) node (E) {$ + $};
\path[draw, ->] ($ (C) + (20:0.3) $) to node[midway, above] {$ e_1 $} ($ (E) + (160:0.3) $);
\path[draw, <-] ($ (C) + (340:0.3) $) to node[midway, below] {$ 0 $} ($ (E) + (200:0.3) $);
\path[draw, ->] ($ (B) + (290:0.3) $) to node[midway, right] {$ 0 $} ($ (E) + (70:0.3) $);
\path[draw, <-] ($ (B) + (250:0.3) $) to node[midway, left] {$ \small \begin{pmatrix} 1 & * \end{pmatrix} $} ($ (E) + (110:0.3) $);
\path[draw, ->] ($ (A) + (200:0.3) $) to node[midway, below] {$ e_2 $} ($ (E) + (340:0.3) $);
\path[draw, <-] ($ (A) + (160:0.3) $) to node[midway, above] {$ π_1 $} ($ (E) + (20:0.3) $);
\path[draw, ->] ($ (D) + (110:0.3) $) to node[midway, left] {$ -e_2 $} ($ (E) + (250:0.3) $);
\path[draw, <-] ($ (D) + (70:0.3) $) to node[midway, right] {$ π_1 $} ($ (E) + (290:0.3) $);
\path[draw] \foreach \i in {A, B, C, D, E} {(\i) circle[radius=0.2]};
\end{scope}
\begin{scope}[shift={(5, 7)}]
\path (2, 0) node (A) {$ + $};
\path (0, 2) node (B) {$ + $};
\path (-2, 0) node (C) {$ - $};
\path (0, -2) node (D) {$ + $};
\path (0, 0) node (E) {$ + $};
\path[draw, ->] ($ (C) + (20:0.3) $) to node[midway, above] {$ e_1 $} ($ (E) + (160:0.3) $);
\path[draw, <-] ($ (C) + (340:0.3) $) to node[midway, below] {$ 0 $} ($ (E) + (200:0.3) $);
\path[draw, ->] ($ (B) + (290:0.3) $) to node[midway, right] {$ e_2 $} ($ (E) + (70:0.3) $);
\path[draw, <-] ($ (B) + (250:0.3) $) to node[midway, left] {$ π_1 $} ($ (E) + (110:0.3) $);
\path[draw, ->] ($ (A) + (200:0.3) $) to node[midway, below] {$ 0 $} ($ (E) + (340:0.3) $);
\path[draw, <-] ($ (A) + (160:0.3) $) to node[midway, above] {$ \small \begin{pmatrix} 1 & * \end{pmatrix} $} ($ (E) + (20:0.3) $);
\path[draw, ->] ($ (D) + (110:0.3) $) to node[midway, left] {$ -e_2 $} ($ (E) + (250:0.3) $);
\path[draw, <-] ($ (D) + (70:0.3) $) to node[midway, right] {$ π_1 $} ($ (E) + (290:0.3) $);
\path[draw] \foreach \i in {A, B, C, D, E} {(\i) circle[radius=0.2]};
\end{scope}
\begin{scope}[shift={(10, 0)}]
\path (2, 0) node (A) {$ + $};
\path (0, 2) node (B) {$ + $};
\path (-2, 0) node (C) {$ - $};
\path (0, -2) node (D) {$ + $};
\path (0, 0) node (E) {$ + $};
\path[draw, ->] ($ (C) + (20:0.3) $) to node[midway, above] {$ e_1 $} ($ (E) + (160:0.3) $);
\path[draw, <-] ($ (C) + (340:0.3) $) to node[midway, below] {$ 0 $} ($ (E) + (200:0.3) $);
\path[draw, ->] ($ (B) + (290:0.3) $) to node[midway, right] {$ e_2 $} ($ (E) + (70:0.3) $);
\path[draw, <-] ($ (B) + (250:0.3) $) to node[midway, left] {$ π_1 $} ($ (E) + (110:0.3) $);
\path[draw, ->] ($ (A) + (200:0.3) $) to node[midway, below] {$ -e_2 $} ($ (E) + (340:0.3) $);
\path[draw, <-] ($ (A) + (160:0.3) $) to node[midway, above] {$ π_1 $} ($ (E) + (20:0.3) $);
\path[draw, ->] ($ (D) + (110:0.3) $) to node[midway, left] {$ 0 $} ($ (E) + (250:0.3) $);
\path[draw, <-] ($ (D) + (70:0.3) $) to node[midway, right] {$ \small \begin{pmatrix} 1 & * \end{pmatrix} $} ($ (E) + (290:0.3) $);
\path[draw] \foreach \i in {A, B, C, D, E} {(\i) circle[radius=0.2]};
\end{scope}
\begin{scope}[shift={(5, 0)}]
\path (2, 0) node (A) {$ + $};
\path (0, 2) node (B) {$ + $};
\path (-2, 0) node (C) {$ - $};
\path (0, -2) node (D) {$ + $};
\path (0, 0) node (E) {$ + $};
\path[draw, ->] ($ (C) + (20:0.3) $) to node[midway, above] {$ e_1 $} ($ (E) + (160:0.3) $);
\path[draw, <-] ($ (C) + (340:0.3) $) to node[midway, below] {$ 0 $} ($ (E) + (200:0.3) $);
\path[draw, ->] ($ (B) + (290:0.3) $) to node[midway, right] {$ αe_2 $} ($ (E) + (70:0.3) $);
\path[draw, <-] ($ (B) + (250:0.3) $) to node[midway, left] {$ π_1 $} ($ (E) + (110:0.3) $);
\path[draw, ->] ($ (A) + (200:0.3) $) to node[midway, below] {$ βe_2 $} ($ (E) + (340:0.3) $);
\path[draw, <-] ($ (A) + (160:0.3) $) to node[midway, above] {$ π_1 $} ($ (E) + (20:0.3) $);
\path[draw, ->] ($ (D) + (110:0.3) $) to node[midway, left] {$ γe_2 $} ($ (E) + (250:0.3) $);
\path[draw, <-] ($ (D) + (70:0.3) $) to node[midway, right] {$ π_1 $} ($ (E) + (290:0.3) $);
\path[draw] \foreach \i in {A, B, C, D, E} {(\i) circle[radius=0.2]};
\end{scope}
\path[draw, ultra thick] (0, 3) to (10, 3);
\path[draw, ultra thick] (5, 3) to (5, 4);
\path[fill] (0, 3) circle[radius=0.1];
\path[fill] (5, 3) circle[radius=0.1];
\path[fill] (10, 3) circle[radius=0.1];
\path[fill] (5, 4) circle[radius=0.1];
\end{tikzpicture}
\caption{This figures depicts the four 1-parameter families of representations which make up the exceptional fiber of the $ D_4 $-quiver with the specific choice $ θ' = (+1, +1, -5, +1, +1) $. In the representation in the middle, we suppose $ α + β + γ = 0 $, which gives a 1-parameter family after dividing out by isomorphisms. The thick lines depict the $ D_4 $ Dynkin diagram, and each of its four vertices corresponds to one representation shape.}
\label{fig:excfiber-smooth-D4}
\end{figure}
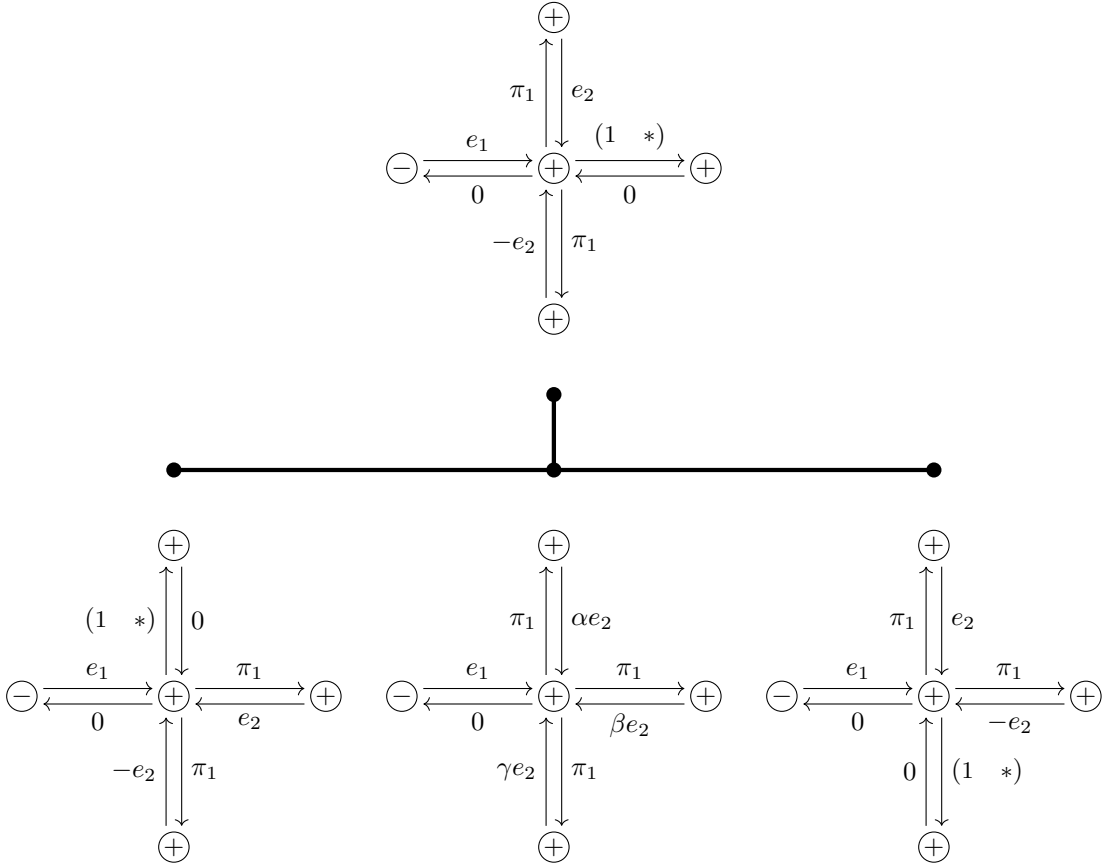

\addtocontents{toc}{\SkipTocEntry}
\subsubsection*{The case of $ D_{n≥5} $ with $ θ' = (+1, +1, +1, …, +1, +1, -(2n-3)) $}
We shall explicitly describe the $ n $-many 1-parameter families of $ ϑ' $-stable representations. Again, we use the basic observation that all these representations have a filtration by the standard simples of $ Q' $, and Crawley-Boevey's observation that all the vertices apart from vertex $ n $ appear as socles. We have depicted the $ n $-many families in \autoref{fig:excfiber-smooth-Dn}.

\begin{figure}
\centering
\begin{tikzpicture}
\begin{scope}[shift={(0, 5)}]
\path (0, 1) node (A) {$ + $};
\path (0, -1) node (B) {$ + $};
\path (1, 0) node (C) {$ + $};
\path (2, 0) node (D) {$ + $};
\path (3, 0) node (E) {$ … $};
\path (4, 0) node (F) {$ + $};
\path (5, 0) node (G) {$ + $};
\path (6, 1) node (H) {$ + $};
\path (6, -1) node (I) {$ - $};
\path[draw, <-] ($ (A) + (335:0.3) $) to node[pos=0.1, right] {$ (1 ~~ *) $} ($ (C) + (115:0.3) $);
\path[draw, ->] ($ (A) + (295:0.3) $) to node[pos=0.6, left] {$ 0 $} ($ (C) + (155:0.3) $);
\path[draw, ->] ($ (B) + (65:0.3) $) to node[pos=0.4, left] {$ e_2 $} ($ (C) + (205:0.3) $);
\path[draw, <-] ($ (B) + (25:0.3) $) to node[pos=0.3, right] {$ π_1 $} ($ (C) + (245:0.3) $);
\path[draw, ->] ($ (C) + (20:0.3) $) to node[midway, above] {$ \tiny \begin{pmatrix} 0 & 0 \\ 1 & 0 \end{pmatrix} $} ($ (D) + (160:0.3) $);
\path[draw, <-] ($ (C) + (340:0.3) $) to node[midway, below] {$ I_2 $} ($ (D) + (200:0.3) $);
\path[draw, ->] ($ (F) + (20:0.3) $) to node[midway, above] {$ \tiny \begin{pmatrix} 0 & 0 \\ 1 & 0 \end{pmatrix} $} ($ (G) + (160:0.3) $);
\path[draw, <-] ($ (F) + (340:0.3) $) to node[midway, below] {$ I_2 $} ($ (G) + (200:0.3) $);
\path[draw, ->] ($ (G) + (65:0.3) $) to node[pos=0.6, left] {$ π_1 $} ($ (H) + (205:0.3) $);
\path[draw, <-] ($ (G) + (25:0.3) $) to node[pos=0.6, right] {$ e_2 $} ($ (H) + (245:0.3) $);
\path[draw, <-] ($ (G) + (335:0.3) $) to node[pos=0.6, right] {$ e_1 $} ($ (I) + (115:0.3) $);
\path[draw, ->] ($ (G) + (295:0.3) $) to node[pos=0.6, left] {$ 0 $} ($ (I) + (155:0.3) $);
\path[draw] \foreach \i in {A, B, C, D, F, G, H, I} {(\i) circle[radius=0.2]};
\end{scope}
\begin{scope}[shift={(0, -5)}]
\path (0, 1) node (A) {$ + $};
\path (0, -1) node (B) {$ + $};
\path (1, 0) node (C) {$ + $};
\path (2, 0) node (D) {$ + $};
\path (3, 0) node (E) {$ … $};
\path (4, 0) node (F) {$ + $};
\path (5, 0) node (G) {$ + $};
\path (6, 1) node (H) {$ + $};
\path (6, -1) node (I) {$ - $};
\path[draw, <-] ($ (A) + (335:0.3) $) to node[pos=0.1, right] {$ π_1 $} ($ (C) + (115:0.3) $);
\path[draw, ->] ($ (A) + (295:0.3) $) to node[pos=0.6, left] {$ e_2 $} ($ (C) + (155:0.3) $);
\path[draw, ->] ($ (B) + (65:0.3) $) to node[pos=0.4, left] {$ 0 $} ($ (C) + (205:0.3) $);
\path[draw, <-] ($ (B) + (25:0.3) $) to node[pos=0.3, right] {$ (1 ~~ *) $} ($ (C) + (245:0.3) $);
\path[draw, ->] ($ (C) + (20:0.3) $) to node[midway, above] {$ \tiny \begin{pmatrix} 0 & 0 \\ 1 & 0 \end{pmatrix} $} ($ (D) + (160:0.3) $);
\path[draw, <-] ($ (C) + (340:0.3) $) to node[midway, below] {$ I_2 $} ($ (D) + (200:0.3) $);
\path[draw, ->] ($ (F) + (20:0.3) $) to node[midway, above] {$ \tiny \begin{pmatrix} 0 & 0 \\ 1 & 0 \end{pmatrix} $} ($ (G) + (160:0.3) $);
\path[draw, <-] ($ (F) + (340:0.3) $) to node[midway, below] {$ I_2 $} ($ (G) + (200:0.3) $);
\path[draw, ->] ($ (G) + (65:0.3) $) to node[pos=0.6, left] {$ π_1 $} ($ (H) + (205:0.3) $);
\path[draw, <-] ($ (G) + (25:0.3) $) to node[pos=0.6, right] {$ e_2 $} ($ (H) + (245:0.3) $);
\path[draw, <-] ($ (G) + (335:0.3) $) to node[pos=0.6, right] {$ e_1 $} ($ (I) + (115:0.3) $);
\path[draw, ->] ($ (G) + (295:0.3) $) to node[pos=0.6, left] {$ 0 $} ($ (I) + (155:0.3) $);
\path[draw] \foreach \i in {A, B, C, D, F, G, H, I} {(\i) circle[radius=0.2]};
\end{scope}
\begin{scope}[shift={(10, -2)}, transform canvas={scale=0.7}]
\path (0, 1) node (A) {$ + $};
\path (0, -1) node (B) {$ + $};
\path (1, 0) node (C) {$ + $};
\path (2, 0) node (D) {$ + $};
\path (3, 0) node (E) {$ … $};
\path (4, 0) node (F) {$ + $};
\path (5, 0) node (G) {$ + $};
\path (6, 1) node (H) {$ + $};
\path (6, -1) node (I) {$ - $};
\path[draw, <-] ($ (A) + (335:0.3) $) to node[pos=0.1, right] {$ π_1 $} ($ (C) + (115:0.3) $);
\path[draw, ->] ($ (A) + (295:0.3) $) to node[pos=0.6, left] {$ α e_2 $} ($ (C) + (155:0.3) $);
\path[draw, ->] ($ (B) + (65:0.3) $) to node[pos=0.4, left] {$ (-α-1) e_2 $} ($ (C) + (205:0.3) $);
\path[draw, <-] ($ (B) + (25:0.3) $) to node[pos=0.3, right] {$ π_1 $} ($ (C) + (245:0.3) $);
\path[draw, ->] ($ (C) + (20:0.3) $) to node[midway, above] {$ I_2 $} ($ (D) + (160:0.3) $);
\path[draw, <-] ($ (C) + (340:0.3) $) to node[midway, below] {$ \tiny \begin{pmatrix} 0 & 0 \\ 1 & 0 \end{pmatrix} $} ($ (D) + (200:0.3) $);
\path[draw, ->] ($ (F) + (20:0.3) $) to node[midway, above] {$ I_2 $} ($ (G) + (160:0.3) $);
\path[draw, <-] ($ (F) + (340:0.3) $) to node[midway, below] {$ \tiny \begin{pmatrix} 0 & 0 \\ 1 & 0 \end{pmatrix} $} ($ (G) + (200:0.3) $);
\path[draw, ->] ($ (G) + (65:0.3) $) to node[pos=0.6, left] {$ π_1 $} ($ (H) + (205:0.3) $);
\path[draw, <-] ($ (G) + (25:0.3) $) to node[pos=0.6, right] {$ e_2 $} ($ (H) + (245:0.3) $);
\path[draw, <-] ($ (G) + (335:0.3) $) to node[pos=0.6, right] {$ e_1 $} ($ (I) + (115:0.3) $);
\path[draw, ->] ($ (G) + (295:0.3) $) to node[pos=0.6, left] {$ 0 $} ($ (I) + (155:0.3) $);
\path[draw] \foreach \i in {A, B, C, D, F, G, H, I} {(\i) circle[radius=0.2]};
\end{scope}
\begin{scope}[shift={(8, 5)}]
\path (0, 1) node (A) {$ + $};
\path (0, -1) node (B) {$ + $};
\path (1, 0) node (C) {$ + $};
\path (2, 0) node (D) {$ + $};
\path (3, 0) node (E) {$ … $};
\path (4, 0) node (F) {$ + $};
\path (5, 0) node (G) {$ + $};
\path (6, 1) node (H) {$ + $};
\path (6, -1) node (I) {$ - $};
\path[draw, <-] ($ (A) + (335:0.3) $) to node[pos=0.1, right] {$ π_1 $} ($ (C) + (115:0.3) $);
\path[draw, ->] ($ (A) + (295:0.3) $) to node[pos=0.6, left] {$ e_2 $} ($ (C) + (155:0.3) $);
\path[draw, ->] ($ (B) + (65:0.3) $) to node[pos=0.4, left] {$ e_2 $} ($ (C) + (205:0.3) $);
\path[draw, <-] ($ (B) + (25:0.3) $) to node[pos=0.3, right] {$ π_1 $} ($ (C) + (245:0.3) $);
\path[draw, ->] ($ (C) + (20:0.3) $) to node[midway, above] {$ \tiny \begin{pmatrix} 0 & 0 \\ 0 & 1 \end{pmatrix} $} ($ (D) + (160:0.3) $);
\path[draw, <-] ($ (C) + (340:0.3) $) to node[midway, below] {$ \tiny \begin{pmatrix} 1 & 0 \\ 0 & 0 \end{pmatrix} $} ($ (D) + (200:0.3) $);
\path[draw, ->] ($ (F) + (20:0.3) $) to node[midway, above] {$ \tiny \begin{pmatrix} 0 & 0 \\ 0 & 1 \end{pmatrix} $} ($ (G) + (160:0.3) $);
\path[draw, <-] ($ (F) + (340:0.3) $) to node[midway, below] {$ \tiny \begin{pmatrix} 1 & 0 \\ 0 & 0 \end{pmatrix} $} ($ (G) + (200:0.3) $);
\path[draw, ->] ($ (G) + (65:0.3) $) to node[pos=0.8, left] {$ (1 ~~ *) $} ($ (H) + (205:0.3) $);
\path[draw, <-] ($ (G) + (25:0.3) $) to node[pos=0.6, right] {$ 0 $} ($ (H) + (245:0.3) $);
\path[draw, <-] ($ (G) + (335:0.3) $) to node[pos=0.6, right] {$ e_1 $} ($ (I) + (115:0.3) $);
\path[draw, ->] ($ (G) + (295:0.3) $) to node[pos=0.6, left] {$ 0 $} ($ (I) + (155:0.3) $);
\path[draw] \foreach \i in {A, B, C, D, F, G, H, I} {(\i) circle[radius=0.2]};
\end{scope}
\path[draw, ultra thick] (3, 0) -- (5, 0);
\path[draw] (5.5, 0) node {\Large $ \cdots $};
\path[draw, ultra thick] (6, 0) -- (8, 0);
\path[draw, ultra thick] (8, 0) -- (10, 3);
\path[draw, ultra thick] (1, 3) -- (3, 0);
\path[draw, ultra thick] (1, -3) -- (3, 0);
\path[fill] (1, 3) circle[radius=0.1];
\path[fill] (1, -3) circle[radius=0.1];
\path[fill] (3, 0) circle[radius=0.1];
\path[fill] (8, 0) circle[radius=0.1];
\path[fill] (10, 3) circle[radius=0.1];
\end{tikzpicture}
\caption{This figures depicts the $ n $-many 1-parameter families of representations which make up the exceptional fiber of the $ D_{n≥5} $-quiver with the specific choice $ θ' = (+1, +1, +1, …, +1, -(2n-3)) $.The thick lines depict the $ D_n $ Dynkin diagram, and each of its $ n $ vertices corresponds to one representation shape.}
\label{fig:excfiber-smooth-Dn}
\end{figure}

\addtocontents{toc}{\SkipTocEntry}
\subsection{The case of arbitrary generic $ θ $}
\label{sec:app-excfiber-all}
Let $ (Q', α') $ be a Kleinian quiver setting, let $ σ ∈ \Aut(Q', α') $ be a primitive group element from the list of \autoref{fig:symmetry-symmetry-possible} and let $ θ' ∈ ℤ^{Q'_0} $ be any stability parameter which satisfies $ θ' · α' = 0 $ and is symmetric with respect to $ σ $. Assume that $ θ' $ is generic. It is our task to identify the action of $ σ $ on the exceptional fiber $ E ≔ π_{θ'}^{-1} (0) $ with $ π_{θ'}: \M_{θ'} (Q', α') → \M(Q', α') $.

\begin{remark}
\label{rem:app-exfiber-all-remark}
The specific assumption that the negative entry of $ θ_0' $ be on the special vertex is unnecessary, and can be weakened to requiring only that the negative entry of $ θ_0' $ be one of the “outer vertices”, by which we mean any of the vertices of $ Q' $ obtained from the special vertex by means of a quiver automorphism. For instance, for the Kleinian $ A_n $ quiver every single vertex is an “outer vertex” and for the $ D_n $ quiver all the four vertices marked with $ α'_i = 1 $ are “outer vertices”.
\end{remark}

Thanks to \autoref{sec:app-excfiber-cb}, we can suppose we have checked the claim already for the specific symmetric $ θ_0' = (-K, +1, …, +1) $, and thanks to \autoref{rem:app-exfiber-all-remark} we can assume that we have also checked the claim already for those versions of $ θ_0' $ where the negative entry stands on any of the “outer vertices”. Therefore we can assume that $ θ_0' $ is also symmetric with respect to $ σ $. Indeed, in the $ A_3 $ case, by the primitivity assumption on $ σ $ we can put the $ -3 $ on one of the vertices not affected by $ σ $ and put $ +1 $ on the other three vertices, which renders $ θ_0' $ indeed $ σ $-symmetric. In the $ D_4 $ case, since $ θ' $ is assumed to be generic, not all four outer vertices can be involved in $ σ $, hence we can put $ -5 $ on one of the fixed outer vertices and $ +1 $ on all other vertices, which renders $ θ_0' $ indeed $ σ $-symmetric. In the $ D_{n≥5} $ case, by the primitivity assumption on $ σ $, at most one pair of outer vertices is involved in $ σ $, and we can put $ -(2n-3) $ on one of the outer fixed vertices and $ +1 $ on all other vertices, rendering $ θ_0' $ indeed $ σ $-symmetric. Finally, in every case we have achieved that $ θ_0' $ is $ σ $-symmetric.

Let us now explain why the action of $ σ $ on the special fiber of $ \M_{θ'_0} (Q', α') $ agrees with the action on the special fiber of $ \M_{θ'} (Q', α') $, up to conjugation. We start by regarding the map $ φ: \M(Q', α')  → \M(Q', α') $ which flips any representation by $ σ $. Since $ θ' $ is assumed generic, both $ \M_{θ'} (Q', α') $ and $ \M_{θ'_0} (Q', α') $ are minimal resolutions of $ \M(Q', α') $ and we obtain two lifts $ \tilde{φ}: \M_{θ'} (Q', α') → \M_{θ'} (Q', α') $ and $ \tilde{φ_0}: \M_{θ'_0} (Q', α') → \M_{θ'_0} (Q', α') $ of $ φ $, as well as an isomorphism $ ψ: \M_{θ'} (Q', α') \isoto \M_{θ'_0} (Q', α') $ over $ \M(Q', α') $. All of these maps fit into the following diagram:

\begin{center}
\begin{tikzcd}
\M_{θ'} \arrow{dd} \arrow{rrr}{\sim}[swap]{\tilde{φ}} & & & \M_{θ'} \arrow{dd} \\
& \M_{θ'_0} \arrow[ul, "ψ^{-1}"', "\sim"] \arrow[dl] \arrow[r, "\tilde{φ_0}"] & \M_{θ'_0} \arrow[dr] \arrow[ru, "ψ", "\sim"'] & \\
\M \arrow[rrr, "φ"] &&& \M
\end{tikzcd}
\end{center}
The left triangle and the right triangle commute, because of the chosen isomorphism between the two resolutions. The lower rectangle commutes because we have chosen $ \tilde{φ_0} $ as a lift. In particular, the composition map $ ψ ∘ \tilde{φ_0} ∘ ψ^{-1} $ is a lift of $ φ $. Since also $ \tilde{φ} $ is a lift of $ φ $, we have that $ ψ ∘ \tilde{φ_0} ∘ ψ^{-1} = \tilde{φ} $. This proves the claim.

\addtocontents{toc}{\SkipTocEntry}
\subsection{The case of non-generic $ θ' $}
\label{sec:app-excfiber-partial}
In this section, we investigate the exceptional fiber of the remaining cases of partial resolutions of Kleinian singularities. We start from a Kleinian quiver $ (Q', α') $ of type $ A_3 $, $ D_4 $ or $ D_{n≥5} $, one of the primitive group elements $ σ ∈ \Aut(Q', α') $ listed in \autoref{fig:symmetry-symmetry-possible} and a stability parameter $ θ' ∈ ℤ^{Q'_0} $ with $ θ' · α' = 0 $. We assume that $ θ' $ is not generic, but that $ θ' ≠ 0 $. It is our task to analyze the special fiber $ π_{θ'}^{-1} (0) $ of $ π_{θ'}: \M_{θ'} (Q', α') → \M(Q', α') $ and identify the action of $ σ $ on the special fiber.

As a first step, we decide to treat only two specific cases. These two specific cases are the $ A_3 $ quiver with stability parameter $ θ' = (+1, -1, +1, -1) $ and the $ D_4 $ quiver with stability parameter $ θ' = (-1, -1, -1, -1, +2) $. In both cases $ \M_{θ'} (Q', α') $ is not smooth. Instead, as we shall see, the exceptional fiber contains a single projective line. The singular locus consists of two or three points on this projective line, respectively. In this sense, these two partial resolutions are as singular as possible. We therefore skip all the other cases and focus on analyzing the special fiber and the action of $ σ $ for these two specific cases.

The two quivers with stability parameter are depicted in \autoref{fig:excfiber-partial-A3} and \ref{fig:excfiber-partial-D4}. We start with the observation that in both the cases $ \M_{θ'} (Q', α') $ is not smooth. Indeed, we have $ α' ∈ Σ_{0, 0} $ but the stability parameter $ θ' $ is not generic in the sense that $ α' $ is divisible by two and we have $ θ' · (α'/2) = 0 $. This means that there are singular points in $ \M_{θ'} (Q', α') $. By the criterion of Bellamy and Schedler, the singular points are those representations which are $ θ' $-polystable but not $ θ' $-stable. This provides us with a cue to find the representations explicitly.

By the theory of Kleinian singularities we expect that a partial resolution of a Kleinian singularity has several singular points, connected by projective lines in between. As the singular points are blown up, new projective lines and potentially new singular points appear, until there are $ n $-many projective lines, intersecting in the shape of the Dynkin diagram. Looking backwards, the exceptional fiber of a partial resolution can be described as $ n $-many projective lines in the shape of the Dynkin diagram, with some of them contracted to points. This is precisely the pattern that we shall identify in the two specific $ A_3 $ and $ D_4 $ quivers.

Quick inspection provides us with the desired results:

\subsubsection*{The case of $ A_3 $ with $ θ' = (+1, -1, +1, -1) $}
We find one 1-paramater family of representations and two singular points, depicted in \autoref{fig:excfiber-partial-A3}. It is obvious that the two singular points are indeed distinct. We conclude that the single projective line in the exceptional fiber of $ \M_{θ'} (Q', α') $ corresponds to the middle node of the Dynkin diagram, while the two singular points correspond to the outer two nodes of the Dynkin diagram.

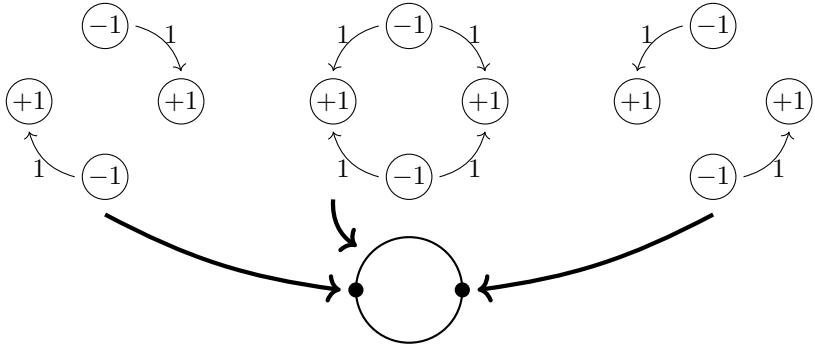
\begin{figure}
\centering
\begin{tikzpicture}
\begin{scope}
\path (0, 1) node (A) {$ -1 $};
\path (1, 0) node (B) {$ +1 $};
\path (0, -1) node (C) {$ -1 $};
\path (-1, 0) node (D) {$ +1 $};
\path[draw] \foreach \i in {A, B, C, D} {(\i) circle[radius=0.3]};
\path[draw, ->, bend left] ($ (A) + (0:0.4) $) to node[pos=0.3, right] {1} ($ (B) + (90:0.4) $);
\path[draw, ->, bend left] ($ (C) + (180:0.4) $) to node[pos=0.3, left] {1} ($ (D) + (270:0.4) $);
\end{scope}
\begin{scope}[shift={(4, 0)}]
\path (0, 1) node (A) {$ -1 $};
\path (1, 0) node (B) {$ +1 $};
\path (0, -1) node (C) {$ -1 $};
\path (-1, 0) node (D) {$ +1 $};
\path[draw] \foreach \i in {A, B, C, D} {(\i) circle[radius=0.3]};
\path[draw, ->, bend left] ($ (A) + (0:0.4) $) to node[pos=0.3, right] {1} ($ (B) + (90:0.4) $);
\path[draw, <-, bend left] ($ (B) + (270:0.4) $) to node[pos=0.7, right] {1} ($ (C) + (0:0.4) $);
\path[draw, ->, bend left] ($ (C) + (180:0.4) $) to node[pos=0.3, left] {1} ($ (D) + (270:0.4) $);
\path[draw, <-, bend left] ($ (D) + (90:0.4) $) to node[pos=0.7, left] {1} ($ (A) + (180:0.4) $);
\end{scope}
\begin{scope}[shift={(8, 0)}]
\path (0, 1) node (A) {$ -1 $};
\path (1, 0) node (B) {$ +1 $};
\path (0, -1) node (C) {$ -1 $};
\path (-1, 0) node (D) {$ +1 $};
\path[draw] \foreach \i in {A, B, C, D} {(\i) circle[radius=0.3]};
\path[draw, <-, bend left] ($ (B) + (270:0.4) $) to node[pos=0.7, right] {1} ($ (C) + (0:0.4) $);
\path[draw, <-, bend left] ($ (D) + (90:0.4) $) to node[pos=0.7, left] {1} ($ (A) + (180:0.4) $);
\end{scope}
\path[draw, thick] (4, -2.5) circle[radius=0.7];
\path[fill] ($ (4, -2.5) + (0:0.7) $) coordinate (X) circle[radius=0.1];
\path[fill] ($ (4, -2.5) + (180:0.7) $) coordinate (Y) circle[radius=0.1];
\path[draw, ultra thick, bend left, <-] ($ (4, -2.5) + (140:0.9) $) to (3, -1.3);
\path[draw, ultra thick, bend right=10, ->] (0, -1.5) to ($ (Y) + (left:0.2) $);
\path[draw, ultra thick, bend left=10, ->] (8, -1.5) to ($ (X) + (right:0.2) $);
\end{tikzpicture}
\caption{This figure depicts the exceptional fiber in the $ A_3 $ case, featuring a 1-parameter family and two singular points. The representations corresponding to the two singular points are drawn, together with the typical representation of the 1-parameter family. Only those arrows with nonzero values are drawn, highlighting that the two singular representations decompose.}
\label{fig:excfiber-partial-A3}
\end{figure}

\subsubsection*{The case of $ D_4 $ with $ θ' = (-1, -1, -1, -1, +2) $}
We find one 1-paramater family of representations and three singular points, depicted in \autoref{fig:excfiber-partial-D4}. In the figure, we have provided an explicit realization of the 1-parameter family. It can alternatively be described as the set of those representations with four pairwise linearly independent vectors $ v_1, …, v_4 ∈ ℂ^2 $ on the arrows from the outer four to the inner vertex, and zeroes in the opposite direction. It is easily verified that the three singular points are indeed distinct. We conclude that the single projective line in the exceptional fiber of $ \M_{θ'} (Q', α') $ corresponds to the middle node of the Dynkin diagram, while the two singular points correspond to the outer three nodes of the Dynkin diagram.

\begin{figure}
\centering
\begin{tikzpicture}
\begin{scope}
\path (2, 0) node (A) {$ -1 $};
\path (0, 2) node (B) {$ -1 $};
\path (-2, 0) node (C) {$ -1 $};
\path (0, -2) node (D) {$ -1 $};
\path (0, 0) node (E) {$ +2 $};
\path[draw, ->] ($ (C) + (0:0.4) $) to node[midway, above] {$ e_1 $} ($ (E) + (180:0.4) $);
\path[draw, ->] ($ (B) + (270:0.4) $) to node[midway, right] {$ e_2 $} ($ (E) + (90:0.4) $);
\path[draw, ->] ($ (A) + (180:0.4) $) to node[midway, below] {$ e_1 + e_2 $} ($ (E) + (0:0.4) $);
\path[draw, ->] ($ (D) + (90:0.4) $) to node[midway, left] {$ e_1 + α e_2 $} ($ (E) + (270:0.4) $);
\path[draw] \foreach \i in {A, B, C, D, E} {(\i) circle[radius=0.3]};
\end{scope}
\begin{scope}[shift={(-5, 0)}]
\path (1.5, 0) node (A) {$ -1 $};
\path (0, 1.5) node (B) {$ -1 $};
\path (-1.5, 0) node (C) {$ -1 $};
\path (0, -1.5) node (D) {$ -1 $};
\path (0, 0) node (E) {$ +2 $};
\path[draw, ->] ($ (C) + (0:0.4) $) to node[midway, above] {$ e_1 $} ($ (E) + (180:0.4) $);
\path[draw, ->] ($ (B) + (270:0.4) $) to node[midway, right] {$ e_1 $} ($ (E) + (90:0.4) $);
\path[draw, ->] ($ (A) + (180:0.4) $) to node[midway, below] {$ e_2 $} ($ (E) + (0:0.4) $);
\path[draw, ->] ($ (D) + (90:0.4) $) to node[midway, left] {$ e_2 $} ($ (E) + (270:0.4) $);
\path[draw] \foreach \i in {A, B, C, D, E} {(\i) circle[radius=0.3]};
\end{scope}
\begin{scope}[shift={(0, 7)}]
\path (1.5, 0) node (A) {$ -1 $};
\path (0, 1.5) node (B) {$ -1 $};
\path (-1.5, 0) node (C) {$ -1 $};
\path (0, -1.5) node (D) {$ -1 $};
\path (0, 0) node (E) {$ +2 $};
\path[draw, ->] ($ (C) + (0:0.4) $) to node[midway, above] {$ e_1 $} ($ (E) + (180:0.4) $);
\path[draw, ->] ($ (B) + (270:0.4) $) to node[midway, right] {$ e_2 $} ($ (E) + (90:0.4) $);
\path[draw, ->] ($ (A) + (180:0.4) $) to node[midway, below] {$ e_1 $} ($ (E) + (0:0.4) $);
\path[draw, ->] ($ (D) + (90:0.4) $) to node[midway, left] {$ e_2 $} ($ (E) + (270:0.4) $);
\path[draw] \foreach \i in {A, B, C, D, E} {(\i) circle[radius=0.3]};
\end{scope}
\begin{scope}[shift={(5, 0)}]
\path (1.5, 0) node (A) {$ -1 $};
\path (0, 1.5) node (B) {$ -1 $};
\path (-1.5, 0) node (C) {$ -1 $};
\path (0, -1.5) node (D) {$ -1 $};
\path (0, 0) node (E) {$ +2 $};
\path[draw, ->] ($ (C) + (0:0.4) $) to node[midway, above] {$ e_1 $} ($ (E) + (180:0.4) $);
\path[draw, ->] ($ (B) + (270:0.4) $) to node[midway, right] {$ e_2 $} ($ (E) + (90:0.4) $);
\path[draw, ->] ($ (A) + (180:0.4) $) to node[midway, below] {$ e_2 $} ($ (E) + (0:0.4) $);
\path[draw, ->] ($ (D) + (90:0.4) $) to node[midway, left] {$ e_1 $} ($ (E) + (270:0.4) $);
\path[draw] \foreach \i in {A, B, C, D, E} {(\i) circle[radius=0.3]};
\end{scope}
\path[draw, thick] (0, 3.7) coordinate (X) circle[radius=0.7];
\path[fill] (X) ++(180:0.7) circle[radius=0.1];
\path[fill] (X) ++(90:0.7) circle[radius=0.1];
\path[fill] (X) ++(0:0.7) circle[radius=0.1];
\path[draw, ultra thick, ->, bend left=10] (-4, 2) to ($ (X) + (180:0.9) $);
\path[draw, ultra thick, ->, bend right=10] (4, 2) to ($ (X) + (0:0.9) $);
\path[draw, ultra thick, ->, bend right] (0.5, 2) to ($ (X) + (290:0.9) $);
\path[draw, ultra thick, ->, bend right=10] (-0.5, 5.5) to ($ (X) + (90:0.9) $);
\end{tikzpicture}
\caption{This figure depicts the exceptional fiber in the $ D_4 $ case, featuring a 1-parameter family and three singular points. The representations corresponding to the three singular points are drawn, together with the typical representation of the 1-parameter family. Only those arrows with nonzero values are drawn, highlighting that the three singular representations decompose.}
\label{fig:excfiber-partial-D4}
\end{figure}

\section{Comparison of symmetry groups}
\label{sec:app-symmetry}
In this section, we describe the possible symmetries which can be present in the local quiver. In particular, we prove \autoref{th:result-symmetry}. We start with an arbitrary codimension-2 leaf and look at the local quiver together with the roots assigned to all vertices. It is possible that some of these roots are equal, which gives rise to a permutation action among the vertices, a priori not preserving the quiver structure. Simultaneously, we can consider the automorphisms of the local quiver with vertices labeled by the roots, and this gives rise to an a priori smaller group. In this section, we show that the two groups are in fact equal.

To start our investigation, regard the example of a local quiver with roots depicted in \autoref{fig:symmetry-hypothetical}. In this example, two roots appear twice, and we shall therefore define the naive symmetry group of this labeled quiver to be the permutation group $ S_2 $. However, mirroring over the vertical axis does not preserve the other roots, and we shall therefore define the automorphism group of this labeled local quiver to be the trivial group.

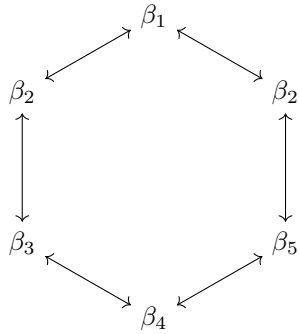
\begin{figure}
\centering
\begin{tikzpicture}
\path (30:2) node (A) {$ β_2 $};
\path (90:2) node (B) {$ β_1 $};
\path (150:2) node (C) {$ β_2 $};
\path (210:2) node (D) {$ β_3 $};
\path (270:2) node (E) {$ β_4 $};
\path (330:2) node (F) {$ β_5 $};
\path[draw, <->] (A) to (B);
\path[draw, <->] (B) to (C);
\path[draw, <->] (C) to (D);
\path[draw, <->] (D) to (E);
\path[draw, <->] (E) to (F);
\path[draw, <->] (F) to (A);
\end{tikzpicture}
\caption{This figure depicts a hypothetical local quiver in which a root appears twice, while the local quiver together with the labels features no symmetry as a whole.}
\label{fig:symmetry-hypothetical}
\end{figure}

\begin{definition}
Let $ (Q, α) $ be a quiver setting with $ α ∈ Σ_{0, 0} $. Let $ L $ be a codimension-2 leaf, given by the roots $ β_1, …, β_k $. Decompose the set of indices into classes $ \{1, …, k\} = I_1 \sqcup … \sqcup I_l $ such that $ β_i = β_j $ if and only if $ i $ and $ j $ are in the same class.
\begin{itemize}
\item The \emph{naive symmetry group} of the labeled local quiver is the product of permutation groups given by
\begin{equation*}
S = \prod_{i = 1}^l S_{I_i} ⊂ S_k.
\end{equation*}
\item The \emph{automorphism group} $ Γ = \Aut(Q', α', (β_1, …, β_k)) $ of the labeled local quiver is the group consisting of those automorphisms of $ (Q', α') $ as directed graph which preserve the roots in the sense that $ β_{σ(i)} = β_i $ for all $ i = 1, …, k $.
\end{itemize}
\end{definition}

Both the naive symmetry group and the automorphism group naturally embed into the permutation group $ S_k $. Indeed, for the naive symmetry group this holds by definition. For the automorphism group, this holds because (apart from the $ A_1 $ case) every pair of vertices is joined by at most one arrow and there are no loops.

Let us regard the case of a local quiver of $ A_1 $ type. This case is special, given that the labeled local quiver has four automorphisms instead of the expected two. However, a symmetry here requires $ (β_1, β_1) = -2 $ while according to \cite[Lemma 7.2]{bellamy-schedler} any imaginary roots appearing twice are necessarily isotropic, in other words $ (β_1, β_1) = 0 $. This shows that the cumbersome $ A_1 $ symmetry cannot appear in the labeled local quiver of any quiver variety.

\begin{lemma}
\label{th:symmetry-agree}
The naive symmetry group and the automorphism group of the labeled local quiver agree: $ S = Γ $.
\end{lemma}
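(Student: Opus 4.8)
The plan is to prove the equality $S = \Gamma$ by establishing the two inclusions separately, viewing both groups inside the symmetric group $S_k$. First I would dispose of the inclusion $\Gamma \subseteq S$, which is immediate from the definitions: any $\sigma \in \Gamma$ is a root-preserving graph automorphism, so $\beta_{\sigma(i)} = \beta_i$ for every $i$, meaning $\sigma$ permutes each class $I_j$ of equal roots internally and therefore lies in $S = \prod_{j=1}^l S_{I_j}$.

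The substance lies in the reverse inclusion $S \subseteq \Gamma$, and here the plan is to exploit that the local quiver is reconstructed purely from the Cartan pairings of the roots. Between distinct vertices $i$ and $j$ the local quiver carries exactly $-(\beta_i, \beta_j)$ arrows, and at vertex $i$ exactly $2p(\beta_i)$ loops. I would take an arbitrary $\sigma \in S$ and observe that $\beta_{\sigma(i)} = \beta_i$ forces $-(\beta_{\sigma(i)}, \beta_{\sigma(j)}) = -(\beta_i, \beta_j)$ as well as $2p(\beta_{\sigma(i)}) = 2p(\beta_i)$, so $\sigma$ carries the arrow set between any pair of vertices bijectively onto the arrow set between their images. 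Choosing such bijections lifts the vertex permutation $\sigma$ to an automorphism of $(Q', \alpha')$ as a directed graph, and this lift preserves the roots by construction; hence $\sigma \in \Gamma$.

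The one point I would treat with care is that $S = \Gamma$ is meant as an equality of subgroups of $S_k$, so I must know that the underlying-vertex map $\Gamma \hookrightarrow S_k$ is injective and the lift constructed above is therefore essentially unique. As recorded just before the lemma, in every Kleinian type other than $A_1$ each pair of distinct vertices is joined by at most one arrow, so a graph automorphism is pinned down by its action on vertices. The hard part, such as it is, is thus only the degenerate $A_1$ situation, where a doubled edge could produce extra arrow-swapping automorphisms; but this case does not arise for genuine quiver varieties, since it would force $(\beta_1, \beta_1) = -2$ in conflict with the isotropy $(\beta_1, \beta_1) = 0$ of a doubly-occurring imaginary root guaranteed by \cite[Lemma 7.2]{bellamy-schedler}. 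I expect no obstacle beyond this bookkeeping, as all the real content is carried by the single observation that the incidences of the local quiver are functions of the roots alone.
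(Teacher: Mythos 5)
Your proposal is correct, but it takes a genuinely different route from the paper's. The paper proves $ S = Γ $ by a case analysis over the Kleinian types $ A_1 $, $ A_2 $, $ A_3 $, $ A_{≥4} $, $ D_4 $, $ D_{≥5} $, $ E_{6,7,8} $: in each type it determines which coincidences among the $ β_i $ are compatible with the rule that the number of arrows between vertices $ i $ and $ j $ is $ -(β_i, β_j) $, and then checks by inspection that the surviving naive symmetries are graph automorphisms. You instead isolate the single fact underlying all of these cases — that the incidence data of the local quiver ($ -(β_i, β_j) $ arrows between distinct vertices, $ 2p(β_i) $ loops) is a function of the roots alone — and conclude directly that every root-preserving vertex permutation preserves arrow multiplicities and hence lifts to a quiver automorphism; combined with the trivial inclusion $ Γ ⊆ S $ and the injectivity of $ Γ → S_k $, which you correctly reduce to excluding the $ A_1 $ coincidence via \cite[Lemma 7.2]{bellamy-schedler} exactly as the paper does, this yields the lemma uniformly. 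This is shorter and arguably cleaner; what the paper's longer route buys is the explicit enumeration of the possible coincidence patterns recorded in \autoref{fig:symmetry-symmetry-possible}, on which \autoref{th:result-symmetry} and the later monodromy analysis depend, and which your argument does not produce. One small point you should make explicit: the lift must also preserve the dimension vector, i.e.\ $ α'_{σ(i)} = α'_i $; this does follow, since $ α' $ is the primitive imaginary root of the extended Dynkin graph and is therefore invariant under any graph automorphism (equivalently, equal roots force equal adjacency patterns and hence equal coordinates of $ δ $), but it is not literally contained in the statement that arrow counts match.
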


\begin{proof}
We split the proof into cases depending on the type of the leaf. For the $ A_1 $ type, we have already seen that $ S $ is trivial and $ Γ $ is trivial as well. For the $ A_2 $ type, the three roots $ β_1 $, $ β_2 $ and $ β_3 $ are necessarily pairwise distinct since their pairings are required to be $ -1 $ while a self-pairing value is an even integer. For the $ A_3 $ type, there are seven possible labelings of the local quiver, but only the two depicted in \autoref{fig:symmetry-symmetry-possible-A3-1} and \autoref{fig:symmetry-symmetry-possible-A3-2} are possible due to the pairings of the roots and they indeed give rise to automorphisms of the labeled local quiver, hence $ S = Γ $. For the $ A_{≥4} $ type, we observe that any two roots appearing twice have the same adjacent roots by construction of the local quiver, and in consequence both $ S $ and $ Γ $ are trivial.

For the $ D_4 $ type, we observe that the root associated with the central vertex is necessarily distinct from the others due to the adjacencies. The outer four roots may be any combination of equal or non-equal roots. Therefore all symmetries again give rise to automorphisms of the labeled local quiver and we have $ S = Γ $.

For the $ D_{≥5} $ type, we recall that the $ (n-3) $-many central vertices are all pairwise distinct real roots. None of them appear as label on an outer vertex, since that outer vertex would then share the adjacencies with the inner vertex and therefore be adjacent to at least two vertices in total. Furthermore, the two pairs of roots $ \{β_1, β_2\} $ and $ \{β_n, β_{n+1}\} $ are disjoint as well due to non-shared adjacencies. Finally, the only possible symmetries are $ β_1 = β_2 $ and $ β_n = β_{n+1} $, both of which give rise to automorphisms of the labeled local quiver.

For the $ E_6 $ type, recall that the roots at the inner four vertices are pairwise distinct. We observe that they are disjoint from the roots at the outer three vertices due to non-shared adjacencies. Moreover, the outer three roots are pairwise distinct due to non-shared adjacencies. In total, both $ S $ and $ Γ $ are trivial. The same arguments hold for the $ E_7 $ and $ E_8 $ type.
\end{proof}

Next, we show that a symmetry in the local quiver induces monodromy of the resolution. We assume that $ (Q, α) $ is a quiver setting with $ α \in \Sigma_{0, 0} $ and that $ L ⊂ \M (Q, α) $ is a codimension-2 leaf with roots $ β_1, …, β_k $. Let $ θ' \in \mathbb{Z}^{Q_0'} $ be a stability parameter symmetric with respect to all symmetries already present among the $ β_i $.

Regard the local quiver $ (Q', α') $ associated with the leaf $ L $. Let $ σ $ be an automorphism of $ (Q', α') $ which keeps the roots $ β_i $ invariant. This amounts to saying that
\begin{align*}
σ: Q'_0 &\isoto Q'_0, \\
σ: \{a ∈ \overline{Q}_1: i → j\} &\isoto \{a ∈ \overline{Q}_1: σ(i) → σ(j)\}, \\
∀i ∈ Q'_0: \quad β_{σ(i)} &= β_i
\end{align*}

Let us denote by $ \tilde{φ} $ the map $ \tilde{φ}: \Rep(Π_{Q'}, α') → \Rep(Π_{Q'}, α') $ which swaps a given representation around according to $ σ $. More precisely, we put $ \tilde{φ}(ρ)(a) = (-1)^{s_a} ρ(σ^{-1} (a)) $ where the sign $ (-1)^{s_a} $ is $ +1 $ in case $ a $ and $ σ^{-1} (a) $ are both starred or both unstarred, and $ (-1)^{s_a} $ is $ -1 $ if one of them is starred and the other is unstarred.

\begin{remark}
\label{rem:symmetry-diag}
The map $ \tilde{φ}: \Rep(Π_{Q'}, α') → \Rep(Π_{Q'}, α') $ descends to the GIT quotient and to the Mumford by any stability parameter. This is automatic by the construction of GIT and we obtain the commutative diagram
\begin{center}
\begin{tikzcd}
\Rep(π_{Q'}, α') \arrow[r, "\tilde{φ}"] \arrow[d] & \Rep(π_{Q'}, α') \arrow[d] \\
\M_{θ'} (Q', α') \arrow[r, "φ_{θ'}"] \arrow[d] & \M_{θ'} (Q', α') \arrow[d] \\
\M (Q', α') \arrow[r, "φ"] & \M (Q', α')
\end{tikzcd}
\end{center}
\end{remark}

\begin{lemma}
\label{th:app-symmetry-lifting}
Let $ (Q', α') $ be any Kleinian quiver setting and let $ σ $ be an automorphism of $ (Q', α') $. Let $ θ' ∈ ℤ^{Q'_0} $ be any $ σ $-symmetric parameter. Then the map $ φ_{θ'} $ acts by actually flipping a representation according to $ σ $, even on the exceptional fiber.
\end{lemma}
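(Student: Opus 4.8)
The plan is to read the pointwise description straight off the commutative diagram of \autoref{rem:symmetry-diag}, combined with surjectivity of the Mumford quotient map. Write $ q $ for the good quotient map from the $ θ' $-semistable locus inside $ \Rep(Π_{Q'}, α') $ onto $ \M_{θ'} (Q', α') $. The assertion to be proved is that for every point $ p ∈ \M_{θ'} (Q', α') $ — including every point of the exceptional fiber $ E = π_{θ'}^{-1} (0) $ — if $ ρ $ is any $ θ' $-polystable representative of $ p $, then $ φ_{θ'} (p) $ is represented by the flipped representation $ \tilde{φ} (ρ) $.

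First I would recall the three features of $ \tilde{φ} $ that legitimise the descent and pin down its effect. The sign convention $ (-1)^{s_a} $ is precisely what makes $ \tilde{φ} $ preserve the moment-map relation, so that it is genuinely a self-map of $ \Rep(Π_{Q'}, α') $. Next, $ \tilde{φ} $ is equivariant for the $ \GL_{α'} $-action up to the automorphism of $ \GL_{α'} = \prod_{i ∈ Q'_0} \GL_i (ℂ) $ that permutes the factors by $ σ $; since this automorphism carries closed orbits to closed orbits and $ \tilde{φ} $ is an isomorphism of varieties, $ \tilde{φ} $ sends $ \GL_{α'} $-orbits to $ \GL_{α'} $-orbits and closed orbits to closed orbits. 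Finally, because $ θ' $ is $ σ $-symmetric and $ \tilde{φ} $ permutes the dimension components according to $ σ $, the map $ \tilde{φ} $ preserves $ θ' $-semistability, $ θ' $-stability and $ θ' $-polystability and respects $ S $-equivalence. Together these three points are exactly what underlies the commutativity $ φ_{θ'} ∘ q = q ∘ \tilde{φ} $ recorded in \autoref{rem:symmetry-diag}.

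The second step is to invoke surjectivity of $ q $. Every point $ p ∈ \M_{θ'} (Q', α') $ is $ q(ρ) $ for some $ θ' $-polystable $ ρ $, namely the unique closed orbit in its semistable fibre. The commutative square then gives $ φ_{θ'} (p) = φ_{θ'} (q(ρ)) = q(\tilde{φ} (ρ)) $, so $ φ_{θ'} (p) $ is represented by $ \tilde{φ} (ρ) $, which is the claim. For the exceptional fiber specifically, note that $ \tilde{φ} $ fixes the all-zero representation, so the induced involution $ φ $ on the affine quotient fixes the cone point $ 0 $; hence $ φ_{θ'} $ preserves $ E = π_{θ'}^{-1} (0) $ and acts there by flipping representations as well.

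The main obstacle worth flagging is the behaviour over the singular point $ 0 $. When $ θ' $ is not generic, points of $ E $ may be strictly $ θ' $-polystable rather than $ θ' $-stable, so there is no orbit-to-point dictionary and one must argue at the level of $ S $-equivalence. This is handled by the fact that $ \tilde{φ} $ carries closed orbits to closed orbits and therefore preserves semisimplifications; consequently the explicit flip is compatible with $ S $-equivalence, and the flip description survives over $ 0 $. Everything else is a formal consequence of the commutative diagram of \autoref{rem:symmetry-diag} together with surjectivity of the quotient map.
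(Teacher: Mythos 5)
Your argument is correct and takes essentially the same route as the paper's own proof: both read the claim off the commutative diagram of \autoref{rem:symmetry-diag} together with surjectivity of the quotient map onto $ \M_{θ'} (Q', α') $. You merely spell out the GIT details (equivariance of $ \tilde{φ} $ up to the $ σ $-permutation of the factors of $ \GL_{α'} $, preservation of the preprojective relation via the signs, and compatibility with $ θ' $-stability and $ S $-equivalence) that the paper's one-line proof leaves implicit as ``automatic by the construction of GIT.''
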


\begin{proof}
Let us regard the flipping map $ \Rep(π_{Q'}, α') \to \Rep(π_{Q'}, α') $. As we can see from the upper commutative square in \autoref{rem:symmetry-diag}, the map $ φ_{θ'} $ is the map induced from flipping the representation. At the same time, since the lower square commutes, the map $ φ_{θ'} $ is necessarily the desired lift $ \tilde{φ} $. This proves the claim.
\end{proof}

\section{Description of the monodromy}
\label{sec:app-monodromy}
In this section, we describe the monodromy of the bundle $ π: π^{-1} (L) → L $ for the symplectic codimension-2 leaves $ L $ in case $ α ∈ Σ_{0, 0} $. In particular, we prove \autoref{th:monodromy-fiberbundle}. The idea is that our functor $ F $ from \autoref{sec:main-functor} produces representations which lie above the leaf. As we walk around a closed path in the leaf, we may have the chance to swap the roles of two simples if a symmetry from \autoref{sec:symmetry} is present. We return to a point above the leaf which we can again describe with the help of the functor. This time, the role of the two simples has swapped, which is equivalent to swapping the building plan of the representations. It is precisely the action of the symmetry on the Kleinian exceptional fiber, which we have described in \autoref{th:main-excfiber-action} and elaborated in \autoref{sec:app-excfiber}. Combining these steps, we arrive at an explicit description of the monodromy.

We start with a quiver setting $ (Q, α) $ with $ α ∈ Σ_{0, 0} $ such that $ α $ is indivisible or $ (\gcd(α), p(\gcd(α)^{-1} α)) = (2, 2) $. Let $ L $ be a codimension-2 leaf with labeled local quiver $ (Q', α', (β_1, …, β_k)) $. We perform a case distinction, depending on whether $ α $ is indivisible or not, and depending on the symmetry present in the local quiver. Recall that “generic” and “pseudo-generic” refers to the precise notions listed in \autoref{def:prelim-generic}.

\addtocontents{toc}{\SkipTocEntry}
\subsection*{The case of indivisible $ α $}
By Namikawa's description, we are supposed to pick a symplectic resolution $ π: Y → \M(Q, α) $ and read off the monodromy of the fiber bundle $ π: π^{-1} (L) → L $ over every codimension-2 leaf $ L $. In the case that $ α $ is indivisible, a symplectic resolution is given by $ π: \M_θ (Q, α) → \M(Q, α) $ where $ θ $ is a generic stability parameter. The monodromy of this fiber bundle $ π: π^{-1} (L) → L $ is completely well-defined and is precisely Namikawa's monodromy.

\addtocontents{toc}{\SkipTocEntry}
\subsection*{The case $ (\gcd(α), p(\gcd(α)^{-1} α)) = (2, 2) $}
We claim that in this case the partial resolution $ \M_θ (Q, α) $ with $ θ $ pseudo-generic is already smooth enough to read off the desired monodromy. Let us elaborate this principle in detail. Let $ θ $ be a pseudo-generic stability parameter and $ L $ be a codimension-2 leaf. Then the local quiver $ (Q', α', θ') $ at $ L $ can only have one of the symmetries described in \autoref{fig:symmetry-symmetry-possible}. In the two cases treated in \autoref{sec:app-excfiber-partial}, the variety $ \M_{θ'} (Q', α') $ is only a partial resolution of $ \M(Q', α') $ and the exceptional fiber of the variety $ \M_θ (Q, α) $ above any point in $ L $ consists of a single projective line with two or three singular points, respectively. As we walk around the leaf, the arrangement of this projective line with three singular points is preserved, and we obtain a well-defined action of the fundamental group $ π_1 (L) $ on the components and special points of the exceptional fiber in $ \M_{θ'} (Q', α') $.

In conclusion, in both the indivisible case and the (2, 2) case we have a well-defined notion of “naive monodromy” of $ \M_θ (Q, α) $ over any codimension-2 leaf. It is given by tracking the projective lines and potentially the two or three singular points. The monodromy is by Dynkin automorphism, once an identification between the Dynkin diagram and the projective lines and potentially singular points in the exceptional fiber has been chosen. In \autoref{th:app-monodromy-agree}, we explain that this naive monodromy agrees with Namikawa's monodromy.

\begin{lemma}
\label{th:app-monodromy-agree}
Let $ (Q, α) $ be a quiver setting with $ α ∈ Σ_{0, 0} $ such that $ α $ is indivisible or $ (\gcd(α), p(\gcd(α)^{-1} α)) = (2, 2) $. Let $ θ $ be a pseudo-generic stability parameter and let $ L $ be a codimension-2 leaf of $ \M(Q, α) $. Then the naive monodromy of $ \M_θ (Q, α) → \M(Q, α) $ over $ L $ agrees with Namikawa's monodromy.
\end{lemma}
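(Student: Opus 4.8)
The plan is to handle the two hypotheses on $ α $ separately. If $ α $ is indivisible, then $ \M_θ (Q, α) $ with $ θ $ generic is already a projective symplectic resolution of $ \M(Q, α) $, so the naive monodromy --- which in this case merely tracks the exceptional projective lines over $ L $ --- is by definition Namikawa's monodromy, and there is nothing to prove. The entire content of the lemma therefore lies in the case $ (\gcd(α), p(\gcd(α)^{-1} α)) = (2, 2) $, where $ \M_θ (Q, α) $ is only a partial resolution.

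For the (2, 2) case, the strategy is to realise Namikawa's monodromy through one particular genuine resolution, namely the map $ b: Y → \M_θ (Q, α) $ obtained by blowing up the remaining singularities, and then to match the component-monodromy of $ Y $ with the data recorded by the naive monodromy. That we may use this specific $ Y $ is justified by Namikawa's independence-of-resolution principle recalled in \autoref{sec:prelim}: the monodromy of a loop in $ L $ is intrinsic to the transversal Kleinian singularity and does not depend on the chosen symplectic resolution. I would first recall from \autoref{sec:app-excfiber-partial} the explicit shape of the exceptional fibre of $ \M_θ (Q, α) $ over a point $ x ∈ L $, namely a single projective line together with two singular points in the $ A_3 $ case and three singular points in the $ D_4 $ case, along with the canonical bijection between the nodes of the Dynkin diagram and these geometric features: the visible line corresponds to the central node, and each singular point corresponds to one outer node.

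The core step is then to verify that the naive monodromy coincides with the permutation that $ π_1 (L) $ induces on the exceptional components of $ Y $. Since the partial resolution $ \M_θ (Q, α) $, its smooth exceptional components, and its singular locus are all globally defined over $ L $, the monodromy preserves the partition of the Dynkin nodes into ``visible'' lines and ``hidden'' singular points. Because $ b $ is a canonical blowup, it is equivariant for the monodromy action over $ L $, and because each remaining singular point is of the mildest type --- blowing up to a single exceptional curve carrying a single node, as read off in \autoref{sec:app-excfiber-partial} --- the permutation of the components of $ Y $ splits as the permutation of the visible lines together with the permutation of the singular points. This is precisely the data that the naive monodromy records, so the two permutations agree; and by independence of resolution both equal Namikawa's monodromy, consistent with \autoref{th:main-excfiber-action}.

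I expect the main obstacle to be making the monodromy-equivariance of the blowup $ b $ precise: one must show that the family of remaining singularities forms a subbundle over $ L $ and that resolving it fibrewise produces a bundle map, so that tracing the singular points in $ \M_θ (Q, α) $ genuinely corresponds to tracing the contracted components in $ Y $. Together with the one-to-one correspondence between singular points and contracted nodes, both of these facts rest on the explicit description of the remaining singularities in \autoref{sec:app-excfiber-partial}; once it is in hand, the remaining bookkeeping is formal.
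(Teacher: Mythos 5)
Your proposal is correct and follows essentially the same route as the paper's proof: realize Namikawa's monodromy via the resolution obtained by blowing up the remaining singular locus of $ \M_θ (Q, α) $, observe that each singular point inflates to a single exceptional line, and conclude that the singular points are interchanged along a loop in $ L $ if and only if the corresponding lines upstairs are. The only difference is that you spell out the equivariance of the blowup and the indivisible case explicitly, where the paper dismisses these as obvious or leaves them implicit.
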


\begin{proof}
Namikawa's monodromy is the monodromy which we obtain from an actual symplectic resolution. A symplectic resolution $ \widetilde{M_θ (Q, α)} $ is given by further blowing up along the remaining singular locus, which inflates the two or three singular points per $ x ∈ L $ to further projective lines. It is obvious that these points in $ \M_θ (Q, α) $ are interchanged when we move around a loop $ γ: [0, 1] → L $ if and only if the projective lines in $ \widetilde{M_θ (Q, α)} $ are interchanged along the loop $ γ $. In other words, the naive monodromy which we read off from $ \M_θ (Q, α) $ agrees with Namikawa's monodromy.
\end{proof}

The second core ingredient in our identification of the monodromy is the functor $ F $ from \autoref{sec:functor}. Let $ S_1, …, S_k ∈ \Mod_{Π_Q} $ be a collection of simples and $ \tilde S_1, …, \tilde S_k ∈ \Mod_{Π_{Q'}} $ be the vertex simples of the Kleinian local quiver $ (Q', α') $. Then we have the functor
\begin{equation*}
F_{S_1, …, S_k}: \Tw\{\tilde S_1, …, \tilde S_k\} \to \Tw\{S_1, …, S_k\}.
\end{equation*}
We shall now bundle these functors $ F $ together into a single function whose input is a collection $ (S_1, …, S_k) $. We start with a codimension-2 leaf $ L $, given by the isotropic decomposition $ α = m_1 β_1 + … + m_k β_k $. Recall that the representations in $ L $ are precisely those which decompose as $ S_1^{m_1}  ⊕ … ⊕ S_k^{m_k} $ with $ S_i ∈ \M^{\simp} (Q, β_i) $. We write
\begin{equation*}
M ≔ \big(\M^{\simp} (Q, β_1) × … × \M^{\simp} (Q, β_k)\big) \setminus Δ.
\end{equation*}
Here $ Δ ⊂ \prod_{i = 1}^k \M^{\simp} (Q, β_i) $ denotes the subset of tuples where at least two representations are isomorphic. Note that the space $ M $ is not exactly the same as the leaf $ L $. Instead, we have the natural action of $ Γ = \Aut(β_1, …, β_k) $ on $ M $, and the quotient $ M / Γ $ is precisely the leaf $ L $.

Let now $ θ ∈ ℤ^{Q_0} $ be a pseudo-generic stability parameter and $ θ' ∈ ℤ^{Q'_0} $ its localized version. Denote by $ E $ the exceptional fiber of $ π: \M_{θ'} (Q', α') \to \M (Q', α') $. Then we can interpret the family of functors as a continuous function
\begin{equation*}
F: M \times E \to \M_θ (Q, α).
\end{equation*}

\begin{remark}
\label{rem:app-monodromy-invariant}
The group $ Γ $ acts naturally on $ M $ and acts on $ E $ by the lift of the action on $ \M(Q, α) $ thanks to \autoref{th:app-symmetry-lifting}. This way, $ Γ $ also acts on the product $ M × E $ and we notice that the function $ F: M × E → \M_θ (Q, α) $ is in fact $ Γ $-invariant. This can be explained easily as follows: An element $ m ∈ M $ determines the building plan of a representation, and an element $ e ∈ E $ determines the building substance of the representation. When we permute the building plan and simultaneously permute the substance, we obtain the same representation. This is analogous to the fact that when you rotate the building plan of a house and you simultaneously permute the bricks, you obtain the same house. We conclude that $ F $ is $ Γ $ invariant and therefore descends to a map $ F: (M × E) / Γ → \M_θ (Q, α) $.
\end{remark}

\begin{remark}
In the statement of \autoref{th:monodromy-fiberbundle} we make use of the canonical map $ π_1 (L) → Γ $. To describe this map, let $ γ: [0, 1] → L $ be a closed path and take a lift $ \tilde{γ}: [0, 1] → M $. Then determine the $ g ∈ Γ $ such that $ \tilde{γ}(1) = g \tilde{γ}(0) $. Simply speaking, the map $ π_1 (L) → Γ $ forgets the entire trajectory of the path, and remembers only which simples are swapped along the way.
\end{remark}

We are now ready to prove \autoref{th:monodromy-fiberbundle}, which claims that $ F: (M × E) / Γ → π^{-1} (L) $ is an isomorphism. This identifies the fiber bundle $ π: π^{-1} (L) → L $ and tells us that the desired monodromy is nothing else than the action of $ Γ $ on $ E $.

\begin{proof}[Proof of \autoref{th:monodromy-fiberbundle}]
Let us explain all three statements after each other. Regard the first statement. The projection $ M × E → M $ is a trivial fiber bundle with fiber $ E $. The action by the finite group $ Γ $ is free, therefore its quotient is still a fiber bundle with fiber $ E $.

Regard the second statement. Let us observe that $ F $ indeed maps to $ π^{-1} (L) $. Indeed, let $ m = (S_1, …, S_k) ∈ M $ and let $ e ∈ E $. Then $ F(m, e) $ is a $ θ $-polystable representation by \autoref{th:main-functor-stability} which semisimplifies to $ S_1^{m_1} ⊕ … ⊕ S_k^{m_k} $. Therefore we have $ F(m, e) ∈ π^{-1} (L) $. Moreover the diagram is commutative as claimed.

Let us explain that the map $ F: (M × E) / Γ → π^{-1} (L) $ is an isomorphism. It suffices to check the bijectivity fiberwise. Recall that the functor $ F_{S_1, …, S_k}: \Tw\{\tilde S_1, …, \tilde S_k\} → \Tw\{S_1, …, S_k\} $ is a strict $ A_∞ $-equivalence. This makes $ F: E → π^{-1} ([S_1, …, S_k]) $ injective. Since any $ θ $-polystable representation in $ π^{-1} ([S_1, …, S_k]) $ lies in $ \Tw\{S_1, …, S_k\} $, the map $ F: E → π^{-1} ([S_1, …, S_k]) $ is also surjective. This proves fiberwise bijectivity and finishes the second statement.

Regard the third statement. Let $ γ: [0, 1] → L $ be a closed path and $ \tilde{γ}: [0, 1] → L $ is its lift to $ L $ with $ \tilde{γ} (1) = g \tilde{γ} (0) $. We need to show that the monodromy action of $ γ $ on $ π^{-1} (γ(0)) $ is equal to the action of $ g $ on $ E $. Thanks to the second statement, this is immediate, but we spell out the instructive detail. Write $ γ(0) = F(\tilde{γ} (0), e) $ with $ m ∈ M $ and $ e ∈ E $. Then the lift of $ γ $ to the exceptional fiber $ π^{-1} (L) $ can be described explicitly by $ ρ(t) = F(\tilde{γ} (t), e) $. In particular, we have
\begin{equation*}
ρ(1) = F(\tilde{γ} (1), e) = F(g \tilde{γ} (0), e) = F(\tilde{γ} (0), g^{-1} e).
\end{equation*}
In other words, while $ ρ(0) $ lies in the projective line or singular point given by $ e $, the representation $ ρ(1) $ lies in the projective line or singular point given by $ g^{-1} e $. This proves the third statement and finishes the proof.
\end{proof}

\section{Products and symmetric products}
\label{sec:app-products}
In the present section, we show how to determine the Namikawa-Weyl groups of symmetric and direct products. Together with the analysis of the Namikawa-Weyl groups of the individual quiver varieties $ \M(Q, α_i) $ in the previous section, this lets us determine the Namikawa-Weyl groups of arbitrary quiver varieties that have a symplectic resolution. For brevity, we denote the Namikawa-Weyl group of $ X $ by $ W(X) $.

\begin{lemma}
Let $ X_1, …, X_k $ be symplectic singularities with good $ ℂ^* $-action. Then $ W(X_1 × … × W_k) = W(X_1) × … × W(X_k) $.
\end{lemma}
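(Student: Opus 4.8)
The plan is to reduce everything to Namikawa's product formula $W = \prod_{B} \tilde W_B$ over codimension-2 symplectic strata (\autoref{th:namikawa-weyl-explicit}) and then match up the strata of the product $X \coloneqq X_1 \times \dots \times X_k$ with those of the factors. First I would equip $X$ with a good $\mathbb{C}^\times$-action: writing $\omega_i$ for the form on $X_i$, positively weighted of degree $l_i$, I take the rescaled diagonal action $t.(x_1,\dots,x_k) = (t^{a_i} x_i)_i$ with $a_i = \operatorname{lcm}(l_1,\dots,l_k)/l_i$, so that the product form $\omega = \omega_1 \oplus \dots \oplus \omega_k$ becomes positively weighted. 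Positivity of weights passes to $\mathbb{C}[X] = \bigotimes_i \mathbb{C}[X_i]$ because rescaling the grading by a positive integer keeps all weights nonnegative and the degree-zero part equal to $\mathbb{C}$. Since $W(X)$ is independent of the chosen good action, this suffices.

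Next I would describe the stratification of $X$. The symplectic leaves of a product Poisson variety are products of symplectic leaves of the factors, and codimensions add; moreover every stratum of $X_i$ has even codimension, since its transversal slice is an even-dimensional symplectic variety. Hence a stratum $B_1 \times \dots \times B_k$ of $X$ has codimension $2$ exactly when one factor $B_i$ is a codimension-2 stratum of $X_i$ and each other $B_j = X_j^{\reg}$ is the open dense leaf. This yields a bijection between the codimension-2 strata of $X$ and $\bigsqcup_{i=1}^k \mathcal{B}(X_i)$, sending $B \in \mathcal{B}(X_i)$ to $B' \coloneqq X_1^{\reg} \times \dots \times B \times \dots \times X_k^{\reg}$.

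Then I would identify the local invariants attached to $B'$. At a point $x = (x_1,\dots,x_k) \in B'$ the germ decomposes as $\prod_j (X_j, x_j)$, and the factors with $x_j \in X_j^{\reg}$ are smooth symplectic; therefore the transversal Kleinian singularity of $B'$ equals that of $B$, and $W_{B'} = W_B$. For the monodromy I would use that a product $Y = \prod_i Y_i \to X$ of symplectic resolutions is again a symplectic resolution, and that each $\pi_i : Y_i \to X_i$ restricts to an isomorphism over $X_i^{\reg}$. Restricting over $B'$ then gives $\pi^{-1}(B') \cong X_1^{\reg} \times \dots \times \pi_i^{-1}(B) \times \dots \times X_k^{\reg}$, with the bundle map being the projections onto the $X_j^{\reg}$ for $j \neq i$ and $\pi_i : \pi_i^{-1}(B) \to B$ on the $i$-th factor. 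Thus the $\Delta$-fiber bundle $\pi^{-1}(B') \to B'$ is the pullback of $\pi_i^{-1}(B) \to B$ along the projection $B' \to B$, so its monodromy factors as $\pi_1(B') \twoheadrightarrow \pi_1(B) \to \Aut(\Delta)$ and has the same image. Consequently the associated Dynkin automorphism, and with it the invariant subgroup, satisfies $\tilde W_{B'} = \tilde W_B$.

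Assembling the pieces via \autoref{th:namikawa-weyl-explicit} gives $W(X) = \prod_{B'} \tilde W_{B'} = \prod_{i=1}^k \prod_{B \in \mathcal{B}(X_i)} \tilde W_B = \prod_{i=1}^k W(X_i)$, as desired. The main obstacle is the monodromy comparison: one must make precise that the transversal Kleinian family over $B'$ is genuinely pulled back from $B$ and carries no extra twisting from the smooth factors $X_j^{\reg}$. Exhibiting the explicit product resolution reduces this to the elementary fact that a pulled-back fiber bundle has monodromy factoring through the base projection, which avoids any delicate argument with Kaledin's abstract transversal slices.
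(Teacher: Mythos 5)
Your proposal is correct and follows essentially the same route as the paper: identify the codimension-2 strata of the product as $X_1^{\reg} \times \dots \times B \times \dots \times X_k^{\reg}$, take the product of symplectic resolutions, and observe that the resulting bundle over such a stratum is pulled back from $\pi_i^{-1}(B) \to B$, so the monodromy image and hence $\tilde W_{B'}$ is unchanged. You additionally verify that the product carries a good $\mathbb{C}^\times$-action by rescaling the weights so the symplectic forms match degrees, a hypothesis the paper's proof silently assumes; this is a worthwhile detail but does not change the argument.
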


\begin{proof}
Choose symplectic resolutions $ π_i: Y_i → X_i $. Then the codimension-2 leaves of $ X_1 × … × X_k $ are given by $ X_1^{\reg} × … × L × … × X_k^{\reg} $ where the $ L $ is at the $ i $-th index and $ L $ runs through the set of codimension-2 leaves of $ X_i $. A symplectic resolution $ π $ of the product variety is given by the product of the individual resolutions $ π_i $. We recall that all individual resolutions $ π_i $ are trivial fiber bundles over their smooth locus. To read off the monodromy, we regard the fiber bundle
\begin{equation*}
π: X_1^{\reg} × … × π_1^{-1} (L) × … × X_k^{\reg} → X_1^{\reg} × … × L × … × X_k^{\reg}.
\end{equation*}
It is the direct product of the trivial bundles $ π_i: X_i^{\reg} → X_i^{\reg} $ and of $ π_i: π_i^{-1} (L) → L $, and therefore has the same monodromy as $ L $. This proves the claim.
\end{proof}

In the lemma below we perform a similar analysis for the symmetric products. It is slightly trickier than the case of direct products, but still relies only on classical observations.

\begin{lemma}
Let $ X $ be a symplectic singularity with good $ ℂ^* $-action. Then $ W(S^n X) = W(X) $.
\end{lemma}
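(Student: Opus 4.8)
The plan is to compute $W(S^n X)$ through Namikawa's leaf-by-leaf formula from \autoref{th:namikawa-weyl-explicit}, namely $W(S^n X) = \prod_B \tilde W_B$ over the codimension-2 symplectic strata $B$ of $S^n X$, by classifying those strata and matching each to a codimension-2 stratum of $X$. First I would exploit the analytic-local product structure of the symmetric product: near a point of $S^n X$ represented by an unordered tuple with distinct support $p_1, \dots, p_m$ of multiplicities $a_1, \dots, a_m$, the germ factorizes as $\prod_{j=1}^m S^{a_j}(X, p_j)$, where $(X, p_j)$ is the germ of $X$ at $p_j$. Combining this with the transversal-slice description of $X$ at each $p_j$ (a smooth symplectic factor times the transversal singularity $Z_{p_j}$) reduces the local geometry of $S^n X$ entirely to symmetric powers of the local models of $X$, so that its symplectic stratification can be read off combinatorially from the collision pattern together with the strata containing the $p_j$.

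Next I would enumerate the codimension-2 strata. Away from the collision diagonals, a stratum of $S^n X$ of codimension $2$ must place exactly one of the $n$ points on a codimension-2 stratum $L$ of $X$ while the remaining $n-1$ points stay on distinct points of $X^{\reg}$; its transversal slice is then precisely the transversal Kleinian singularity of $L$, of the same type. This produces, for every codimension-2 leaf $L$ of $X$, a single codimension-2 leaf $B_L$ of $S^n X$ with the same transversal singularity. It then remains to check that the Dynkin automorphism (monodromy) attached to $B_L$ equals the one attached to $L$. A loop in $B_L$ projects to a motion of the distinguished point inside $L$ together with a braiding of the $n-1$ auxiliary regular points; the key point is that the auxiliary braiding acts trivially on the transversal slice, since that slice is determined solely by an analytic neighborhood of the distinguished point, which is disjoint from the auxiliary points. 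Hence $\tilde W_{B_L} = \tilde W_L$, and the totality of these leaves contributes exactly $W(X) = \prod_L \tilde W_L$.

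The main obstacle is the collision diagonal, where two or more of the points come together: one must show it produces no additional codimension-2 stratum. When two regular points collide, the étale-local model is $S^2(X, p) \cong \mathbb{C}^{\dim X} \times \big(\mathbb{C}^{\dim X}/\{\pm 1\}\big)$, whose singular locus has codimension $\dim X$; thus for $\dim X > 2$ the diagonal is of codimension greater than $2$ and contributes nothing, and the same codimension bookkeeping rules out higher collisions and collisions meeting the singular strata of $X$. I expect this codimension accounting—carried out uniformly across all collision patterns by combining the local factorization $\prod_j S^{a_j}(X,p_j)$ with the transversal-slice dimensions—to be the delicate part, since it is exactly here that one must confirm no unexpected Kleinian slice of codimension $2$ appears. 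Once the enumeration is seen to be exhausted by the leaves $B_L$, assembling the contributions via \autoref{th:namikawa-weyl-explicit} yields $W(S^n X) = \prod_L \tilde W_L = W(X)$, as claimed.
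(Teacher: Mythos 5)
Your overall route---enumerate the codimension-2 strata of $S^n X$ via the local product structure of the symmetric product, match each non-collision stratum to a codimension-2 leaf of $X$ with the same transversal Kleinian type and the same monodromy, and dispose of the collision strata by codimension counting---is essentially the paper's, and the non-collision half of your argument (including the observation that braiding the auxiliary regular points acts trivially on the transversal slice) agrees with the paper's product-resolution reasoning.

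The gap is the case $\dim X = 2$, which your codimension bookkeeping explicitly excludes: you rule out the diagonal only ``for $\dim X > 2$''. When $X$ is two-dimensional (a Kleinian singularity, which the hypotheses allow, and which actually occurs in this paper since the lemma is applied to the factors $\M(Q,\alpha_i)$ of the canonical decomposition), the pairwise collision locus $\{x_i = x_j\}$ with both points in $X^{\reg}$ is a genuine codimension-2 stratum of $S^n X$: its transversal model is $(\mathbb{C}^2\times\mathbb{C}^2)/C_2 \cong \mathbb{C}^2\times(\mathbb{C}^2/\pm 1)$, an $A_1$ Kleinian slice. By \autoref{th:namikawa-weyl-explicit} such a stratum a priori contributes a factor $\tilde W_B \subseteq W_{A_1} = \mathbb{Z}/2$ to $W(S^n X)$, so without an argument addressing it the asserted equality $W(S^n X) = W(X)$ is not established---this is precisely the delicate point, not the higher-dimensional collisions. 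The paper splits the proof into $\dim X = 2$ and $\dim X \geq 4$ for exactly this reason and argues that these $A_1$-type collision leaves give no additional contribution; your write-up needs the analogous step, and should say explicitly why the $A_1$ slice arising from the $C_2$-quotient does not enlarge the Namikawa--Weyl group, since this is the one place where the statement could fail.
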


\begin{proof}
We differentiate the case whether $ X $ has dimension $ 2 $ or dimension at least $ 4 $.

Let us start with the case that $ X $ has dimension $ 2 $. Then $ X $ is in fact a Kleinian singularity. The variety $ S^n X $ has two types of codimension-2 leaves. The first is the leaf $ L $ which contains the points $ (x_1, …, x_n) ∈ S^n X $ where $ x_1 = 0 $ and $ x_2, …, x_n ∈ X^{\reg} $ are pairwise distinct. A resolution of a neighborhood of such a leaf is given by the product $ \tilde{X} × L → X × L $. In particular, the monodromy is the same as the monodromy of the codimension-2 leaf $ L ⊂ X $.

The second type of codimension-2 leaves can be described as follows. Let $ 1 ≤ i < j ≤ n $, then the leaf $ L_{ij} $ consists of points $ (x_1, …, x_n) ∈ S^n $ where $ x_i = x_j $ and all other entries are pairwise distinct and lie in the smooth locus of $ X $. Locally around this leaf, the singularity looks like $ (ℂ^2 × ℂ^2) / C_2 $, which is isomorphic to $ ℂ^2 × (ℂ^2 / C_2) $ and thus of $ A_1 $ type. This gives no additional monodromy.

Let us now treat the case that $ X $ has dimension $ ≥ 4 $. Similar to the case of dimension $ 2 $, every codimension-2 leaf of $ X $ now reappears as codimension-2 leaf for $ S^n X $, with the same monodromy. The second type of leaves does not appear, given that any connected component of the open locus already has dimension $ 4 $.

Finally, we conclude that $ S^n X $ has the same Namikawa-Weyl group as $ X $ in both cases. This finishes the proof.
\end{proof}

\printbibliography

\bigskip
\begin{tabular}{@{}l@{}}%
    \textsc{Department of mathematics, University of California, Berkeley, USA}\\
    \textit{jasper.kreeke@berkeley.edu}
  \end{tabular}

\end{document}